\documentclass{article}

\usepackage[utf8]{inputenc}
\usepackage{tikz}
\usepackage[T1]{fontenc}
\usepackage{dsfont}
\usepackage[a4paper, margin=2.7cm]{geometry}
\usepackage{amsmath, amsthm, amsfonts, amssymb}
\usepackage{mathrsfs}        
\usepackage{enumerate}
\usepackage{esint}
\usepackage{caption}
\usepackage{dsfont}
\usepackage{xcolor}
\usepackage[colorlinks=true,linkcolor=blue,citecolor=blue,urlcolor=blue,breaklinks]{hyperref}
\usepackage{bbm} 
\usepackage{thmtools}
\usepackage{natbib}
\usepackage{mathtools}
\usepackage{url}

\newcommand{\abs}[1]{\left\vert#1\right\vert}           
\newcommand{\ap}[1]{\left\langle#1\right\rangle}        
\newcommand{\fl}[1]{\left\lfloor#1\right\rfloor}        
\newcommand{\cl}[1]{\left\lceil#1\right\rceil}          
\newcommand{\norm}[1]{\left\Vert#1\right\Vert}          
    
\renewcommand{\div}{\operatorname{div}}       

\DeclareMathOperator{\pv}{p.v.}

\def\d{\,\mathrm{d}}

\def \ddt{\frac{\mathrm{d}}{\mathrm{d}t}}

\def\p{\partial}

\def\N{\mathbb{N}}
\def\R{\mathbb{R}}

\def\E{\mathbb{E}}

\def\M{\mathcal{M}}

\def\Y{\mathcal{Y}}
\def\X{\mathcal{X}}
\def\Var{{\textrm{Var}}\,}

\def\Id{{\textrm{Id}}\,}

\def\ird{\int_{\R^d}}
\def\d{\,\mathrm{d}}

\def\e{\varepsilon}
\def\f{\varphi}

\def\d{\,\mathrm{d}}

\def \ddt{\frac{\mathrm{d}}{\mathrm{d}t}}
\def \ddt{\frac{\mathrm{d}}{\mathrm{d}t}}

\def\p{\partial}

\def\:{\colon}                        

\def\Jn{J_\varepsilon^{t_1,\dots,t_n}}

\def\JJi{\hat{f}\Big(\frac{\xi\sigma_j}{n^{1/2s}\bar\sigma}\Big)}
\def\GGi{\hat{G}^s\Big(\frac{\xi\sigma_j}{n^{1/2s}\bar\sigma}\Big)}
\def\JJj{\hat{f}\Big(\frac{\xi\sigma_k}{n^{1/2s}\bar\sigma}\Big)}
\def\GGj{\hat{G}^s\Big(\frac{\xi\sigma_k}{n^{1/2s}\bar\sigma}\Big)}

\def\Jnorm{\mathfrak{f}_n}
\def\Jnormh{\hat{\mathfrak{f}_n}}

\newtheorem{teorema}{Theorem}[section]
\newtheorem*{teorema*}{Theorem}
\newtheorem{lemma}[teorema]{Lemma}
\newtheorem{proposizione}[teorema]{Proposition}

\theoremstyle{definition}
\newtheorem{definizione}[teorema]{Definition}
\theoremstyle{remark}
\newtheorem{osservazione}[teorema]{Remark}

\theoremstyle{plain}
\newtheorem{hyp}{Hypothesis}

\theoremstyle{definition}

\theoremstyle{remark}

\theoremstyle{remark}

\def\e{\varepsilon}

\def\aux{{L^1_k}}
\def\esp{\theta\delta}

\title{Time Asymptotics and Scaling Limits for a Nonlocal Fokker-Planck Equation with Heavy-Tailed Kernel}
\author{Niccolò Tassi\footnote{Departamento de Matem\'atica Aplicada, Universidad de Granada, Avenida de Fuentenueva S/N, 18071 Granada, Spain. \textit{E-mail address}: \texttt{\href{mailto:tassi@ugr.es}{tassi@ugr.es}}}}
\date{}

\AtEndDocument{\vspace{2cm}{\footnotesize
		\noindent
		\textsc{Niccolò Tassi. Departamento de Matemática Aplicada \& IMAG,
			Universidad de Granada. Avda. Fuentenueva S/N, 18071 Granada, Spain.}
		\textit{E-mail address}: \texttt{\href{mailto:tassi@ugr.es}{tassi@ugr.es}}
}}

\begin{document}

\maketitle

\begin{abstract}
We investigate the asymptotic behaviour of solutions of a class of nonlocal Fokker--Planck equations defined by  nonsingular, heavy-tailed convolution kernels and characterised by a scaling parameter $\e\in(0,1]$ and a fractional index $s\in(1/2,1)$. By employing a suitable version of the generalised central limit for heavy-tailed distributions and the use of Harris's theorem, we prove exponential convergence to the equilibrium with a rate that is independent of both $\e$ and $s$. 
This allows us to show uniform--in--time convergence for both $\e\to 0$ and $s\to1$ recovering the limiting equations.
\end{abstract}
\setcounter{tocdepth}{2}

\tableofcontents

\section{Introduction}
In this paper, we consider the following linear partial integro-differential equation 
\begin{equation}\label{DFFP}\tag{DFFP}
    \partial_t u(t,x)=\frac{1}{\e^{2s}}[J^s_\e*u-u]+\div(x u)
\end{equation}
where $ u = u(t,x) $ is a time-dependent probability density on $\mathbb{R}^d$, $\e > 0$ is a scaling parameter, and $ s \in (1/2,1) $ is a fractional index. 
We define $J_\e^s$ by
$$
J^s_\e(x) := \e^{-d} J^s\left(\frac{x}{\e}\right).
$$

The kernel $ J^s $ is a nonnegative,  nonsingular, heavy-tailed probability distribution satisfying
$$
J^s(x) \sim \frac{C}{|x|^{d+2s}} \quad \text{as } |x| \to \infty.
$$
The first term on the right-hand side of \eqref{DFFP}, namely the nonlocal operator
\begin{equation}\label{eq: diffoper}
   \mathcal{A}^s_\e u= \frac{1}{\e^{2s}} \bigl[J^s_\e * u - u\bigr]=\frac{1}{\e^{2s}}\ird J^s_\e(y)[u(x-y)-u(x)]\d y 
\end{equation}
models a Lévy-type diffusion process driven by the kernel $J^s$.
 The second term, $\div(x u)$, corresponds to a confining drift induced by the quadratic potential $ V(x) = \frac{1}{2} |x|^2 $. This drift prevents the mass from escaping to infinity and ensures the existence of a unique probability equilibrium state.
 
It is well known that, under correct normalisation of the kernel $J^s$, the operator $\mathcal{A}_\e^s$ converges (in a suitable sense) to the fractional Laplacian $-(-\Delta)^s u$ as $\e \to 0$---see for instance \citep{AndreuVaillo2010}. As a result, one expects that the solution $ u^\e$  of \eqref{DFFP} converges, as $\e \to 0$, to the solution  $v^s $ of the fractional Fokker–Planck equation:
\begin{equation}\label{eq: FFP}\tag{FFP}
    \partial_t v^s = -(-\Delta)^s v^s + \div(x v^s).
\end{equation}
We  point out that the local drift term naturally arises when starting from the PDE
\begin{equation}\label{eq: NLDIFF}
\partial_t f = \mathcal{A}_\e^s f
\end{equation}
and carrying out the superdiffusive change of variables commonly used to transform the fractional diffusion equation into the fractional Fokker--Planck equation. Indeed, defining
$
u(t,x) := e^{d t} f\Big(\frac{e^{2 s t} - 1}{2 s}, e^{t} x\Big),
$
then $u$ solves
\begin{equation}\label{eq: NANLFP}
\partial_t u=\frac{1}{\e(t)^{2 s}} (J_{\e(t)} * u - u)+\div(xu),
\end{equation}
where the time-dependent scaling parameter is $\e(t) := e^{-t}$.  
By ``freezing'' the time dependence in $\e(t)$, we recover the equation \eqref{DFFP} studied in this paper.

 We  remark that  \eqref{DFFP} is a natural generalisation of the equation studied by the authors in \citep{canizotassi2024}, namely
\begin{equation}
    \label{eq: NLFP}
    \tag{NLFP}
    \partial_t u = \frac{1}{\e^2} (J*u-u) +\div(xu)
\end{equation}
where the kernel $J$ was a centred probability distribution with finite second moment, approximating uniformly in time the classical Fokker--Planck equation
\begin{equation}
    \label{eq:FP}
    \tag{FP}
    \partial_t v = \Delta v + \div(xv)
\end{equation}
This paper is the natural follow-up to our previous work, where  the kernel is now heavy-tailed  $J^s\sim|x|^{-d - 2s}$, and consequently does not have second moment.

Equations similar to \eqref{DFFP} and \eqref{eq: NLFP} often arise  in various applications, e.g. in biological models and models of collective behaviour (see below for references). Furthermore, our interest in these equations lies in developing strategies to obtain uniform limits as highlighted in \citep{canizotassi2024}, based on Harris's Theorem.

A key challenge in the analysis of nonlocal equations is to ensure that the large-time behaviour is preserved in the limit of the parameters, here $\e\to0$ and $s\to 1$. Constants arising in entropy-based estimates often blow up or vanish as one changes the parameters, making them difficult to obtain uniform estimates. 
 Moreover, since the kernel is not singular, the equation lacks regularizing effects, which makes the analysis more challenging.
 By employing a Harris-type approach we are able to overcome these difficulties and to prove exponentially fast convergence towards the equilibrium, with a rate that is independent of $\e$ and $s$.

The uniformity in $\e$ of the rate of convergence also provides insight on the asymptotic behaviour of nonautonomous equation obtained via a self-similar change of variable as in \eqref{eq: NANLFP}.

These uniform results underline a robust structure of the equation, showing that the ``approximation'' does not get lost in the long--time. This stability has significant implications in numerical analysis, where discretisation schemes can be viewed as nonlocal approximations of PDEs and face similar challenges. The question of whether numerical methods preserve the large-time behaviour of their limiting equations has practical importance and is the subject of active research.  We refer to this property as schemes being time asymptotic-preserving. While we do not address this question directly, our results are closely related and provide insight into the underlying structure of the approximation.

Another novelty of this paper is the derivation of an explicit estimate for the \textit{long--range to short range} limit, i.e. when $s\to 1$. Such limits are relatively rare; one example is the analysis of the fractional Gagliardo norm as $s\to1$ in \citep{BourgainBrezisMironescu}.
In general, few results address the \textit{continuity of convergence} with respect to the fractional index, even though one would naturally expect fractional results to recover the classical ones in this limit. For instance, our analysis also shows that the speed of convergence to the limit distribution obtained  via quantitative generalised central limit theorems remains uniform as $s\to 1$, rather than vanishing as in the classical results of \citep{goudon_junca_toscani_BE_2002}---see Remark \ref{rmk: CLTunif}.

In general, results on the commutativity of limits, as illustrated in Figure \ref{diagram}, are both mathematically and practically interesting: they justify that the order in which parameters and long-time limits are taken does not affect the final behaviour.

\paragraph{Summary of main results}

The main results of this paper concern the three fundamental asymptotic regimes of equation \eqref{DFFP}:
\begin{enumerate}
    \item \textbf{Long-time asymptotics:} The solution   of \eqref{DFFP} converges exponentially fast to the steady state $ F^\e_s(x) $ in weighted $ L^1 $ spaces, with an explicit lower bound on the rate which is independent of both $ \e $ and $ s $.
    
    \item \textbf{Non-singular-to-singular limit ($ \e \to 0 $):} For  $ s \in (1/2,1) $, the solution $ u_\e^s(t,x) $ converges to the solution of the fractional Fokker–Planck equation \eqref{eq: FFP} as $ \e \to 0 $, uniformly in time and in the parameter $s$.
    
    \item \textbf{Long--range to short--range limit ($ s \to 1 $):}  For  $ \e > 0 $, the solution $ u_\e^s(t,x) $ converges to the solution of the Nonlocal Fokker–Planck equation \eqref{eq: NLFP} as $ s \to 1 $, again uniformly in time and in the parameter $\e$. 
\end{enumerate}
In particular, the convergence of the steady states holds.
This can be summarised by the diagram in Figure \ref{diagram}.

\begin{figure}[h]
    \centering
    \includegraphics[width=1\textwidth]{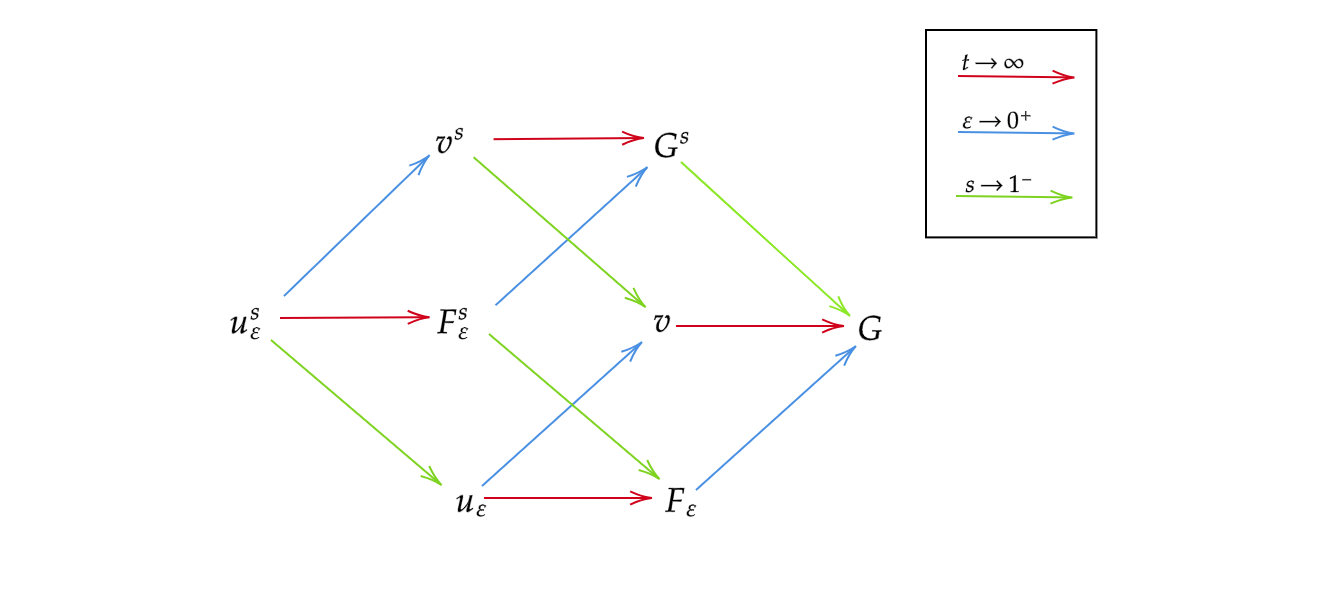}
    \caption{Quantitative convergence in $L^1_k$. The diagram illustrates that limits are uniform and independent of the specific convergence path taken.}
    \label{diagram}
\end{figure}
\bigskip

Let us state our result more precisely.  In this article, we restrict our attention to the case $s \in (1/2, 1)$.
Similar results regarding the time-asymptotic behaviour and the $\e$-limit can be obtained \textit{mutatis mutandis} for $s \in [\bar{s}, 1/2]$ (there has to be a lower bound  to avoid pathological behaviour around $s=0$).

For $k\ge 0$, we denote by $\ap{x}^k:=(1+|x|^2)^{k/2}$ and $ L^1_k(\mathbb{R}^d) $ the weighted Lebesgue space
$$
L^1_k := \left\{f :\mathbb{R}^d \to\mathbb{R} \middle|\norm{f}_{L^1_k} := \int_{\mathbb{R}^d} \ap{x}^{k} |f(x)|  dx < \infty \right\},
\quad \text{where }\ap{x} := (1 + |x|^2)^{1/2}.
$$

Similarly, given a  normed space $(S(\R^d),\norm{\cdot}_{S})$ we denote by $S_k(\R^d)$ the weighted space $$S_k:=\left\{f:\R^d\to\R: \norm{f}_{S_k}:=\norm{f   \ap{\cdot}^k}_S< \infty\right\}$$.

Standard conditions on the kernel $J^s$ in equation \eqref{DFFP}, as presented for instance in \citep{AndreuVaillo2010}, typically take the following form:
\setcounter{hyp}{-1}
\begin{hyp}\label{assJ}
    Let $s \in (0,1)$ be a (fixed) fractional index and let $J^s$ be a probability density. We assume that  there exists $\delta\in(0,2-2s]$ such that the Fourier transform of $J^s$ satisfies
    \begin{equation}\label{eq: assJ}
    \hat{J}^s(\xi) = 1 - |\xi|^{2s} + O(|\xi|^{2s+\delta}) \quad \text{as } |\xi| \to 0.
    \end{equation}
\end{hyp}

In particular, this implies that $\ird J^s(x)  dx = 1$ and  $\ird  J^s(x) x  \d x = 0$, if $s>1/2$. This condition is sufficient if the goal is only to study the non-singular-to-singular limit $\e\to 0$, for a fixed value of $s$. 
However, to obtain results that hold uniformly in the limit $s \to 1^{-}$, we must strengthen this assumption to a uniform--in--$s$ 
hypothesis.
\begin{hyp}\label{ass: sunif}
Let $(J^s)_{s\in(\frac{1}{2},1)}$ be a family of probability densities $J^s\in L^1\cap L^p$, for some $p\in(1,\infty]$. We assume the following
\begin{enumerate}[(i)]
\item \emph{(Integrability.)}  $J^s$ is the density of a Lévy measure, i.e. \begin{equation}\label{eq: levydensity}
\ird\min\{1,|y|^2\} J^s(y)\d y<\infty,
\end{equation}
\item \emph{(Uniform--in--$s$ low frequency behaviour.)} There exist constants $\delta > 0$ and $C_0 > 0$, independent of $s$, such that the Fourier transform of $J^s$ satisfies:
\begin{equation}\label{eq: expansion_uniform}
\hat{J}^s(\xi)=1-|\xi|^{2s}+R_s(\xi)\qquad\text{and}\qquad|R_s(\xi)|\le C_0|\xi|^{2s+\delta}\quad \text{ for } |\xi| \le 1.
\end{equation}

\item \emph{(Pointwise tail control.)} There exist constants $C, R>0$, and a function $\Psi\in L^1_2$ such that $\ird x \Psi(x)\d x=0$, all independent of $s$, such that 
\begin{equation}\label{eq: tailpointwise}
    J^s(x)\le C(1-s)|x|^{-d-2s}+\Psi(x)\qquad \text{for all }|x|\ge R.
\end{equation}
\end{enumerate}
\end{hyp}

To obtain the anomalous-to-classical diffusion quantitative limit, we also need to quantify in which sense and how fast the family $J^s$ is converging to $J=J^1$. We decided to present the following condition although other choices are possible.

\begin{hyp}\label{ass: remainderest}
The family $(J^s)$ converges,as $s\to1$, to a  standard nonlocal diffusion kernel $J^1$ also satisfying \eqref{eq: expansion_uniform}.
The convergence takes place in the following sense: there exists a constant $\mathfrak{C} > 0$ such that for all $s \in (1/2, 1)$ and $\xi \in \mathbb{R}^d$:$$\left| \frac{1 - \hat{J}^s(\xi)}{|\xi|^{2s}} - \frac{1 - \hat{J}^1(\xi)}{|\xi|^2} \right| \le \mathfrak{C} (1-s) |\xi|^\delta \max(1, |\log |\xi||).$$
\end{hyp}

\begin{osservazione}

The crucial difference of Hypothesis \ref{ass: sunif} is the fact that we take $\delta$ independent of $s$. 
In fact, the expansion of a general heavy tailed function is of the type
\begin{equation}\label{eq: notunifs}
1-a|\xi|^{2s}+b|\xi|^2+\dots
\end{equation}
i.e. we would have had to take $\delta=2(1-s)$. Under Hypothesis \ref{ass: sunif}, the family $(J^s)$ enjoys cancellation properties that allows us to obtain the uniform--in--$s$ limit. We refer to the Section \ref{sec: examples} for some examples of kernels that fulfil such hypotheses.
Let us remark again that if we had the expansion in \eqref{eq: notunifs}, i.e. Hypothesis \eqref{assJ}, we can prove the results independently on $\e$ but the rate of convergence would depend on $(1-s)$ and therefore would degenerate in the limit  $s\to 1$.

Let us also observe that while the Fourier expansion \eqref{eq: expansion_uniform} suggests the tail behaviour \eqref{eq: tailpointwise}, we state the pointwise bound explicitly to avoid technical Tauberian requirements (such as being monotonic for large $|x|$ or being a probability density of an infinitely divisible law) and to simplify the analysis. 
\end{osservazione}

We define the \textit{equilibrium} of equation \eqref{DFFP} as the function $F_\e^s\in L^1$ that satisfies  in the weak sense the stationary PDE
$$
\frac{1}{\e^{2s}}(J_\e^s*F_\e^s-F_\e^s)+\div(x F_\e^s)=0
$$

The main results are the following.
\begin{teorema}[Long-time asymptotics]\label{thm: main1}
Let  $(J^s)$ be a family of kernels satisfying Hypothesis \ref{ass: sunif}. For any $\e  \in (0,1)$, $s\in(1/2,1)$ and $k \in (0, 1]$:
\begin{enumerate}[(i)]\item There exists a unique probability equilibrium $F^s_\e  \in L^1_k(\mathbb{R}^d)$ of equation \eqref{DFFP}. \item For any initial data $u_0 \in L^1_k(\mathbb{R}^d)$, there exist constants $C, \lambda > 0$ such that the solution $u(t)$ satisfies:$$\|u(t, \cdot) - F^s_\e \|_{L^1_k} \le C e^{-\lambda t} \|u_0 - F^s_\e \|_{L^1_k}, \quad \forall t \ge 0.$$
    \item The constants $C$ and $\lambda$ can be chosen independently of $s$ and $\e$.

\end{enumerate}\end{teorema}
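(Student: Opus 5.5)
We will prove the result with Harris's theorem applied to the Markov semigroup $(S_t)_{t\ge0}$ generated by \eqref{DFFP} on $L^1_k$. It suffices to check two hypotheses, for a fixed time $T>0$ and constants independent of $\e$ and $s$:
\begin{enumerate}[(a)]
\item a Lyapunov (geometric drift) condition for the weight $V(x)=\ap{x}^k$;
\item a Doeblin-type minorisation $S_T u_0\ge \alpha\,\eta$ for every probability density $u_0$ supported in the ball $B_{R_0}=\{V\le R_0\}$, where $\eta$ is a fixed probability measure.
\end{enumerate}
Harris's theorem then yields existence and uniqueness of the equilibrium $F^s_\e$ together with exponential contraction in the weighted norm $\norm{\cdot}_{L^1_k}$. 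Equivalence of this norm with the Harris norm $\int(1+\beta V)|f|$ gives (ii). Since $C$ and $\lambda$ depend only on the constants in (a) and (b), uniformity of those constants gives (iii).

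\textbf{Lyapunov condition.} By duality we compute the action of the adjoint generator on $V$:
$$\mathcal{L}^*V(x)=\frac{1}{\e^{2s}}\ird J^s_\e(y)\,[V(x+y)-V(x)]\d y - x\cdot\nabla V(x).$$
The drift part equals $-k|x|^2\ap{x}^{k-2}$, which is at most $-kV+k$. For the nonlocal part we rescale $y=\e z$. Since $k\le 1<2s$ and $J^s$ has mean zero, we subtract the linear term $\e z\cdot\nabla V(x)$. We then use the bound $|\nabla^2 V|\lesssim 1$ on $|z|\le 1/\e$ and the bound $|V(x+\e z)-V(x)|\le \e^k|z|^k$ for large $|z|$. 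This gives
$$\frac{1}{\e^{2s}}\ird J^s(z)\,[V(x+\e z)-V(x)-\e z\cdot\nabla V(x)]\d z\le C\Bigl(\e^{2-2s}\!\int_{|z|\le1/\e}\!|z|^2J^s+\e^{1-2s}\!\int_{|z|>1/\e}\!|z|J^s\Bigr).$$
Hypothesis \ref{ass: sunif}(iii) is used here. The tail term $C(1-s)|z|^{-d-2s}$ contributes $(1-s)\,\e^{2-2s}\cdot\e^{-(2-2s)}/(2-2s)$, which is bounded uniformly in $s$ because the factor $(1-s)$ cancels the $1/(2-2s)$ coming from integrating $|z|^{1-2s}$. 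The part $\Psi\in L^1_2$ contributes a bounded amount since $\e\le1$ and $s<1$. Hence $\mathcal{L}^*V\le -kV+K$ with $K$ independent of $\e$ and $s$. By Gronwall, $\int V\,S_tu_0\le e^{-kt}\int V u_0+K/k$.

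\textbf{Minorisation, the main obstacle.} The kernel is nonsingular, so there is no regularisation and we must work with an explicit representation of the solution. Following \citep{canizotassi2024}, we write the solution of \eqref{DFFP} as a series over the number of jumps. After characteristics for the drift, the $n$-jump term is a convolution of $n$ rescaled copies $J^s_{\e e^{-(t-t_i)}}$ weighted by Poisson probabilities with rate $\e^{-2s}$. The dominant terms have $n\approx T\e^{-2s}$ jumps. Their sum is a sum of independent heavy-tailed variables with scales $\sigma_j$, which, suitably normalised (cf.\ the macros $\hat f(\xi\sigma_j/n^{1/2s}\bar\sigma)$), is close to a stable law $G^s$.

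The key step is a quantitative generalised CLT that is uniform in $s$. We estimate in Fourier variables
$$\prod_j\hat J^s\Bigl(\frac{\xi\sigma_j}{n^{1/2s}\bar\sigma}\Bigr)-\prod_j\hat G^s\Bigl(\frac{\xi\sigma_j}{n^{1/2s}\bar\sigma}\Bigr),$$
using \eqref{eq: expansion_uniform} with $\delta$ independent of $s$ for $|\xi|\lesssim n^{1/2s}$, and the $L^p$ bound on $J^s$ to control high frequencies. This should give an $L^\infty$ (or $L^1$) error of order $n^{-\delta/2s}$, uniformly in $s\in(1/2,1)$. Since $G^s$ is bounded below by some $c>0$ on a fixed ball, uniformly in $s$, we obtain for $\e\le\e_0$ and all $x_0\in B_{R_0}$ a lower bound $S_T\delta_{x_0}\ge c\,\mathbf 1_{B_1}$. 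The remaining range $\e\in[\e_0,1)$ is handled directly: a bounded number of jumps with $J^s\in L^p$, positive near the origin, together with Hypothesis \ref{ass: sunif}(ii), gives positivity on a ball with constants depending only on $\e_0$.

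Combining (a) and (b) in Harris's theorem concludes the proof. We expect the uniform-in-$s$ CLT to be the main obstacle, since standard Berry--Esseen-type rates for stable laws degenerate as $s\to1$.
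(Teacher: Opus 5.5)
Your route is the paper's: Harris's theorem with $V=\ap{x}^k$; a Lyapunov bound in which the factor $(1-s)$ in \eqref{eq: tailpointwise} absorbs the $1/(2-2s)$ coming from the near field; and a minorisation built from the Wild expansion, Poisson weights concentrated on $n\approx t\e^{-2s}$, and the uniform-in-$s$ $L^\infty$ Berry--Esseen bound (Theorem \ref{thm: BE}), with small and large $\e$ treated separately. Your Lyapunov computation is organised differently: you cut at $|z|=1/\e$ and bound the integrand in absolute value before inserting the one-sided tail bound, without using Proposition \ref{prop: fraclap_decay}. That is actually sounder than Step 1b of the paper, which applies the upper bound \eqref{eq: tailpointwise} to the sign-changing integrand $\varphi(x+\e y)-\varphi(x)$.

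Two steps, however, do not give what you claim.

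\emph{First, the far-field Lyapunov term is not uniform in $s$.} On $|z|>1/\e$ you bound the integrand by $C\e|z|$, using $\e^k|z|^k\le\e|z|$ and $|\e z\cdot\nabla V|\le k\e|z|$. The tail then contributes $\e^{1-2s}\int_{|z|>1/\e}|z|\,C(1-s)|z|^{-d-2s}\d z=C|\mathbb{S}^{d-1}|\frac{1-s}{2s-1}$, which blows up as $s\downarrow 1/2$. You only evaluated the near-field tail term, so your $K$ is not uniform on $(1/2,1)$. A fix: on $|z|>1/\e$, keep $|V(x+\e z)-V(x)|\le\e^k|z|^k$ and do not subtract the linear term there. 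This gives $C|\mathbb{S}^{d-1}|\frac{1-s}{2s-k}\le\frac{C|\mathbb{S}^{d-1}|}{2(1-k)}$ for $k<1$, mirroring the factor $1/(2s-k)$ in Proposition \ref{prop: fraclap_decay}. It requires the truncated first moment $\int_{|z|\le 1/\e}zJ^s$ to vanish (e.g.\ $J^s$ symmetric), which the paper also assumes tacitly in Step 1a. For $k=1$ no uniform bound near $s=1/2$ is possible. For $J^s=G^s$, at $x=0$ one has $\e^{-2s}\int G^s(z)(\ap{\e z}-1)\d z\ge\e^{1-2s}\int G^s|z|\d z-\e^{-2s}\to\infty$, and the paper's bound degenerates in the same way.

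\emph{Second, the large-$\e$ minorisation uses an unavailable assumption.} For $\e\in[\e_0,1)$ you invoke positivity of $J^s$ near the origin, which is not among the hypotheses. Even if it were, it would have to be uniform in $s$ and in the scales $\e e^{t_i}$. The paper handles this regime with the same GCLT. Fix $N$, uniformly in $s$ via the minorant $H$ of Definition \ref{def: lbStables}, such that $C_{BE}N^{-\delta/2}\le\frac12 H(\eta/\e_0)$. Then $\Jn\ge B\,n^{-d/2s}$ on $B_\eta$ for every $n\ge N$ (Lemma \ref{lemma: large eps}). Keep only the $n=N$ term of the Wild sum; its Poisson weight is bounded below by $e^{-t/\e_0^{2}}t^N/N!$ because $\e\ge\e_0$.
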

\begin{osservazione}
        If one is only interested in uniformity with respect to $\e$, a hypothesis such as Hypothesis \ref{assJ} is more suitable, and the same result holds for $k \in (0,2s)$. Furthermore, the rate of convergence could be slightly sharpen, see  Remark \ref{rmk: smalldelta}.
\end{osservazione}

This result has interesting consequences regarding the limits of the scaling parameter and the fractional index: the limits of the parameters that do not hold exclusively in compact time sets $[0,T].$  We refer to these properties as time-asymptotic preserving limits

This is summarised in the two following theorems.
\begin{teorema}[Non-singular to singular limit $\e  \to 0$]\label{thm: main2}
Let  $(J^s)$ be a family of kernels satisfying Hypothesis \ref{ass: sunif}. For $\e\in(0,1], $ $s\in(1/2,1)$ and $k\in(0,1),$ let $u^s_\e (t)$ be the solution to \eqref{DFFP} and $v^s(t)$ be the solution to the fractional Fokker--Planck equation \eqref{eq: FFP} with the same initial data $u_0 \in H^\eta\cap L^1_M(\mathbb{R}^d)$ where $\eta>2+\delta$, and $1\ge M>m > k$. Then there exists a constant $C > 0$, independent of $\e $ and $t$, such that:$$\|u^s_\e (t) - v^s(t)\|_{L^1_k} \le C \e ^{\esp}$$ where 
$\theta = \frac{m-k}{m+d/2}$ is an interpolation exponent.
Obviously taking, $\gamma=\theta\delta$, the constant is uniform in $s$.

\end{teorema}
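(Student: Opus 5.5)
The plan is a Duhamel-type argument made uniform in time by Theorem \ref{thm: main1}, with an interpolation at the end to reach the weight $k$. Set $w:=u^s_\e-v^s$. Then $w(0)=0$ and
\begin{equation*}
\partial_t w=\mathcal{A}^s_\e w+\div(xw)+\bigl(\mathcal{A}^s_\e+(-\Delta)^s\bigr)v^s .
\end{equation*}
Let $S_\e(t)$ be the semigroup of \eqref{DFFP}. Duhamel's formula gives
\begin{equation*}
w(t)=\int_0^t S_\e(t-\tau)\,\mathcal{R}(\tau)\d\tau, \qquad \mathcal{R}:=\bigl(\mathcal{A}^s_\e+(-\Delta)^s\bigr)v^s .
\end{equation*}
Since $\mathcal{A}^s_\e$ and $(-\Delta)^s$ both preserve mass, $\mathcal{R}(\tau)$ has zero mass. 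Hence Theorem \ref{thm: main1} (the contraction applied to zero-mass data) gives
\begin{equation*}
\norm{S_\e(t-\tau)\mathcal{R}(\tau)}_{L^1_k}\le Ce^{-\lambda(t-\tau)}\norm{\mathcal{R}(\tau)}_{L^1_k},
\end{equation*}
with $C,\lambda$ independent of $\e,s$. Therefore
\begin{equation*}
\norm{w(t)}_{L^1_k}\le C\lambda^{-1}\sup_\tau\norm{\mathcal{R}(\tau)}_{L^1_k}.
\end{equation*}
The whole problem thus reduces to a consistency estimate for $\mathcal{R}$ that is uniform in time.

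To estimate $\mathcal{R}$ I work in Fourier variables. There
\begin{equation*}
\hat{\mathcal{R}}(\xi)=\Bigl[\e^{-2s}\bigl(\hat J^s(\e\xi)-1\bigr)+|\xi|^{2s}\Bigr]\hat v^s(\xi)=\e^{-2s}R_s(\e\xi)\,\hat v^s(\xi).
\end{equation*}
By \eqref{eq: expansion_uniform}, for $|\e\xi|\le1$ this is bounded by $C_0\e^{\delta}|\xi|^{2s+\delta}|\hat v^s|$. For $|\e\xi|>1$, the bound $|\hat J^s|\le1$ gives a symbol of size at most $2\e^{-2s}+|\xi|^{2s}\le C\e^\delta|\xi|^{2s+\delta}$. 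So in both regimes
\begin{equation*}
\norm{\mathcal{R}}_{L^2}\lesssim\e^\delta\norm{v^s}_{H^{2s+\delta}}\le\e^\delta\norm{v^s}_{H^{2+\delta}}.
\end{equation*}
I then need $\sup_t\norm{v^s(t)}_{H^{2+\delta}}\lesssim\norm{u_0}_{H^\eta}$, uniformly in $s$. This follows from the explicit Fourier solution of \eqref{eq: FFP}:
\begin{equation*}
\hat v(t,\xi)=\hat u_0(e^{-t}\xi)\exp\bigl(-(1-e^{-2st})|\xi|^{2s}/2s\bigr).
\end{equation*}
The dilation $\xi\mapsto e^{-t}\xi$ only lowers high frequencies, and the factor $e^{dt}$ produced by the change of variables is compensated once one works with the $\dot H^\eta$ seminorm together with the $L^1$ mass bound on low frequencies. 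The margin $\eta>2+\delta$ is used here.

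It remains to pass from an $L^2$ smallness bound to an $L^1_k$ bound. I split the integral at a radius $\rho$. On $|x|\le\rho$, Cauchy--Schwarz gives a contribution of at most $\rho^{k+d/2}\norm{\mathcal R}_{L^2}$. On $|x|>\rho$, a contribution of at most $\rho^{k-m}\norm{\mathcal R}_{L^1_m}$. Optimising in $\rho$ gives
\begin{equation*}
\norm{\mathcal{R}}_{L^1_k}\lesssim\norm{\mathcal R}_{L^2}^{\theta}\norm{\mathcal R}_{L^1_m}^{1-\theta},\qquad \theta=\frac{m-k}{m+d/2},
\end{equation*}
which is exactly the exponent in the statement and yields $\e^{\theta\delta}$. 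A uniform bound on $\norm{\mathcal R}_{L^1_m}$ requires two things. First, $\sup_t\norm{v^s(t)}_{L^1_M}<\infty$; this follows from moment propagation for \eqref{eq: FFP}, using $M\le1<2s$ and the initial condition $u_0\in L^1_M$. Second, the operators $\mathcal{A}^s_\e$ and $(-\Delta)^s$ must map into $L^1_m$ with loss only from $M$ to $m$. For $\mathcal{A}^s_\e$ I use the tail bound \eqref{eq: tailpointwise} and the weight estimate $\ap{x+y}^m\le\ap{x}^m\ap{y}^m$; the mismatch between $m<1<2s$ and the $2s$ tail leaves a factor $(1-s)$ that compensates. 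Since $\e^{-2s}$ blows up, I will not bound the two operators separately. Instead I interpolate $\mathcal{R}$ via the splitting above applied to $w$ directly, or better, apply the interpolation to the Duhamel integrand after a short-time regularisation.

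I expect the main obstacle to be this last step: a uniform-in-$\e$ bound for $\norm{\mathcal{A}^s_\e v^s}_{L^1_m}$. My first attempt will be a Taylor expansion of $v^s$ at second order on $|y|\le1$, using $H^\eta\subset W^{2,\cdot}$ regularity and weights, combined with the tail bound on $|y|>1$. Both pieces scale like $\e^{-2s}\cdot\e^{2s}$ thanks to the Lévy density condition \eqref{eq: levydensity} and \eqref{eq: tailpointwise}.
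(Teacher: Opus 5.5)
Your route differs from the paper's and is better in one respect, but the step you flag as the main obstacle is a genuine gap as proposed.

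\textbf{Comparison with the paper.} The paper gets a finite-time bound $CT^*\e^{\theta\delta}$ from the merely bounded (non-decaying) stability estimate of Proposition \ref{prop: stabilityEPS}. For large times it passes through the equilibria: Theorem \ref{thm: conveqEPS}, via the resolvent bound on zero-mass data, together with \eqref{eq: SG-FFP}. Since $Ce^{-\lambda t}\le\e^{\theta\delta}$ only once $t\gtrsim\lambda^{-1}\log(1/\e)$, gluing Theorems \ref{thm: convfinitetimeEPS} and \ref{thm: convunifEPS} as written costs a factor $\log(1/\e)$. Your single Duhamel formula, using the zero-mass contraction of Theorem \ref{thm: main1}, avoids that loss and needs neither the equilibria nor \eqref{eq: SG-FFP}. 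Your $L^2$ consistency bound is correct. So is the time-uniform $\dot H^{2s+\delta}$ bound on $v^s$: use $\norm{v^s(t)}_{\dot H^a}\le e^{(a+d/2)t}\norm{u_0}_{\dot H^a}$ for $t\le t_0$, and $|\hat u_0|\le1$ with the factor $e^{-(1-e^{-2st_0})|\xi|^{2s}/2s}$ for $t\ge t_0$. The price of your route is that consistency must hold along the whole trajectory, including $\tau\to0$.

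\textbf{The gap.} You need a bound on $\norm{\mathcal{R}(\tau)}_{L^1_m}$ that is uniform in $\e$ and integrable in $\tau$, and the proposal does not supply one.
\begin{itemize}
\item Your claim that $\mathcal{A}^s_\e$ maps $L^1_M$ into $L^1_m$ with a compensating $(1-s)$ holds only for the far field. On $L^1_m$ the full operator has norm of order $\e^{-2s}$.
\item The second-order Taylor expansion, using $\int J^s y\d y=0$ and \eqref{eq: tailpointwise}, gives $\norm{\mathcal{A}^s_\e\varphi}_{L^1_m}\lesssim\norm{\varphi}_{L^1_m}+\norm{\nabla\varphi}_{L^1_m}+\norm{D^2\varphi}_{L^1_m}$. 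This requires \emph{weighted $L^1$} control of derivatives, which $H^\eta$ (unweighted and $L^2$-based) does not provide.
\item For $\tau\ge t_0>0$ this is harmless. Write $\mathcal{R}(\tau)=\big(e^{d\tau}u_0(e^{\tau}\cdot)\big)*\big(\mathcal{A}^s_\e+(-\Delta)^s\big)G^{s;\sigma(\tau)}$; weighted Young then gives $\norm{\mathcal{R}(\tau)}_{L^1_m}\lesssim\norm{u_0}_{L^1_m}\norm{G^{s;\sigma(\tau)}}_{W^{2,1}_m}$, which is bounded for $\tau\ge t_0$.
\item Near $\tau=0$, $u_0\in W^{2,1}_m$ is not among the hypotheses. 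The paper's argument rests on the same requirement, packaged in the factor $\norm{\varphi}_{W^{\eta,1}_m}$ of Proposition \ref{prop: consistencyEPS}.
\item Moreover $\norm{D^2G^{s;\sigma(\tau)}}_{L^1}\sim\tau^{-1/s}$. After interpolation your integrand is therefore only bounded by $\e^{\theta\delta}\tau^{-(1-\theta)/s}$, which is integrable only if $\theta>1-s$.
\end{itemize}

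\textbf{How to close it.} One option is to prove an order-$2s$ bound, e.g.\ $\norm{\mathcal{A}^s_\e G^{s;\sigma}}_{L^1_m}\lesssim\sigma^{-1}$ uniformly in $\e$, so the singularity becomes $\tau^{-(1-\theta)}$. The simpler option is to interpolate $w$ itself on $[0,t_0]$:
\begin{itemize}
\item $\norm{w(t)}_{L^1_m}\le\norm{u^s_\e(t)}_{L^1_m}+\norm{v^s(t)}_{L^1_m}$ is bounded by the Lyapunov estimates, since $m<1<2s$.
\item Compare the two explicit Fourier formulas. Both exponents have non-positive real part, so $|e^a-e^b|\le|a-b|$, which gives $|\hat w(t,\xi)|\le C\e^\delta t|\xi|^{2s+\delta}|\hat u_0(e^{-t}\xi)|$.
\item Hence $\norm{w(t)}_{L^2}\lesssim\e^\delta\norm{u_0}_{\dot H^{2s+\delta}}$, and so $\norm{w(t)}_{L^1_k}\lesssim\e^{\theta\delta}$, for $t\le t_0$.
\item Then restart Duhamel at $t_0$: $w(t)=S_\e(t-t_0)w(t_0)+\int_{t_0}^tS_\e(t-\tau)\mathcal{R}(\tau)\d\tau$. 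Here $w(t_0)$ has zero mass, so both terms are $O(\e^{\theta\delta})$ uniformly in $t$.
\end{itemize}
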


\begin{teorema}[Long--range to short--range limit]\label{thm: main3}Let  $(J^s)$ be a family of kernels satisfying  Hypotheses \ref{ass: sunif} and \ref{ass: remainderest}.  Let $\e\in(0,\e_{\max}], $ $s\in(1/2,1)$ and $k\in(0,1),$ where $\e_{\max}>0$   depends only on the dimension $d$.
Let $u_\e ^s$ denote the solution to \eqref{DFFP} and $u_\e^1$ the solution to \eqref{eq: NLFP} with the same initial data $u_0 \in H^\eta \cap L^1_M$ with $\eta > 2+\delta$ and $1\ge M>m > k$.

Then, there exists a constant $C$ independent of $\e,s$ such that for all $t>0$ 
$$ \norm{u_\e ^s(t) - u_\e ^1(t)}_{L^1_k} \le C (1-s)^\theta \left(1 + \e ^{\theta \delta} |\log\e |\right),$$ where $\theta = \frac{m-k}{m + d/2}$ is an interpolation exponent.
\end{teorema}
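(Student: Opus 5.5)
We follow the scheme used for Theorem \ref{thm: main2}: combine a finite-time stability estimate with the uniform exponential relaxation of Theorem \ref{thm: main1}, and interpolate between a weak (Fourier/$L^2$-type) estimate and a strong $L^1_M$ moment bound. Write $w(t)=u^s_\e(t)-u^1_\e(t)$. Then
$$\partial_t w=\mathcal{A}^1_\e w+\div(xw)+(\mathcal{A}^s_\e-\mathcal{A}^1_\e)u^s_\e,$$
so $w$ solves \eqref{eq: NLFP} with a source and zero initial datum. In Fourier variables, along the characteristics $\xi\mapsto e^{-t}\xi$ generated by the drift, the multiplier of $\mathcal{A}^s_\e$ is $-\e^{-2s}(1-\hat J^s(\e\xi))=-|\xi|^{2s}\,\frac{1-\hat J^s(\e\xi)}{|\e\xi|^{2s}}$. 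Hypothesis \ref{ass: remainderest} then gives the pointwise symbol bound
$$\Big|\tfrac{1-\hat J^s(\e\xi)}{\e^{2s}}-\tfrac{1-\hat J^1(\e\xi)}{\e^{2}}\Big|\le |\xi|^{2s}\,\mathfrak C(1-s)|\e\xi|^{\delta}\max(1,|\log|\e\xi||)+\big||\xi|^{2s}-|\xi|^2\big|\,\tfrac{1-\hat J^1(\e\xi)}{|\e\xi|^2}.$$
The second term is $O((1-s)|\xi|^{2}\max(1,|\log|\xi||))$, since the last factor is bounded by \eqref{eq: expansion_uniform}. After multiplying by $|\xi|^\delta$-type factors, the $\log|\e\xi|$ produces the $\e^{\delta}|\log\e|$ contribution. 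Hence the source is controlled in a weighted $L^2$ norm of $\hat u^s_\e$ by $(1-s)(1+\e^\delta|\log\e|)\|u^s_\e\|_{H^{2+\delta}}$. Here one needs propagation of $H^\eta$ regularity, uniformly in $s,\e$. This follows from the Fourier representation: $|\hat u(t,\xi)|\le|\hat u_0(e^{-t}\xi)|$ because $|\hat J^s|\le1$, and the Sobolev norm is then only helped by the contracting characteristics, up to the factor $e^{dt}$ which is harmless for $\eta$ large.

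The first step is a finite-time $L^2$ bound, $\|w(t)\|_{L^2}\le C t(1-s)(1+\e^\delta|\log\e|)$. It follows from Duhamel's formula and the $L^2$ contractivity (up to $e^{dt/2}$) of the \eqref{eq: NLFP} semigroup. We then interpolate with uniform moment bounds: Hypothesis \ref{ass: sunif}(iii) gives $\sup_t\|u^s_\e(t)\|_{L^1_M}\le C$ uniformly in $s,\e$, using $M\le1<2s$, the tail factor $(1-s)|x|^{-d-2s}$ and $\Psi\in L^1_2$. The standard inequality $\|f\|_{L^1_k}\le C\|f\|_{L^2}^{\theta}\|f\|_{L^1_m}^{1-\theta}$ with $\theta=\frac{m-k}{m+d/2}$ is obtained by splitting $|x|\le R$ and $|x|>R$ and optimising in $R$. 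This gives $\|w(t)\|_{L^1_k}\le C_T(1-s)^\theta(1+\e^{\theta\delta}|\log\e|)$ on $[0,T]$, using $|\log\e|^\theta\le C|\log\e|$ for $\e\le\e_{\max}$. The restriction $\e\le\e_{\max}(d)$ is presumably needed so that $|\e\xi|\le1$ in the relevant frequency region and so that the log factor is monotone.

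To go uniform in time, take $T$ with $Ce^{-\lambda T}\le1/2$, using the uniform $C,\lambda$ of Theorem \ref{thm: main1}; these also apply to \eqref{eq: NLFP}, from \citep{canizotassi2024}. The equilibria satisfy $\|F^s_\e-F^1_\e\|_{L^1_k}\le\limsup_t\|w(t)\|$, which is obtained by running the finite-time estimate starting from $F^s_\e$ itself; its regularity and moments are inherited from the uniform estimates. Then for $t\ge T$,
$$\|w(t)\|\le\|u^s_\e(t)-F^s_\e\|+\|F^s_\e-F^1_\e\|+\|F^1_\e-u^1_\e(t)\|.$$
The outer terms are bounded via Theorem \ref{thm: main1} by $Ce^{-\lambda t}\|u_0-F\|$ plus the equilibrium gap, so an iteration over windows $[nT,(n+1)T]$ with contraction factor $1/2$ yields the bound for all $t$.

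The main obstacle is the equilibrium comparison together with the uniform-in-$(s,\e)$ Sobolev and moment bounds needed for the source term. In particular, controlling the logarithmic factor near $\e\xi\sim1$ and at large $|\xi|$ must be done without losing uniformity as $s\to1$. I would handle it by splitting frequencies at $|\xi|=1/\e$, using Hypothesis \ref{ass: remainderest} below and the crude bound $|1-\hat J|\le2$ together with the $H^\eta$ decay of $\hat u$ above.
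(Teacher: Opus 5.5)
\textbf{There is a genuine gap: the regularity needed for the uniform-in-time step, and with it the role of $\e_{\max}$.}

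Your finite-time step is sound. It is a rearrangement of the paper's: you put the consistency error on $u^s_\e$, propagate it with the \eqref{eq: NLFP} semigroup in $L^2$, and interpolate $w$ at the end. The paper instead puts the error on the $s$-independent solution $u^1_\e$, uses the uniform $L^1_k$ stability of Proposition~\ref{prop: stabilityEPS} for $L^s_\e$, and interpolates the source.

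The problem is your claim that $H^\eta$ regularity propagates uniformly. It is false. The bound $|\hat u(t,\xi)|\le|\hat u_0(e^{-t}\xi)|$ only gives
\[
\norm{u(t)}_{\dot H^\eta}\le e^{(\eta+d/2)t}\norm{u_0}_{\dot H^\eta}.
\]
The drift concentrates $u$, so the characteristics push Fourier mass to \emph{higher} frequencies, and larger $\eta$ makes this worse, not better. This is enough on $[0,T]$. However, the equation has no smoothing effect, and its equilibrium is not in $H^m$ for $m>\e^{-2s}-d/2$ (Theorem~\ref{thm: regequi}). So ``the regularity of $F^s_\e$ is inherited from the uniform estimates'' has no support, and that is exactly what your equilibrium comparison, started from $F^s_\e$, requires. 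The only mechanism that beats the $e^{(\eta+d/2)t}$ growth is the high-frequency damping of the jump term at rate $\e^{-2s}$. This is the real origin of $\e_{\max}$, not the logarithm or $|\e\xi|\le 1$. The paper takes $\e<\sqrt{2/(4+2\delta+d)}$ precisely so that $F^1_\e\in H^\eta$ with $\eta>2+\delta$.

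\textbf{How the paper handles the equilibria, and how to repair your route.} The paper evaluates the consistency error only on the $s$-independent equilibrium, which also avoids any uniform-in-$s$ high-frequency control. Theorem~\ref{thm: main1} together with Hille--Yosida gives $\norm{(L^s_\e)^{-1}}\le K/\lambda$ on zero-mass functions. Hence $\norm{F^s_\e-F^1_\e}_{L^1_k}\le \frac{K}{\lambda}\norm{(L^s_\e-L^1_\e)F^1_\e}_{L^1_k}$. This needs only $F^1_\e\in H^\eta\cap L^1_M$, which follows from Theorem~\ref{thm: regequi} with $s=1$ and the exponential tails proved in \citep{canizotassi2024}.

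Your contraction idea works if you start from $F^1_\e$ instead of $F^s_\e$. Then $u^1_\e\equiv F^1_\e$, and $u^s_\e(T)=e^{TL^s_\e}F^1_\e$ has controlled $H^\eta$ norm on $[0,T]$. Since $F^s_\e-F^1_\e$ has zero mass and $Ce^{-\lambda T}\le\frac12$,
\[
\norm{F^s_\e-F^1_\e}_{L^1_k}\le \norm{e^{TL^s_\e}(F^s_\e-F^1_\e)}_{L^1_k}+\norm{u^s_\e(T)-F^1_\e}_{L^1_k}\le \tfrac12\norm{F^s_\e-F^1_\e}_{L^1_k}+C_T(1-s)^\theta\big(1+\e^{\theta\delta}|\log\e|\big).
\]
A bare $\limsup$ of your finite-time bound cannot work, since that bound grows linearly in $t$; the factor $\tfrac12$ is what closes the argument.

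Your window iteration is a good way to get a bound genuinely uniform in $t$. The triangle inequality alone only becomes $O((1-s)^\theta)$ once $e^{-\lambda t}\lesssim(1-s)^\theta$, and gluing that to a finite-time bound linear in $T$ costs a factor $|\log(1-s)|$. But the iteration step
\[
w((n+1)T)=e^{TL^s_\e}w(nT)+\big(e^{TL^s_\e}-e^{TL^1_\e}\big)u^1_\e(nT)
\]
needs $H^\eta$ bounds on $u^1_\e(nT)$ uniformly in $n$. These hold only for $\e$ small, through the same damping mechanism, and not by the mechanism you describe.
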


\paragraph{Previous literature and related results}
The present work naturally extends \citep{canizotassi2024}, which focused on kernels with finite second moments. An equation closely related to \eqref{DFFP} equation, called  Discrete Fractional Fokker-Planck equation, is investigated in \citep{mischler_uniform_2017} as an approximation to equation \eqref{eq: FFP}. However, their kernel $J^s$ is not genuinely heavy-tailed, being instead a truncated version that exhibits heavy-tail behaviour only in the $\e$ limit. In contrast, our work employs the fractional diffusion operator approximation developed in \citep{AndreuVaillo2010}, which features an authentically heavy-tailed convolution kernel. 

There are several results on
similar equations, of which we give a short summary now, but we
highlight that nonlocal-to-local limits which preserve the asymptotic
behaviour in the limit are comparatively rare, and are a current source
of interest in several contexts.

As already stated, the approximation of the fractional heat equation  PDE $\partial_t u = J^s*u - u$ is studied in \citep{AndreuVaillo2010}. For $s=1$, the same PDE is studied in
\citep{Rey2013}, with a focus on its large-time behaviour. In this case
the motivation comes from both physics (since this PDE is thought to
be a more appropriate diffusion model in some cases) and numerical
analysis.
Various  variants have been studied, including
 nonlinear versions \citep{ignat2007nonlocal}, models for collective behaviour \citep{auricchio2023}, and gene expression  \citep{cai2006stochastic}, with the latter analysed via entropy methods in \citep{Canizo2018b}. 
For what it concerns the asymptotic stability of the numerical schemes we refer to 
\citep{ayi_structure-preserving_2022, dujardin2020coercivity, cances2020large, bessemoulin2020hypocoercivity}.

 Similar questions also arise in kinetic theory where various techniques for studying asymptotic behaviour have emerged. This includes spectral methods \citep{Gualdani2018}, hypocoercivity techniques \citep{villanihypo, dolbeauthypo}, Harris's theorem \citep{canizo2020hypocoercivity}, and entropy methods \citep{lods2008relaxation, Carrillo2007, bisi2015entropy}. We emphasise that Harris-based techniques appear particularly well-suited for establishing correct asymptotic behaviour in parameter limit regimes.

\paragraph{Plan of the paper}
In section \ref{sec: preliminar} we first gather some well established results on the fractional Laplacian and in general on the fractional diffusion equation. We also define and state the main properties of the stable laws, which are a fundamental object of our study. After that we provide three examples of families of kernels that fulfil the hypotheses.
In the second part of this section we turn our attention to the well-posedness of equation \eqref{DFFP} and we give two representations of its solution, one using Fourier transform and the other via Wild's sums which will be crucial in our asymptotic analysis.
In the last part of the section we prove the uniform--in--$s$ generalised Berry-Esseen central limit theorem.

Section \ref{sec: asymptotic} is the core of the paper, where we prove Theorem \ref{thm: main1}.
We start by giving a brief introduction on Harris's theorem which requires two conditions: Lyapunov confinement condition and a uniform positivity condition. To get a result which is stable under the scaling parameters, those conditions have to be proven uniformly in the parameter $s$ and $\e$.
The first one  is contained in section \ref{subsec: lyap}.
The technique is standard, although extra care is needed to ensure that the estimates remain uniform in $s$.
To prove the second condition, we follow the same strategy employed in our previous paper \citep{canizotassi2024}: using the Berry-Esseen theorem we establish a uniform lower bound for the solution expressed in the Wild sums formulation. 
In \ref{subsec: shape}, we briefly give some results on the regularity of the equilibrium.

In section \ref{sec: epsilon}, we investigate the non--singular--to--singular limit ($\e\to0$), of the solution of \ref{DFFP}, providing the proof of Theorem \ref{thm: main2}. This limit corresponds, in the case $s = 1$, to the  convergence from a nonlocal to a local regime studied in \citep[Sec.4]{canizotassi2024}. 

The strategy to achieve this is as follows:

\begin{enumerate}[(i)]
    \item \textbf{Consistency of the Generator.} We begin by establishing the  consistency of the operator $A_\e^s$ with the fractional Laplacian $-(-\Delta)^s$, formalised in Proposition \ref{prop: consistencyEPS}.
    This means that, when applied to sufficiently regular 
    test functions, the generator of the equation \eqref{DFFP} converges to that of the fractional Fokker--Planck equation \eqref{eq: FFP}.
    It is natural to analyse this rates of convergence in the $L^2$ space. However, since our main goal involves convergence in the weighted $L^1_k$ space, we will make use of interpolation inequalities to transfer the result. This step inevitably leads to a loss in the sharpness of the rate of convergence.

    \item \textbf{Stability Estimate and Finite-Time Convergence.} A key ingredient in the analysis is a stability estimate, given in Proposition \ref{prop: stabilityEPS}.
     This result ensures that the convergence of the operators implies convergence of the associated solutions over finite time intervals---Theorem~\ref{thm: convfinitetimeEPS}.

    \item \textbf{Convergence of Steady States.} Thanks to the existence of a spectral gap, and by Hille--Yosida theorem, we are able to transfer the operator-level convergence to the level of stationary solutions. This result, which quantifies the convergence of equilibrium states as $\e \to 0$, is stated in Theorem~\ref{thm: conveqEPS}.

    \item \textbf{Uniform--in--Time Convergence.} Finally, the combination of spectral gaps and convergence of the steady states leads to convergence also for large times ---stated in Theorem \ref{thm: convunifEPS}.
    This result, along with the already established convergence for finite times gives  \emph{uniform--in--time convergence} of solutions. 
\end{enumerate}

In section \ref{sec: s} we give the proof to Theorem \ref{thm: main3}, namely the behaviour of the solution of the equation as $s\to 1$ and quantitative convergence estimates to the solution of equation \eqref{eq: NLFP}.
The strategy  mirrors  the one employed in the previous section.

Again, as in the previous limit, the consistency result is much more natural in $L^2$. This is reflected into a non-optimal rate of convergence due to interpolation.


\section{Preliminaries}\label{sec: preliminar}
Throughout this paper, the Fourier transform of $f$ is denoted by either  $\mathcal{F}[f]$ or $\hat{f}$ and defined as
$$
\hat{f}(\xi)=\ird e^{-ix\cdot \xi} f(x)\d x 
$$
\subsection{Fractional Calculus}
Let us recall some useful definitions and results on fractional calculus. 
\paragraph{Fractional Laplacian}
    In  recent years, the fractional Laplacian has received much attention because it provides a rich and interesting framework  for addressing nonlocal diffusion problems. It would be impossible to give a complete bibliography but we will give an overview for a reader unfamiliar with these  problems.

\begin{definizione}    
For any $u\in \mathcal{S}$, the Schwartz space $\mathcal{S}$ of rapidly decreasing functions, the fractional operator $(-\Delta)^s$, is defined  as
    \begin{equation}\label{def: FLI}
        -(-\Delta)^su(x)=C_{d,s}\pv\ird \frac{u(y)-u(x)}{|x-y|^{d+2s}}\d y
    \end{equation}
    where the integral is to be understood in the principal value sense, and  $C_{d,s}$ is a normalizing dimensional constant
    $$C_{d,s}=\frac{2^{2s}\Gamma\left(\frac{d}{2}+s\right)}{\pi^{d/2}\abs{\Gamma(-s)}}.$$
     Observe that if $s\in(0,1/2)$, we don't need to take the principal value since the singularity at the origin cancels out and the integral converges.
     This definition can be extended to the space of tempered distributions $\mathcal{S}'$, see \citep{di_nezza_hitchhikers_2012}.
\end{definizione}

    Different fractional operators have been considered in the literature, arising from different problems---see e.g. \citep{caputo1967linear}, \citep{oberhettinger2013tabellen} and \citep{servadei2014spectrum}.
    
     We also define, for sake of completeness, the fractional Sobolev spaces $W^{s,p}$, a natural extension of Sobolev spaces.
     \begin{equation}\label{def: fracsob}
         W^{s,p}(\Omega):=\Big\{u\in L^p(\Omega):  [u]_{W^{s,p}}:=\left(\int\int_{\Omega\times\Omega} \frac{|u(x)-u(y)|^p}{|x-y|^{d+ps}}\d x\d y\right)^{1/p}<\infty
         \Big\}.
     \end{equation}
     For $s\in(0,1)$, $ W^{s,p}$ is an intermediate Banach space between $L^p$ and $W^{1,p}$ when endowed with its natural norm
     \begin{equation}\label{def: fracsobnorm}
      \norm{u}^p_{W^{s,p}(\Omega)}:=\int_\Omega |u|^p\d x+[u]_{W^{s,p}}^p.
     \end{equation}
     $[u]_{W^{s,p}}$ is sometimes called Gagliardo seminorm.
     
     It has been proved in 
     \citep{BourgainBrezisMironescu}
     that 
     $$\lim_{s\to 1^-}(1-s)[u]_{W^{s,p}}^p=C_1\ird|\nabla u|^p= C_1[u]^p_{W^{1,p}}$$
     for a suitable constant $C_1$ depending only on $d$ and $p$.     
  When $p=2$, then  $W^{s,2}(\Omega):=H^{s}(\Omega)$ turns out to be a Hilbert space for every $s\in(0,1)$.

     Another equivalent formulation of \eqref{def: FLI} in terms of Fourier transform is the following
\begin{definizione}    
        Let $(-\Delta)^s:\mathscr{S}\to L^2$ be the fractional Laplacian operator. Then for every $u\in \mathscr{S}$,
    \begin{equation*}
        (-\Delta)^s u=\mathcal{F}^{-1}(|\xi|^{2s} \mathcal{F}(u))\qquad s\in(0,1)
    \end{equation*}

\end{definizione}

    Moreover, $H^{s}(\R^d)$ coincides with $$\hat{H}^{s}:=\Big\{u\in L^2(\R^d):\ird(1+|\xi|^{2s})|\hat{u}(\xi)|^2\d\xi<\infty\Big\}$$ 
     We refer to \citep{di_nezza_hitchhikers_2012} and references therein for a survey on these spaces as well as on the fractional Laplacian operator. 

   The fractional Laplacian recovers the classical Laplacian in the limit $s \to 1^-$. It is well-known that $(-\Delta)^s \phi \to -\Delta \phi$ pointwise for $\phi \in C_c^2(\mathbb{R}^d)$. Our analysis requires, however, a quantitative estimate in $L^2$. Using the Fourier representation, one can show that for $\phi \in H^\eta(\mathbb{R}^d)$ with $\eta > 2$ there exists a constant $C > 0$ independent of $s$ such that:
\begin{equation}\label{eq: consistency-FL}
\norm{(-\Delta)^s \phi - (-\Delta) \phi }_{L^2} \le C(1-s) \norm{\phi}_{H^\eta}.
\end{equation}

     \paragraph{Fractional diffusion and Fractional Fokker-Planck}
     
  The basic parabolic equation involving the fractional Laplacian is the fractional diffusion equation, sometimes called the fractional heat equation:
$$
\partial_t f = -(-\Delta)^s f.
$$

An interesting parallel between the classical heat equation and its fractional counterpart is highlighted, for instance, in \citep{bucurvaldinoci2016}, from a probabilistic point of view. Indeed, the fractional heat equation can be seen as the scaling limit of a stochastic process where particles move randomly and can perform arbitrarily long jumps, in contrast to the local behaviour of Brownian motion.

Performing the change of variables already mentioned in the introduction,
$$
v^s(t,x) = e^{dt} f\left( \frac{e^{2st} - 1}{2s}, e^t x \right),
$$
it is easy to verify that $v^s$ solves the fractional Fokker-Planck equation \eqref{eq: FFP}.

We now introduce the following definitions.
\begin{definizione}\label{def_ infinitDiv}
A probability measure $\rho$ is {infinitely divisible} if, for all $n \in \mathbb{N}$, there exists another probability measure $\rho_n$ such that 
\begin{equation*}
\rho = \underbrace{\rho_n*\dots*\rho_n}_{n \text{ times}}
\end{equation*}
\end{definizione}
A well-known and important example of infinitely divisible distributions are the stable laws, and as a particular case, the symmetric ones.
\begin{definizione}\label{def: stable}
For $s \in (0,1]$, we denote by $G^s$ the standard symmetric stable law of order $2s$, whose Fourier transform is given by
$$
\widehat{G}^s(\xi) = e^{-|\xi|^{2s}}.
$$
To represent stable laws with arbitrary scale, we introduce the notation $G^{s;\gamma}(x)$, whose Fourier transform is
$$
\widehat{G}^{s;\gamma}(\xi) = e^{-\gamma |\xi|^{2s}}.
$$
\end{definizione}

In particular, if $\gamma = s= 1$, $G^1$  is a Gaussian distribution with covariance matrix $2\Id$, where $\Id$ is the $d\times d$ identity  matrix. We  write $G := G^1$ when no ambiguity arises.

Stable laws have many interesting properties---see for example \citep{nolan_stable_2020}. The most relevant ones to us are the following:

\begin{proposizione}\label{prop: propSL}
The family of symmetric $2s$‑stable laws $G^{s;\gamma}$ defined by
$$
\widehat{G}^{s;\gamma}(\xi)=e^{-\gamma\,|\xi|^{2s}}, 
\quad s\in(0,1], \gamma>0,
$$
enjoys the following properties:
\begin{enumerate}[(i)]
\item\emph{Scaling.}
  $$
    G^{s;\gamma}(x)=  G_{\gamma^{1/2s}}^s(x)
  $$
  \item \emph{Sum of stable random variables:} If two independent random variables have density following a $2s$ stable distribution, then also their sum will follow a $2s$ stable distribution. Moreover, the stable distributions are the only ones with this property.
   As a consequence, one has
  $$
    G^{s;\gamma_1} * G^{s;\gamma_2}
    = 
    G^{s;\gamma_1+\gamma_2}, 
    \quad \forall\,\gamma_1,\gamma_2>0.
  $$
  \item Convergence in the fractional index: it can be proved that 
  $$
  G^s\to G\qquad s\to 1
  $$
  in several senses. In particular, we have the following quantitative convergence result in $L^1_k$: for some $\alpha>0$
  \begin{equation}\label{eq: stabletogauss}
      \norm{G^s-G}_{L^1_k}\le C(1-s)^\alpha.
  \end{equation}

  \item \emph{Heavy-tail asymptotics:}
  $$
    G^s(x) \sim \frac{K_{d,s{}}}{|x|^{d + 2s}}\qquad \text{ as } |x| \to \infty
  $$
  where $$K_{d,s}= \frac{s2^{2s}\Gamma(d/2+s)\sin(s\pi)}{\pi^{d/2+1}} $$
 and when $s\sim1$ then $K_{d,s}\sim (1-s)$. This  behaviour of $G^s$ provides can be also seen in \eqref{eq: tailpointwise}.
 
  \item \emph{Radial symmetry and positivity:}
  each $G^{s;\gamma}(x)$ is a nonnegative, radially symmetric, smooth probability density on $\R^d$.
\end{enumerate}
\end{proposizione}
Similar to the standard Fokker-Planck, the following theorem holds.
\begin{teorema}\label{thm: propFFP}
	Let $v_0\in L^1$. Then
    $$
    v^s(t, x) = (v_0 \circ e^t) * G^{s,\sigma(t)}(x)$$
where $\sigma(t) = \frac{1-e^{-2st}}{2s}$ and $\circ$ denotes the composition. In particular, $H^\infty$ regularity and strict positivity  follow from the properties of the stable laws.
\end{teorema}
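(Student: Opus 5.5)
The plan is to work in Fourier variables and then undo the transform. Let $f(\tau,y)$ solve the fractional heat equation $\partial_\tau f=-(-\Delta)^s f$ with $f(0)=v_0$. Its solution is $f(\tau)=v_0*G^{s;\tau}$, since $\hat f(\tau,\xi)=e^{-\tau|\xi|^{2s}}\hat v_0(\xi)$ and Definition \ref{def: stable} identifies this multiplier with $\widehat{G}^{s;\tau}$.

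\textbf{Change of variables.} As already observed after the fractional heat equation, $v^s(t,x)=e^{dt}f\big(\tfrac{e^{2st}-1}{2s},e^tx\big)$ solves \eqref{eq: FFP}, and $v^s(0)=v_0$. To make this self-contained I would check it in Fourier. The equation \eqref{eq: FFP} becomes
\[
\partial_t\hat v=-|\xi|^{2s}\hat v-\xi\cdot\nabla_\xi\hat v .
\]
This is a first-order transport equation. Along the characteristics $\xi(t)=\xi_0e^{t}$ it reads $\frac{\d}{\d t}\hat v(t,\xi(t))=-|\xi_0|^{2s}e^{2st}\,\hat v(t,\xi(t))$. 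Integrating gives
\[
\hat v(t,\xi)=\hat v_0(e^{-t}\xi)\exp\Big(-|\xi|^{2s}\,\tfrac{1-e^{-2st}}{2s}\Big)=\hat v_0(e^{-t}\xi)\,e^{-\sigma(t)|\xi|^{2s}} .
\]
Uniqueness for the linear transport equation, in the class of tempered distributions with $L^\infty$ Fourier transform, identifies this with the solution. I take $v_0\in L^1$, so $\hat v_0$ is bounded and continuous.

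\textbf{Inversion.} Next I read the two factors back in physical space.
\begin{itemize}
\item $\hat v_0(e^{-t}\xi)$ is the Fourier transform of $x\mapsto e^{dt}v_0(e^tx)$, which is mass-preserving. This is how I interpret the notation $v_0\circ e^t$, with the normalising factor $e^{dt}$ understood.
\item $e^{-\sigma(t)|\xi|^{2s}}=\widehat{G}^{s;\sigma(t)}$.
\end{itemize}
The convolution theorem then gives the formula, with $\sigma(t)=\frac{1-e^{-2st}}{2s}$.

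\textbf{Consequences.} Strict positivity follows from two facts. The kernel $G^{s;\sigma(t)}$ is smooth and strictly positive by Proposition \ref{prop: propSL}(v), together with the scaling property (i) and the tail asymptotics (iv); radial decreasing profiles with power tails are positive everywhere. Convolving with a nonnegative, nontrivial $v_0$ therefore gives a strictly positive function for $t>0$.

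$H^\infty$ regularity comes from the bound
\[
|\hat v(t,\xi)|\le\|v_0\|_{L^1}e^{-\sigma(t)|\xi|^{2s}} .
\]
For $t>0$ we have $\sigma(t)>0$, so every polynomial weight $(1+|\xi|^2)^{m}|\hat v|^2$ is integrable.

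\textbf{Main obstacle.} The only delicate point is the uniqueness and well-posedness framework for general $L^1$ data. I would handle it in the Fourier transport formulation, where the characteristic method yields uniqueness directly. This also settles the matching of the normalisation $e^{dt}$ in $v_0\circ e^t$.
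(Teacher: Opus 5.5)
Your proposal is correct and follows essentially the route the paper has in mind: the paper states the result without proof, as a consequence of the self-similar change of variables from the fractional heat equation, and your characteristics computation is just the case $\zeta_\e/\e^{2s}\to-|\xi|^{2s}$ of its Fourier formula \eqref{eq: solfourier-fractional}. You are also right that $v_0\circ e^t$ has to be read as the mass-preserving rescaling $e^{dt}v_0(e^t\cdot)$. For strict positivity you rely on radial monotonicity of $G^{s;\gamma}$, which is not among the properties listed in Proposition \ref{prop: propSL}; it is standard, though, and by subordination $G^{s;\gamma}$ is a mixture of centred Gaussians, which gives strict positivity directly.
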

Speed of convergence towards the equilibrium has been studied via representation in \cite{vazquez2017asymptoticbehaviourfractionalheat}, via entropy methods in \citep{gentil_levy-fokker-planck_2006}, \citep{bilerKarch_2003_generalizedFP}, and \citep{lafleche_fractional_2020},  or via operator splitting in \citep{tristani2013fractional}. In particular for the latter, the author proved
\begin{equation}\label{eq: SG-FFP}
    \norm{v^s(t,\cdot)-\tilde{G}^s}_{L^1_k}\le Ce^{-\lambda t}\norm{u_0-\tilde{G}^s}_{L^1_k}
    \end{equation}
 
Combining \eqref{eq: SG-FFP} with an argument of consistency of the operator \eqref{eq: consistency-FL} and stability, plus an interpolation inequality, in the spirit of Section \ref{sec: s}, one obtains uniform--in--time convergence.
\begin{teorema}\label{thm: unifconvFL}
Let $1\ge M>m>k$ and let  $v^s$ and $v$ denote the solutions of equations \eqref{eq: FFP} and \eqref{eq:FP}, respectively, with the same initial data $v_0\in H^{\eta}\cap L^1_M$, for $\eta>2$. Then, there exists a constant $C$ and $\alpha>0$, both independent of $s$, such that  
    \begin{equation*}
    \norm{v^s(t)-v(t)}_{L^1_k}\le C(1-s)^\alpha\qquad \forall t\in[0,\infty).
\end{equation*}
\end{teorema}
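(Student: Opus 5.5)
Since both solutions have explicit Mehler-type representations, I would compare them directly rather than through the abstract consistency--stability machinery. The plan is to split time into the regimes $t\le T$ and $t\ge T$, and to choose $T$ depending on $(1-s)$ at the end.

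\emph{Large times.} By Theorem \ref{thm: propFFP}, $v^s(t)\to \tilde G^s$, where $\tilde G^s=G^{s;1/(2s)}$ is the equilibrium of \eqref{eq: FFP}. Estimate \eqref{eq: SG-FFP} controls $\norm{v^s(t)-\tilde G^s}_{L^1_k}$, and I would take the constants $C,\lambda$ independent of $s$. If that uniformity is not explicit in \citep{tristani2013fractional}, I would obtain it from Harris's theorem applied to the semigroup, in the same spirit as Theorem \ref{thm: main1}. The analogous estimate for \eqref{eq:FP} holds with the Gaussian equilibrium $\tilde G=G^{1;1/2}$. Then
$$\norm{v^s(t)-v(t)}_{L^1_k}\le C e^{-\lambda t}+\norm{\tilde G^s-\tilde G}_{L^1_k}.$$
The last term is $O((1-s)^\alpha)$ by the scaling property (i) and by \eqref{eq: stabletogauss}, since the scale parameter $(2s)^{-1/(2s)}$ depends smoothly on $s$ and differs from its value at $s=1$ by $O(1-s)$.

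\emph{Finite times.} Write $v^s(t)-v(t)=(v_0\circ e^t)*(G^{s;\sigma_s(t)}-G^{1;\sigma_1(t)})$, so that
$$\norm{v^s(t)-v(t)}_{L^1_k}\le C\norm{v_0\circ e^t}_{L^1_k}\,\norm{G^{s;\sigma_s}-G^{1;\sigma_1}}_{L^1_k},$$
using $\ap{x+y}^k\le \ap{x}^k\ap{y}^k$. The first factor is bounded by $e^{dt}\norm{v_0}_{L^1_k}$ up to the dilation normalisation, or, more carefully, by a constant since the mass is preserved and the weight only shrinks. For the kernel factor I split
$$\norm{G^{s;\sigma_s}-G^{s;\sigma_1}}+\norm{G^{s;\sigma_1}-G^{1;\sigma_1}}.$$
By scaling, the second term equals $\norm{G^s-G}_{L^1_k}$ rescaled by $\sigma_1^{1/2}$. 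The weight $\ap{x}^k$ with $k\le 1$ changes by at most a bounded factor, which gives $C(1-s)^\alpha$. The first term is controlled by $|\sigma_s-\sigma_1|\lesssim(1-s)$ together with the Lipschitz dependence of $G^{s;\gamma}$ on $\gamma$ in $L^1_k$. This bound is uniform for $\gamma$ bounded away from zero. For small $t$ both scales tend to $0$, and the difference of dilations of $G^s$ is handled by writing $G^{s;\gamma}$ as a dilation: the $L^1$ norm is scale invariant, and for $\gamma\le1$ the weight only helps.

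Combining the two regimes gives the bound for all $t$, with $\alpha$ possibly reduced. The alternative route announced in the paper, namely consistency \eqref{eq: consistency-FL} in $L^2$, a Duhamel stability estimate, and interpolation between $L^2$ and $L^1_M$ to reach $L^1_k$, is where the hypothesis $v_0\in H^\eta\cap L^1_M$ enters. That route loses an exponent $\theta=\frac{m-k}{m+d/2}$. In the explicit approach above, the only delicate point is to check that the constant in \eqref{eq: stabletogauss} and the tail moments of $G^s$ in $L^1_k$ stay uniform as $s\to1$. This holds because $k\le1<2s$ and $K_{d,s}\sim(1-s)$.

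The main obstacle I expect is the uniformity in $s$ of the spectral gap constant in \eqref{eq: SG-FFP}, which is needed in the large-time regime. I would first try to read it off from Harris's theorem, using the uniform Lyapunov function $\ap{x}^k$ and positivity inherited from the explicit stable kernel.
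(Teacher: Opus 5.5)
Your large-time half is sound and matches what the paper does (a uniform \eqref{eq: SG-FFP} plus \eqref{eq: stabletogauss}). The gap is in the finite-time half, at the claim that $\norm{G^{s;\sigma_1}-G^{1;\sigma_1}}_{L^1_k}$ is a rescaling of $\norm{G^s-G}_{L^1_k}$.

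By the scaling property, $G^{s;\gamma}$ is the dilation of $G^s$ by $\gamma^{1/(2s)}$, while $G^{1;\gamma}$ is the dilation of $G$ by $\gamma^{1/2}$. These two factors differ by $\gamma^{(1-s)/(2s)}$, which is far from $1$ when $\gamma$ is small. Using the dilation invariance of the $L^1$ norm one gets exactly $\norm{G^{s;\gamma}-G^{1;\gamma}}_{L^1}=\norm{G^{s;\gamma^{1-s}}-G}_{L^1}$. For every fixed $s<1$ this tends to $2$ as $\gamma\to0$, because $G^{s;\gamma^{1-s}}$ collapses to a Dirac mass. Since $\sigma_1(t)\sim t$, the kernel factor $\norm{G^{s;\sigma_s(t)}-G^{1;\sigma_1(t)}}_{L^1_k}$ is of order one on the window $t\lesssim e^{-c/(1-s)}$. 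Its supremum in $t$ is therefore not $O((1-s)^\alpha)$. The first piece of your split is fine: $\abs{\log(\sigma_s/\sigma_1)}\le (1-s)/s$ uniformly in $t$, so scale invariance does handle it.

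This is not a bookkeeping slip. No estimate in which $v_0$ enters only through $\norm{v_0}_{L^1_k}$ can work, because the statement is false for rough data. If $v_0$ is an approximate Dirac mass, then $v^s(t)-v(t)$ is $L^1$-close to $G^{s;\sigma_s(t)}-G^{1;\sigma_1(t)}$, whose norm is close to $2$ for small $t$. Your remark that $H^\eta$ only enters in the ``alternative route'' is the symptom: the regularity of $v_0$ is indispensable near $t=0$.

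In the paper, regularity enters through the consistency estimate \eqref{eq: consistency-FL}, applied to $v(\tau)\in H^\eta$ inside a Duhamel/stability argument. This is followed by interpolation between $L^2$ and $L^1_M$, which gives a bound $CT(1-s)^\theta$ on $[0,T]$; large times are then handled as you do.

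Your explicit approach can be repaired in the same spirit:
\begin{enumerate}[(i)]
\item For $t\le\tau$, compare both $v^s(t)$ and $v(t)$ with $e^{dt}v_0(e^t\cdot)$. Use an $L^1_k$ modulus of continuity of translations of $v_0$, obtained from $\nabla v_0\in L^2$ and $v_0\in L^1_M$ by the same interpolation trick. This costs $O(\tau^\beta)$ for some $\beta>0$.
\item For $t\ge\tau$, the relative scale $r=\sigma_s^{1/(2s)}/\sigma_1^{1/2}$ satisfies $\abs{\log r}\lesssim(1-s)(1+\abs{\log\tau})$, so your kernel bound goes through.
\item Then take $\tau=1-s$.
\end{enumerate}
Note that step (ii) covers all $t\ge\tau$ at once, so the uniform spectral gap you flagged as the main obstacle would not even be needed.
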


\begin{osservazione}
    Theorem \ref{thm: unifconvFL} and Equations \eqref{eq: SG-FFP} and \eqref{eq: stabletogauss} are three of the arrows in Figure \ref{diagram}.
\end{osservazione}

\subsection{Examples of kernels}\label{sec: examples} 

\begin{enumerate}
\item A standard example of a family satisfying Hypotheses \ref{ass: sunif} and \ref{ass: remainderest} is given by the family of stable laws, namely $$\hat{G}^s(x)=e^{-|\xi|^{2s}},$$ 
as shown in the following lemma.
\begin{lemma}\label{lemma: Ass2Stable}
The family $J^s=G^s$ satisfies Hypotheses \ref{ass: sunif} and \ref{ass: remainderest}, with $J^1=G$.
\end{lemma}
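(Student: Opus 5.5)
We verify the three items of Hypothesis \ref{ass: sunif} and then Hypothesis \ref{ass: remainderest} for $J^s=G^s$, $J^1=G$. Proposition \ref{prop: propSL}(v) gives that each $G^s$ is a smooth, bounded probability density. Hence $G^s\in L^1\cap L^\infty$.

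\emph{Integrability (i).} Since $G^s$ is a probability density, $\min\{1,|y|^2\}\le 1$ gives \eqref{eq: levydensity} at once.

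\emph{Low frequency (ii).} Write $e^{-r}=1-r+R(r)$ with $|R(r)|\le r^2/2$ for $r\ge0$. Taking $r=|\xi|^{2s}$, we get $R_s(\xi)=e^{-|\xi|^{2s}}-1+|\xi|^{2s}$ and $|R_s(\xi)|\le \frac12|\xi|^{4s}$. For $|\xi|\le1$ and $s>1/2$ we have $|\xi|^{4s}=|\xi|^{2s}|\xi|^{2s}\le|\xi|^{2s+1}$. So $\delta=1$ and $C_0=1/2$ work, uniformly in $s$.

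\emph{Tail control (iii).} This is the delicate step, because the constant must be $O(1-s)$ uniformly as $s\to1$. Proposition \ref{prop: propSL}(iv) gives only an asymptotic with $K_{d,s}\sim(1-s)$, not a uniform bound. The plan is to use the subordination representation
\[
G^s(x)=\int_0^\infty g_s(\tau)\,(4\pi\tau)^{-d/2}e^{-|x|^2/4\tau}\d\tau,
\]
where $g_s$ is the one-sided $s$-stable density. Known bounds give $g_s(\tau)\le C(1-s)\tau^{-1-s}$ for $\tau\ge1$, uniformly for $s\in(1/2,1)$; this is where the $(1-s)$ factor, i.e.\ $\sin(\pi s)/\pi$, enters. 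Split the $\tau$-integral at $\tau=1$:
\begin{itemize}
\item The part $\tau\ge1$ is bounded by $C(1-s)\int_0^\infty\tau^{-1-s-d/2}e^{-|x|^2/4\tau}\d\tau=C'(1-s)|x|^{-d-2s}$.
\item The part $\tau\le1$ is at most $\sup_{\tau\le1}(4\pi\tau)^{-d/2}e^{-|x|^2/4\tau}$. For $|x|\ge R$ this is bounded by $Ce^{-|x|^2/8}=:\Psi(x)$, which is radial (so it has zero first moment) and lies in $L^1_2$.
\end{itemize}
Alternatively, one could get (iii) from the explicit series expansion of $G^s$ at infinity, whose coefficients contain $\sin(\pi s k)$, but controlling the remainder of that series uniformly seems harder.

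\emph{Hypothesis \ref{ass: remainderest}.} The Gaussian $G$ satisfies \eqref{eq: expansion_uniform} by the same argument as in (ii). Set $\phi(r)=(1-e^{-r})/r$, so the quantity to bound is $|\phi(|\xi|^{2s})-\phi(|\xi|^2)|$. By the mean value theorem in the exponent $a\in[2s,2]$,
\[
\Bigl|\phi(|\xi|^{2s})-\phi(|\xi|^2)\Bigr|\le 2(1-s)\sup_{a\in[2s,2]}\bigl|\phi'(|\xi|^a)\bigr|\,|\xi|^a\,\bigl|\log|\xi|\bigr|.
\]
Since $r\phi'(r)$ is bounded, with $|r\phi'(r)|\le Cr$ for small $r$, we treat two regimes.
\begin{itemize}
\item For $|\xi|\le1$ the bound is $C(1-s)|\xi|^{2s}|\log|\xi||\le C(1-s)|\xi|^{\delta}|\log|\xi||$, using $\delta=1<2s$.
\item For $|\xi|\ge1$ the bound is $C(1-s)\log|\xi|\le C(1-s)|\xi|^\delta\log|\xi|$.
\end{itemize}
This gives the claimed estimate with $\max(1,|\log|\xi||)$ on the right-hand side.
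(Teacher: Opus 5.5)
Your check of (ii) is the paper's Taylor argument with $\delta=1$. Item (i), which the paper leaves implicit, is trivial as you say. For Hypothesis~\ref{ass: remainderest} your route differs from the paper's and is stronger. The paper only uses that $\phi$ is $\tfrac12$-Lipschitz, which gives $(1-s)|\xi|^{2s}|\log|\xi||(1+|\xi|^{2(1-s)})$. That has the required shape for $|\xi|\le 1$, but for large $|\xi|$ it grows like $(1-s)|\xi|^{2}\log|\xi|$, so it does not give $\mathfrak{C}(1-s)|\xi|^{\delta}\max(1,|\log|\xi||)$ with $\delta=1$ there. Your mean value argument in the exponent, combined with $|r\phi'(r)|=|e^{-r}-\phi(r)|\le\min\{1,r/2\}$, handles both regimes. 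The paper never checks the tail condition (iii), and you are right that it is the delicate point.

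\textbf{The gap.} The step that fails is the claimed uniform bound $g_s(\tau)\le C(1-s)\tau^{-1-s}$ for $\tau\ge1$.
\begin{itemize}
\item As $s\to1$, $g_s$ concentrates at $\tau=1$, and the mass it leaves on $[1,\infty)$ is not $O(1-s)$.
\item To see this, view the subordinator as a sum of Poisson jumps with L\'evy density $\frac{s}{\Gamma(1-s)}x^{-1-s}$, and set $h=(1-s)\log\frac{1}{1-s}$.
\item The jumps smaller than $h$ have mean $1-h+o(h)$ and standard deviation of order $(1-s)\sqrt{\log\frac{1}{1-s}}=o(h)$, so they sum to $1-h+o(h)$ with high probability.
\item Independently, a single jump larger than $2h$, which has probability $\approx 1/(2\log\frac{1}{1-s})$, pushes the total past $1$.
\item Hence $\int_1^\infty g_s\d\tau\gtrsim 1/\log\frac{1}{1-s}$, whereas your bound would force $\int_1^\infty g_s\d\tau\le C(1-s)/s$.
\end{itemize}

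\textbf{The fix} is to split at $\tau=2$ instead of $\tau=1$.
\begin{itemize}
\item For $\tau\ge2$, use the classical series $g_s(\tau)=\frac1\pi\sum_{k\ge1}\frac{(-1)^{k+1}}{k!}\Gamma(ks+1)\sin(k\pi s)\tau^{-ks-1}$, which converges for every $\tau>0$ when $s<1$. With $|\sin(k\pi s)|=|\sin(k\pi(1-s))|\le k\pi(1-s)$ and $\Gamma(ks+1)\le k!$ (valid since $ks+1>3/2$), this gives $g_s(\tau)\le(1-s)\tau^{-1-s}\sum_{k\ge1}k\,2^{-(k-1)/2}$, uniformly in $s$.
\item For $\tau\le2$, note that $\tau\mapsto(4\pi\tau)^{-d/2}e^{-|x|^2/4\tau}$ is increasing on $(0,2]$ once $|x|^2\ge4d$. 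So that piece is at most $\Psi(x)=(8\pi)^{-d/2}e^{-|x|^2/8}$.
\end{itemize}
With this change your subordination argument proves (iii), and together with the rest of your proposal it establishes the lemma more completely than the paper's own proof.
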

\begin{proof}
By a straightforward Taylor expansion, and since $s\in(1/2,1)$, Hypothesis \ref{ass: sunif} is satisfied with $\delta=1$.

		Define the function $\phi:\R^+\to \R^+$, 
		$$\phi(u)=\frac{e^{-u}-1}{u},$$
		and observe that $\phi$ is Lipschitz of constant $1/2$. Therefore
		\begin{multline*}				\abs{\frac{\hat{J}^s(\xi)-1}{|\xi|^{2s}}-\frac{\hat{J}(\xi)-1}{|\xi|^2}}=|\phi(|\xi|^{2s})-\phi(|\xi|^2)|\le \frac{1}{2}\abs{|\xi|^{2s}-|\xi|^2}\\
			=\frac{1}{2}|\xi|^{2s}|1-|\xi|^{2(1-s)}|\le (1-s)|\xi|^{2s}\abs{\log(|\xi|)}(1+|\xi|^{2(1-s)}),\end{multline*}
		since 
		$$
		|1-|x|^a|\le a\abs{\log(|x|)}(1+|x|^a)
		$$		
\end{proof}
\item 
Another family satisfying the hypotheses is given by the fundamental solution of the fractional screened Poisson equation,
\begin{equation}\label{eq: screenedpoiss}
[(-\Delta)^s+\lambda^2]J^s=f
\end{equation}
with $\lambda=1$ and $f=\delta_0$. The solution can be written as 
$$
\hat{J}^s(\xi)=\frac{1}{1+|\xi|^{2s}}=\sum_{n\ge0}(-1)^n|\xi|^{n2s}
$$
$J^s$ converges to the fundamental solution of the screened Poisson equation  $J^1$ whose Fourier transform is $\frac{1}{1+|\xi|^2}$.
For example  $J^1(x)=\frac{1}{2}e^{-|x|}$ in $1$ dimension.
\medskip

\item 
In general, a heavy tailed kernel satisfying
$$J^s(x)\sim |x|^{-d-2s}$$ 
fulfils Hypothesis \ref{assJ} but not \ref{ass: sunif}.

In order to recover \ref{ass: sunif}, one must suitably tune the constants and add a rapidly decaying ``core'' function in order to cancel the second-order term in the Fourier expansion, as shown in the following example.
\begin{lemma}
    Let $\f_1(x)$ be a Student's t--distribution   of parameter $\nu=2s$.
    $$
    \f_1(x)=\frac{\Gamma\left(s+\frac{d}{2}\right)}{\Gamma(s)(2s\pi)^{d/2}}\left(1+\frac{|x|^2}{2s}\right)^{-\left(\frac{d}{2}+s\right)}.
    $$    
    Define $$a(s)=-\frac{\Gamma(s)2^{2s}}{\Gamma(-s)(2s)^s}>0.$$    
    Let      $\f_2$ be a Gaussian with variance $2\gamma$ where $$\gamma=\frac{1}{2}\frac{a(s)}{1-s}\frac{s}{1-a(s)}.$$
    Observe that $a(s)\sim 2(1-s)$  and $\gamma\sim 1$, when $s\sim 1$ 

    Then the function 
     \begin{equation}
        \label{eq: sharpApprox}
    J^s(x)=a(s)\f_1(x)+(1-a(s))\f_2(x)
     \end{equation}
    satisfies Hypotheses \ref{ass: sunif}  and \ref{ass: remainderest}.
\end{lemma}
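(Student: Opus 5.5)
The plan is to compute $\hat J^s$ explicitly, expand it at low frequency, and check that the choice of $\gamma$ cancels the $|\xi|^2$ term exactly. Every hypothesis then reduces to bounds on convergent series whose coefficients stay bounded as $s\to1$.

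\textbf{Fourier transform and low-frequency expansion.} I would first write the Fourier transform of the Student density. With $z=\sqrt{2s}\,|\xi|$,
$$\hat\f_1(\xi)=\frac{2^{1-s}}{\Gamma(s)}\,z^{s}K_s(z),$$
where $K_s$ is the modified Bessel function. For non-integer $s$ I would use $K_s=\frac{\pi}{2\sin(s\pi)}(I_{-s}-I_s)$ together with the power series of $I_{\pm s}$. This gives, for $z\le\sqrt2$,
$$\hat\f_1(\xi)=1+\frac{z^2}{4(1-s)}+\frac{\Gamma(-s)}{\Gamma(s)4^{s}}z^{2s}+\sum_{k\ge2}c_k(s)z^{2k}+\sum_{k\ge1}b_k(s)z^{2s+2k}.$$
In terms of $\xi$, the $|\xi|^{2s}$ coefficient is $\Gamma(-s)(2s)^s/(\Gamma(s)4^s)=-1/a(s)$. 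Hence $a(s)\hat\f_1(\xi)=a-|\xi|^{2s}+\frac{a s}{2(1-s)}|\xi|^2+\dots$. Meanwhile
$$(1-a)\hat\f_2(\xi)=(1-a)\bigl(1-\gamma|\xi|^2+O(\gamma^2|\xi|^4)\bigr).$$
The definition of $\gamma$ is exactly $(1-a)\gamma=\frac{a s}{2(1-s)}$, so the quadratic terms cancel and
$$\hat J^s(\xi)=1-|\xi|^{2s}+R_s(\xi).$$

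\textbf{Bounding the remainder uniformly in $s$.} $R_s$ consists of the terms $a\,c_k z^{2k}$ ($k\ge2$), $a\,b_k z^{2s+2k}$ and the Gaussian tail. The prefactors are controlled as follows:
\begin{itemize}
\item In $c_k$ the only factors blowing up as $s\to1$ are $1/(1-s)$ from $\Gamma(k-s+1)$ in $I_{-s}$, and these are cancelled by $a(s)\sim2(1-s)$.
\item The ratio $\pi/\sin(s\pi)$ is absorbed in the same way, since it multiplies $1/\Gamma(1-s)$.
\item $\gamma\to1$, so $\gamma$ is bounded.
\end{itemize}
So for $|\xi|\le1$ every term is bounded by $C|\xi|^{\min(4,2s+2)}\le C|\xi|^{2s+1}$, which gives (ii) of Hypothesis \ref{ass: sunif} with $\delta=1$. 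I would make the uniformity precise by bounding the series termwise with factorial decay that is uniform for $s\in(1/2,1)$.

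\textbf{Remaining parts of Hypothesis \ref{ass: sunif}.}
\begin{itemize}
\item $J^s\in L^1\cap L^\infty$ is clear, and (i) holds trivially since $J^s$ is a probability density.
\item For (iii), $a\f_1(x)\le a(s)C|x|^{-d-2s}\le C(1-s)|x|^{-d-2s}$. Because $\gamma\in[\gamma_{\min},\gamma_{\max}]$, the Gaussian part is bounded by $\Psi(x)=Ce^{-c|x|^2}$, which is radial, lies in $L^1_2$ and is independent of $s$.
\end{itemize}

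\textbf{Hypothesis \ref{ass: remainderest}.} The limit is $J^1=\f_2$ with $\gamma=1$, i.e.\ $\hat J^1=e^{-|\xi|^2}$, which satisfies \eqref{eq: expansion_uniform} by Lemma \ref{lemma: Ass2Stable}. I split the required estimate into two regions.
\begin{itemize}
\item \emph{Region $|\xi|\le1$.} Write both quotients as $1-R_s/|\xi|^{2s}$ versus $1-R_1/|\xi|^2$. The difference is then controlled by $s$-derivatives of the series coefficients, each $O(1)$, combined with $|\,|\xi|^{a}-|\xi|^{b}|\le|a-b||\log|\xi||(\dots)$ exactly as in Lemma \ref{lemma: Ass2Stable}. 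This also uses $|a(s)-2(1-s)|\le C(1-s)^2$ and $|\gamma-1|\le C(1-s)$.
\item \emph{Region $|\xi|\ge1$.} Here $|\hat J^s|\le1$ and $\hat\f_1$ decays. I bound the difference by $|\xi|^{-2s}\,|\hat J^s-\hat J^1|+|1-\hat J^1|\,\bigl||\xi|^{-2s}-|\xi|^{-2}\bigr|$. The second term is $\le C(1-s)\log|\xi|$. For the first, $|\hat J^s-\hat J^1|\le a|\hat\f_1|+|a|+|e^{-\gamma|\xi|^2}-e^{-|\xi|^2}|\le C(1-s)$.
\end{itemize}

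I expect the main obstacle to be the uniform-in-$s$ control of the Bessel series coefficients near $s=1$, where the individual terms of $I_{-s}$ and $\pi/\sin(s\pi)$ blow up and only their combination with $a(s)$ stays bounded.
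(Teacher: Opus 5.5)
Your route is the paper's: expand $\hat\varphi_1$ through the Bessel series and use $(1-a)\gamma=\frac{as}{2(1-s)}$ to cancel the $|\xi|^2$ term. Your coefficient bookkeeping near $s=1$ and your two-region treatment of Hypothesis~\ref{ass: remainderest} go further than the paper, which leaves these checks undone. The step that fails is the claim that $\gamma$ is bounded, i.e.\ $\gamma\in[\gamma_{\min},\gamma_{\max}]$ for all $s\in(1/2,1)$. At $s=1/2$ the Student law is the Cauchy law, $\hat\varphi_1=e^{-|\xi|}$, so $a(1/2)=1$ (directly: $2\Gamma(1/2)/(-\Gamma(-1/2))=1$). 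Hence $1-a(s)\to0$ and $\gamma\to+\infty$ as $s\downarrow1/2$, while $(1-a)\gamma$ stays bounded and tends to $1/2$. This breaks two of your steps:
\begin{itemize}
\item The Gaussian part of the remainder, $(1-a)\bigl(e^{-\gamma r^2}-1+\gamma r^2\bigr)$ with $r=|\xi|$, has $r^4$-coefficient $(1-a)\gamma^2/2\sim\gamma/4\to\infty$. So your termwise bound $C|\xi|^{\min(4,2s+2)}$ with $C$ independent of $s$ is false.
\item The tails $e^{-|x|^2/(4\gamma)}$ of $(1-a)\varphi_2$ cannot be dominated by any fixed Gaussian $\Psi$, so your check of (iii) also fails.
\end{itemize}

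The statement survives, but you need a separate argument near $s=1/2$.
\begin{itemize}
\item \emph{Condition (ii).} Use $0\le e^{-x}-1+x\le\min\{x,x^2/2\}$ to get $(1-a)\bigl(e^{-\gamma r^2}-1+\gamma r^2\bigr)\le C\min\{r^2,\gamma r^4\}$. Its ratio to $r^{2s+1}$ on $r\le1$ is at most $C\max\{1,\gamma^{s-1/2}\}$. This is bounded: $a'(1/2)\approx-1.54\neq0$ gives $1-a(s)\gtrsim s-\tfrac12$, hence $\gamma\lesssim(s-\tfrac12)^{-1}$ and $\gamma^{s-1/2}\to1$. Testing at $r=\gamma^{-1/2}$ shows $\delta=1$ is the largest uniform exponent, so there is no slack.
\item \emph{Condition (iii).} On $(1/2,s_0]$ you have $(1-a)\varphi_2\lesssim\gamma^{-1-d/2}e^{-|x|^2/(4\gamma)}\lesssim|x|^{-d-2}\le|x|^{-d-2s}$ for $|x|\ge1$. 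This is absorbed into $C(1-s)|x|^{-d-2s}$ because $1-s\ge1-s_0$ there. On $[s_0,1)$, $\gamma$ does lie in a compact subset of $(0,\infty)$ and your Gaussian $\Psi$ works.
\item \emph{Hypothesis~\ref{ass: remainderest}.} The same split is needed. Your bounds $|\gamma-1|\le C(1-s)$ and $O(1)$ $s$-derivatives hold only on $[s_0,1)$. On $(1/2,s_0]$ the hypothesis is just a uniform bound, which follows from (ii) for $|\xi|\le1$ and from $|1-\hat J|\le2$ for $|\xi|\ge1$.
\end{itemize}
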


\begin{proof}
Let's write $J^s(x)=c_1\f (x)+c_2\f_2(x)$.
The Fourier transform  of $\f_1$ is known to be
$$
\hat{f}_1(\xi)=\frac{2^{1-s}}{\Gamma(s)}(\sqrt{2s}|\xi|)^sK_s(\sqrt{2s}|\xi|)
$$
where $K_s$ is the modified Bessel function of the second kind which is defined as 
$$
K_\nu(x)=\frac{\pi}{2}\frac{I_{-\nu}(z)-I_\nu(z)}{\sin(\nu\pi)}
$$
and  $I_\nu$ is the modified Bessel function of the first kind which admits the series expansion
$$
I_\nu(z)=\left(\frac{z}{2}\right)^\nu\sum_{k=0}^\infty
\frac{1}{k!\Gamma(\nu+k+1)}\left(\frac{z^2}{2}\right)^k$$

 To show that Hypothesis \ref{ass: sunif} is fulfil, we expand $\hat{f_1}$, by using the properties of the $\Gamma$ function, obtaining
$$
\hat{\f}_1(\xi)=1-A(s)|\xi|^{2s}+B(s)|\xi|^2+O(|\xi|^{2s+2})
$$
with $A(s)=-\frac{\Gamma(-s)(2s)^s}{\Gamma(s)2^{2s}}>0$ and 
    $B(s)=\frac{s}{2(1-s)}>0$.

    Notice that $A(s)=-a(s)^{-1}$
The expansion of the Gaussian is 
$$
\hat{\f}(\xi)=1-\gamma|\xi|^2+O(|\xi|^4)
$$
Thus
\begin{equation*}
    \hat{J^s}(\xi)=c_1\hat{\f}_1(\xi)+c_2\f_2(\xi)=
    [c_1+c_2]+[-c_1A(s)]|\xi|^{2s}+[c_1B(s)-\gamma c_2]|\xi|^2
\end{equation*}
The three conditions to be satisfied are the following 
\begin{enumerate}[(i)]
    \item $c_1+c_2=1$
    \item $-c_1A(s)=-1$
    \item $c_1B(s)-\gamma c_2(s)=0$
\end{enumerate}
From $(ii)$ and using the asymptotic properties of the $\Gamma$ function 
we have
$$
c_1=A(s)^{-1}=a(s)\qquad\text{and } \qquad c_1\sim 2(1-s) \text{ as }s\sim 1
$$
From $(i)$, then 
$$
c_2=1-a(s)\qquad\text{and } \qquad c_2\sim 2s-1 \text{ as }s\sim  1
$$
We have to check that $c_2(s)>0$. This happens  as long $|A(s)|>1$, which is easy to prove if $s\in(1/2,1).$

Finally to cancel the term corresponding to $|\xi|^{2}$ one can  choose
$$
\gamma=\frac{a(s)}{2(1-s)}\frac{s}{1-a(s)}\qquad\text{and } \qquad \gamma\sim 1 \text{ as }s \sim  1
$$
to obtain the first claim.

By considering $\f_1$ at the next  orders of the expansion, one can show that $(J^s)$ also satisfies  Hypothesis \ref{ass: remainderest} with $J^1(x)=\lim_{s\to 1}J^s(x)=G^1(x)$ but we do not provide here the details.

\end{proof}

\begin{osservazione}
   The function defined in \eqref{eq: sharpApprox} provides a sharper approximation of both the stable law and the fundamental solution of \eqref{eq: screenedpoiss}, which do not admit explicit closed-form expressions. This construction also yields a more accurate approximation of the fractional Laplacian than the pure stable law, and more robust in the limit $s\to 1$.
\end{osservazione}
\end{enumerate}

\subsection{Well-Posedness of the Initial Value Problem}
We collect here some basic well-posedness results for the Cauchy problem with fixed $\e$: existence and uniqueness of mild solutions, as well as continuous dependence on initial data, follow from standard contraction arguments. We also provide representations of the solution in Fourier variables and in terms of Wild sums. Since the proof closely follows classical methods and is, \textit{mutatis mutandis}, the same as in \citep{canizotassi2024}, we only outline the main steps and refer to that work for full details.
Let us define the semigroup $T_t:L^1_k\to L^1_k$
\begin{equation}
  \label{eq:St}
  T_t[u_0] (x) \equiv T_t u_0(x) := e^{(d-1/\e^{2s})t} u_0(e^t x)
  \qquad \text{for $t \ge 0$, $x \in \R^d$,}
\end{equation}
solution in the sense of semigroup of the PDE
\begin{equation*}
  \p_t u = \div(xu)-\frac{1}{\e^{2s}} u, \qquad u(0,x) = u_0(x).
\end{equation*}

\begin{definizione}[Mild solution]
  \label{dfn:mild}
  Let $0\le k<2s$, and take $J$ satisfying Hypothesis
  \eqref{ass: sunif} on $J$ and $J\in L^1_k$ with an initial data
  $u_0 \in L^1_k(\R^d)$. For a fixed time $T^* \in (0,+\infty]$.  A \emph{mild
    solution} of equation \eqref{DFFP} on $[0,T^*)$ with initial
  condition $u_0$ is a function $u \in C([0,T^*), L^1_k(\R^d))$ such
  that
  \begin{equation}\label{mild1}
    u(t) = T_t u_0 +  \frac{1}{\e^{2s}}\int_0^t T_{t-s}
    \big[ J_\e*u(s)\big] \d s
    \qquad
    \text{for all $t \in I$,}
  \end{equation}
\end{definizione}
Existence and uniqueness of such solutions can be proved via standard contraction arguments.
\begin{teorema}[Well-posedness]
  \label{thm:wellposed-fractional}
  Let $k \in(0,1)$ and let $(J^s)$ be a family of kernels satisfying Hypothesis \ref{ass: sunif}. Then, for all $s\in(1/2,1)$ and for every initial condition $u_0 \in L^1_k$, the  equation \eqref{DFFP}
  $$
  \partial_t u=\frac{1}{\e^{2 s}}(J_\e*u-u) + \div(xu)
  $$
  admits a unique mild solution $u \in \mathcal{C}([0,\infty), L^1_k)$ in the sense of Definition \ref{dfn:mild}. Moreover, the solution depends continuously on the initial data: for any two mild solutions $u$ and $v$ with respective initial data $u_0$ and $v_0$, we have
  $$
  \norm{u(t)-v(t)}_{L^1_k} \le e^{\frac{t}{\e^{2 s}}(c_k-1)} \norm{u_0-v_0}_{L^1_k},
  $$
  where $c_k=2^k\norm{J}_{L^1_k}$.
\end{teorema}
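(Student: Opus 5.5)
We will treat the mild formulation \eqref{mild1} as a fixed-point equation and use Banach's contraction principle in $C([0,T],L^1_k)$ on a short interval. The result is then extended to $[0,\infty)$ because the contraction time does not depend on the initial datum. The continuous-dependence estimate will come from a Gronwall argument on the Duhamel formula.

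\textbf{Step 1: bounds on $T_t$ and on convolution with $J_\e$.} First we record how $T_t$ acts on weighted norms. By the change of variables $y=e^tx$,
\[
\norm{T_tu_0}_{L^1_k}=e^{-t/\e^{2s}}\ird \ap{e^{-t}y}^k|u_0(y)|\d y\le e^{-t/\e^{2s}}\norm{u_0}_{L^1_k},
\]
since $\ap{e^{-t}y}\le\ap{y}$ for $t\ge0$. The same substitution shows that $t\mapsto T_tu_0$ is strongly continuous in $L^1_k$: this holds for $C_c$ data and passes to all of $L^1_k$ by density.

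Next we bound the convolution. Peetre's inequality $\ap{x}^k\le 2^{k}\ap{x-y}^k\ap{y}^k$ (valid for $k\le1$, using $\ap{x}^k\le\ap{x-y}^k+|y|^k$) gives $\norm{J_\e*u}_{L^1_k}\le 2^k\norm{J_\e}_{L^1_k}\norm{u}_{L^1_k}$. Moreover $\norm{J_\e}_{L^1_k}=\ird\ap{\e y}^kJ(y)\d y\le\norm{J}_{L^1_k}$ for $\e\le1$. This norm is finite because $k<1<2s$ and the tail bound \eqref{eq: tailpointwise} makes $|x|^kJ^s$ integrable; $\Psi\in L^1_2$ handles the remainder. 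Hence $\norm{J_\e*u}_{L^1_k}\le c_k\norm{u}_{L^1_k}$.

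\textbf{Step 2: contraction.} Define $\Phi[u](t)=T_tu_0+\e^{-2s}\int_0^tT_{t-\tau}[J_\e*u(\tau)]\d\tau$. By Step 1, $\Phi$ maps $C([0,T],L^1_k)$ into itself. Continuity in $t$ follows from strong continuity of $T$ and dominated convergence. For two functions $u,v$,
\[
\sup_{[0,T]}\norm{\Phi[u]-\Phi[v]}_{L^1_k}\le \e^{-2s}c_kT\sup_{[0,T]}\norm{u-v}_{L^1_k}.
\]
Choosing $T=\e^{2s}/(2c_k)$ makes $\Phi$ a contraction, so a unique fixed point exists on $[0,T]$. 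Since $T$ does not depend on $u_0$, we restart from $u(T)$ and glue the pieces using the semigroup property of $T_t$. This yields a unique global mild solution $u\in C([0,\infty),L^1_k)$.

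\textbf{Step 3: continuous dependence.} Let $w=u-v$. By linearity, $w$ is the mild solution with datum $u_0-v_0$. We set $g(t)=e^{t/\e^{2s}}\norm{w(t)}_{L^1_k}$. Step 1 then gives
\[
g(t)\le \norm{u_0-v_0}_{L^1_k}+\frac{c_k}{\e^{2s}}\int_0^t g(\tau)\d\tau .
\]
Gronwall's lemma yields $g(t)\le e^{c_kt/\e^{2s}}\norm{u_0-v_0}_{L^1_k}$, which is exactly the stated bound with exponent $\frac{t}{\e^{2s}}(c_k-1)$.

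The main obstacle is modest: checking that $\norm{J}_{L^1_k}$ is finite uniformly in $s$. This needs $k<2s$, guaranteed by $k<1$, together with the tail bound $J^s\lesssim(1-s)|x|^{-d-2s}+\Psi$. The near-origin part is controlled by $\norm{J^s}_{L^1}=1$.
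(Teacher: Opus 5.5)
Your proposal is correct and follows essentially the same route as the paper. Both arguments take a fixed point of the Duhamel map, using the contractivity of $T_t$ in $L^1_k$ and the weighted Young bound $\norm{J_\e*u}_{L^1_k}\le c_k\norm{u}_{L^1_k}$, and then apply Gronwall to $e^{t/\e^{2s}}\norm{u(t)-v(t)}_{L^1_k}$. The paper iterates the map to get the factor $(c_kT^*/\e^{2s})^n/n!$, whereas you restart on intervals of length $\e^{2s}/(2c_k)$; these are interchangeable. One small nit: the subadditivity $\ap{x}^k\le\ap{x-y}^k+|y|^k$ only yields the constant $2$ in Peetre's inequality, so to get $2^k$ write $\ap{x}\le\ap{x-y}+|y|\le 2\max\{\ap{x-y},\ap{y}\}$ and raise to the power $k$.
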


\begin{proof}[Sketch of Proof]
  The argument follows the same steps as in \citep{canizotassi2024}. Define the space $\Y := \mathcal{C}([0,T^*], L^1_k)$ and the operator
  $$
  \Psi[u](t) := T_t u_0 + \frac{1}{\e^{2 s
}} \int_0^t T_{t-s}(J_\e * u(s)) \, ds,
  $$
  where $T_t$ is the semigroup associated with the transport part. Using the same weighted Young inequality and the contractivity of $T_t$ in $L^1_k$---Lemma 2.3 and 2.4 of \citep{canizotassi2024}, we obtain
  $$
  \norm{\Psi[u](t) - \Psi[v](t)}_{L^1_k} \leq \frac{c_k}{\e^{2 s}} \int_0^t \|u(s) - v(s)\|_{L^1_k} \, ds.
  $$
  Iterating $\Psi$
   \begin{equation*}
     \norm{\Psi^n[u](t)-\Psi^n[v](t)}_{\Y}
     \le
     \Big(\frac{c_kT^*}{\e^{2s}}\Big)^n\frac{1}{n!}\|u-v\|_{\Y}.
   \end{equation*}
  yields a contraction for small $T^*$ (or equivalently larger $n$), and a fixed point argument gives existence and uniqueness. The continuity estimate follows by applying Gronwall’s inequality to the difference of two solutions, using
  $$
  \|u(t) - v(t)\|_{L^1_k} \leq e^{-t/\e^2 s} \|u_0 - v_0\|_{L^1_k} + \frac{c_k}{\e^{2 s}} \int_0^t e^{-\frac{t-s}{\e^{2s}}} \|u(s) - v(s)\|_{L^1_k} \, ds.
  $$
\end{proof}
It is possible to prove existence and uniqueness for measures. Let $\M$ be the space of finite signed Radon measures and let $\norm{\cdot}_{\M_k}$ denote the weighted total variation norm. Then a mild solution exists and it is unique in the space $\mathcal{Z}=C([0,T^*], \X_R)$, where the continuity has to be intended in Bounded Lipschitz norm $\X_R:=\{\mu\in\M:||\mu||_{\M_k}\le R\}$.

\begin{osservazione}
The operator $L_\e^s$ can be decomposed as $L_\e^s = L_1 + L_2$, where $L_1 u = \e^{-2s}(J_\e * u)$ is a bounded linear operator on $L^1_k$, and $L_2u = \div(xu) - \e^{-2s}u$. Since $L_2$ generates a strongly continuous contraction semigroup $T_t$ on $L^1_k$ (given by \eqref{eq:St}), the bounded perturbation theorem for semigroups \citep{EngelNagel2001} ensures that $L_\e^s$ also generates a $C_0$-semigroup, which justifies the use of mild solutions in Definition \ref{dfn:mild}.
\end{osservazione}

\subsection{Representation of the solutions}
\paragraph{Fourier.}
We now study the equation in Fourier variables. Although this is not the main point of the paper, this representation 
allows 
 to obtain closed-form expressions for $\hat{u}(t,\xi)$ and the unique stationary state, assuming that it exists. In particular, this will be used at the end of next section to gather some information on the regularity of the equilibrium.

Assume that the initial data $u_0$ is a probability distribution in $L^1$. The solution of \eqref{DFFP}  must satisfy the explicit formula in Fourier variables:
\begin{equation}\label{eq: solfourier-fractional}
  \hat{u}(t,\xi) = \hat{u}_0(e^{-t} \xi) \exp\left( \frac{1}{\e^{2s}} \int_0^t \zeta_\e(e^{-\tau} \xi) \, \d \tau \right),
\end{equation}
where $\zeta_\e(\xi) := \hat{J}_\e(\xi) - 1$.
Moreover, there exists at most one equilibrium $F_\e \in L^1$ for this equation, and its Fourier transform must satisfy
\begin{equation}\label{eq: eqfourier-fractional}
  \hat{F}_\e(\xi) = \exp\left( \frac{1}{\e^{2s}} \int_0^\infty \zeta_\e(e^{-\tau} \xi) \, \d \tau \right).
\end{equation}

Indeed, applying the Fourier transform to the equation yields
$$
\partial_t \hat{u} = \frac{1}{\e^{2s}} \zeta_\e(\xi) \hat{u} - \xi \cdot \nabla_\xi \hat{u},
$$
which is a linear first-order PDE in $\hat{u}$ with known characteristics. The unique solution to this equation is given by \eqref{eq: solfourier-fractional}.

To characterise equilibria, note that a stationary solution must satisfy
$$
\hat{F}_\e(\xi) = \hat{F}_\e(e^{-r} \xi) \exp\left( \frac{1}{\e^{2s}} \int_0^r \zeta_\e(e^{-\tau } \xi) \, \d \tau \right), \quad \text{for all } r \geq 0.
$$
Since $\|F_\e\|_{L^1} = 1$, we have $\hat{F}_\e(0) = 1$, and taking the limit as $r \to \infty$ yields
$$
\hat{F}_\e(\xi) = \lim_{r \to \infty} \hat{F}_\e(e^{-r} \xi) \exp\left( \frac{1}{\e^{2s}} \int_0^r \zeta_\e(e^{-\tau} \xi) \, \d \tau \right) = \exp\left( \frac{1}{\e^{2s}} \int_0^\infty \zeta_\e(e^{-\tau} \xi) \, \d \tau \right).
$$

\paragraph{Wild Sums.} When dealing with linear convolution-type
operators, Wild sums---or Dyson-Phillips  series  ---are a particularly simple way to express the solution.

\begin{teorema}\label{Thm:wild}
Let $u$ be the unique solution to Equation \eqref{DFFP} with $u_0$ as initial datum. 
Then under the hypotheses of Theorem \ref{thm:wellposed-fractional}
\begin{equation}
     u(t,x)=e^{(d-\frac{1}{\e^{2s}})t}\Big[u_0(e^tx)+\sum_{n=1}^\infty \Big(\frac{1}{\e^{2s}}\Big)^n\int_0^t\int_0^{t_{n}}\dots\int_0^{t_{2}}\Jn*u_0(e^tx) \d t_1\dots \d t_n.\Big]
\end{equation}
where
\begin{equation}\label{jn}
    \Jn:=J_{\e e^{t_n}}*\cdots*J_{\e e^{t_1}}(x).
\end{equation}
with $0\le t_1\le\dots\le t_n\le t$.
\end{teorema}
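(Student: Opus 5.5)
The plan is to derive the Wild sum by iterating the mild formulation \eqref{mild1} (Dyson--Phillips expansion), using the explicit transport semigroup \eqref{eq:St} and the way it interacts with convolution by $J_\e$.

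\textbf{Step 1: a commutation identity.} For $f\in L^1_k$ and $\tau\ge 0$ we have
$$
T_\tau[J_\e*f](x)=e^{(d-\e^{-2s})\tau}\ird J_\e(e^\tau x-y)f(y)\d y .
$$
Substituting $y=e^\tau z$ gives
$$
T_\tau[J_\e * f]=J_{\e e^{-\tau}}*T_\tau f ,
$$
because $e^{d\tau}J_\e(e^\tau w)=J_{\e e^{-\tau}}(w)$ by the definition $J_\e=\e^{-d}J(\cdot/\e)$. The same computation gives the dual form
$$
(J_{a}*g)(e^t\,\cdot)=e^{-dt}\big(J_{ae^{-t}}*g(e^t\cdot)\big)\cdot e^{dt},
$$
that is, rescaling the argument of a convolution turns $J_a$ into $J_{ae^{-t}}$ and leaves the prefactor unchanged.

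\textbf{Step 2: iterate the mild formula.} Define the $n$-th term
$$
u^{(0)}(t)=T_tu_0, \qquad u^{(n)}(t)=\e^{-2s}\int_0^t T_{t-\tau}\big[J_\e*u^{(n-1)}(\tau)\big]\d\tau .
$$
We prove by induction that $u^{(n)}(t,x)$ equals the $n$-th term of the claimed sum. Plugging in the inductive expression for $u^{(n-1)}(\tau)$ and applying $T_{t-\tau}$ does three things:

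\begin{enumerate}[(i)]
\item it multiplies the exponential prefactors, $e^{(d-\e^{-2s})(t-\tau)}e^{(d-\e^{-2s})\tau}=e^{(d-\e^{-2s})t}$;
\item it converts each previous kernel $J_{\e e^{t_i}}$ into the kernel appropriate to the evaluation point $e^t x$;
\item it brings the new kernel $J_\e$, acting at time $\tau$, to the scale $J_{\e e^{\tau}}$ once everything is written as a function of $e^tx$ convolved with $u_0$.
\end{enumerate}

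Renaming $\tau=t_n$ and ordering $0\le t_1\le\dots\le t_n\le t$ yields exactly $\Jn*u_0(e^tx)$ integrated over the simplex. The main bookkeeping obstacle is getting the scaling indices right: whether the kernel appears as $J_{\e e^{t_i}}$ or $J_{\e e^{-t_i}}$ depends on whether one writes the sum as a function of $x$ or of $e^tx$. I would fix the convention by checking the case $n=1$ explicitly first. The Fourier formula \eqref{eq: solfourier-fractional} can serve as a cross-check, since it predicts factors $\hat J(\e e^{-\tau}\xi)$ in the variable $\xi$ at which $\hat u_0(e^{-t}\xi)$ is evaluated.

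\textbf{Step 3: convergence and identification.} Using the weighted Young inequality and the bound $\norm{T_t}\le e^{-t/\e^{2s}}$ on $L^1_k$, we get
$$
\norm{u^{(n)}(t)}_{L^1_k}\le e^{-t\e^{-2s}}\frac{(c_kt\e^{-2s})^n}{n!}\norm{u_0}_{L^1_k}.
$$
Hence the series converges in $C([0,T],L^1_k)$ for every $T$. Summing the recursion shows that the sum $\sum_n u^{(n)}$ satisfies \eqref{mild1}, and by the uniqueness part of Theorem \ref{thm:wellposed-fractional} it coincides with $u$.
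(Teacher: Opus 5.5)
Your proposal is correct and follows the paper's route. Both arguments use a Dyson--Phillips expansion of the mild formula around the transport semigroup $T_t$, together with the commutation identity $J_\e*(T_\tau g)=T_\tau[J_{\e e^{\tau}}*g]$; this is your Step~1 with $a=\e e^{\tau}$, and the paper imports it as Lemma~2.6 of \citep{canizotassi2024}. Your Step~3 re-proves by hand the convergence and the identification with the unique mild solution, which the paper takes from the abstract Dyson--Phillips theorem in \citep{EngelNagel2001}.

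This identity also settles your bookkeeping worry. Since $T_{t-\tau}T_\tau=T_t$, the earlier kernels are left unchanged, contrary to your item (ii), and only the new factor $J_{\e e^{t_n}}$ is added. The indices are therefore $J_{\e e^{t_i}}$, as stated.
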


\begin{proof}

The proof is based on the classical  Dyson-Phillips expansion \citep[Ch.II, Sec.6, Thm. 1.10]{EngelNagel2001}.
Let us define by $T_t$ the semigroup generated by the operator $L_2$ defined as $L_2u=\div(xu)-\e^{2s} u$ which takes the form 
$$
T_tu_0(x)=e^{-t/\e^{2s}}e^{dt}u_0(e^t x).
$$
Then this allows us to write the solution in the Wild sum form
$$
u(t)=T_tu_0+\sum_{n\ge0}
\Big(\frac{1}{\e^{2s}}\Big)^n\int_0^t\int_0^{t_n}\cdots \int_0^tT_{t-t_n}[J_\e*T_{t_{n}-t_{{n-1}}}[J_\e\dots [J_\e*T_{t_1 }u_0]\dots]] \d t_1,\dots \d t_n
$$
Applying \citet[Lemma 2.6]{canizotassi2024} that allows to exchange the operators as 
\begin{equation*}
    J_\e*(T_tf)=T_t[J_{\e e^t}*f],
\end{equation*}
so the proof is complete.
\end{proof}

\subsection{Berry-Esseen Central Limit Theorem for heavy-tailed distributions} 
Berry-Esseen-type theorems are quantitative versions of the central limit theorem (CLT), that provide explicit estimates on the error between the distribution of properly scaled sums of random variables and the standard normal distribution.

The first version of this theorem was independently proposed by Berry \citep{berry_1941} and Esseen \citep{esseen1942liapunov}: for real, centered i.i.d.\ random variables  
$X_1, \dots, X_n$ with $\Var(X_1) = \sigma^2$ and $\E(|X_1|^3) = \rho$,  
the Berry--Esseen theorem provides a bound of the form:
$$
\sup_{x \in \R} |F_n(x) - \Phi(x)| \le \frac{C \rho}{\sqrt{n} \, \sigma^3},
$$
where $\Phi$ denotes the cumulative distribution function (c.d.f.) of the standard normal distribution, $F_n$ is the c.d.f. of the normalised sum  
$\bar{S}_n = \frac{1}{\sigma \sqrt{n}} \sum_{i=1}^n X_i$, and $C$ is a universal constant that may depend on the dimension $d$.
Efforts have been made to sharpen the bound by improving the value of the constant $C$. Recent developments in this direction can be found in \citep{bentkus_2005} and \citep{raic_2019}.

Beyond results for the cumulative distribution function, several works have established Berry--Esseen-type bounds for densities. A strengthened version was proposed in \citep{lions1995strengthened}, focusing on convergence for smooth densities. This was further studied in \citep{goudon_junca_toscani_BE_2002}, \citep{carlen_entropy_2010}, \citep{mischler_kacs_2014}, and in the author's previous paper \citep{canizotassi2024}.

Classical central limit theorems assure convergence toward the Gaussian distribution, provided that the random variables have finite variance. Berry--Esseen-type theorems give a quantitative estimate on the "distance" from the limiting distribution, assuming additional control on the $2+\delta$ moment.

When the distribution does not have finite variance, the limit law is no longer Gaussian: for instance, if $f \sim |x|^{-d-2s}$, then the sum converges to a stable law of order $2s$. This is the content of the \textit{generalised central limit theorem}. In this regime, $f$ is not in $|x|^{2s}L^1$, so the usual moment condition must be replaced, typically by something similar to Hypothesis \ref{assJ}.

In \citep[Chapter 4]{goudon_junca_toscani_BE_2002}, the authors adapt their original proof for the classical case to obtain a generalised Berry--Esseen theorem. We  show a similar result, extending our proof and \citep{canizotassi2024} to derive an estimate for non-identically distributed random variables with heavy-tailed densities. Differently from previous work, we obtain a rate of convergence that is uniform in the  index $s$.

The main result of this section is the following

\begin{teorema}[$L^\infty$ Berry-Esseen GCLT]
\label{thm: BE}
 Let $(f^s)$ be a family of density function 
    satisfying Hypothesis \ref{ass: sunif}. For $s\in(1/2,1)$, define
     $$\mathfrak{f}_n(x):=(\bar{\sigma}^{2s}n)^{d/2s}f^{\sigma_1,\dots\sigma_n}(n^{1/2s}\bar \sigma x),$$
    with  $$f^{\sigma_1,\dots\sigma_n}(x):=f^s_{\sigma_1}*f^s_{\sigma_2}*\cdots*f^s_{\sigma_n}(x)$$ and  $\bar\sigma^{2s}=\frac{1}{n}\sum_{i=1}^n\sigma_i^{2s}$. 
    
    Suppose, in addition, that there exist $l,L>0$ such that 
    $$0<l\le \sigma_i \le L\quad i=1,\dots, n$$      

Then, there exist an integer $N=N(p)$ and a constant $C_{BE} $ depending on $l,L,p, d,\rho_{2s+\delta},\norm{f_s}_{L^p}$ 
\begin{equation*}
    \norm{\Jnorm-G^s}_{L^\infty}\le \frac{C_{BE}}{n^{\delta/{2s}}}\quad \text{ for every $n\ge N$}
\end{equation*}
\end{teorema}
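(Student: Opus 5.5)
We work entirely in Fourier variables and use $\|g\|_{L^\infty}\le (2\pi)^{-d}\|\hat g\|_{L^1}$. By the scaling of the convolution,
$$\Jnormh(\xi)=\prod_{j=1}^n \hat f^s\Big(\frac{\xi\sigma_j}{n^{1/2s}\bar\sigma}\Big),\qquad \hat G^s(\xi)=e^{-|\xi|^{2s}}=\prod_{j=1}^n \hat G^s\Big(\frac{\xi\sigma_j}{n^{1/2s}\bar\sigma}\Big).$$
The second identity holds because $\sum_j\sigma_j^{2s}=n\bar\sigma^{2s}$. We split $\int|\Jnormh-\hat G^s|\,\d\xi$ into a low-frequency region $|\xi|\le \kappa n^{1/2s}$ and a high-frequency region $|\xi|>\kappa n^{1/2s}$. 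Here $\kappa>0$ is chosen small, depending only on $l,L,C_0,\delta$, so that every argument $\xi_j:=\xi\sigma_j/(n^{1/2s}\bar\sigma)$ has modulus at most $\kappa L/l\le 1$. In that range the uniform expansion \eqref{eq: expansion_uniform} applies with constants independent of $s$.

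\textbf{Low frequencies.} We use the telescoping bound
$$\Big|\prod_j a_j-\prod_j b_j\Big|\le \sum_k \prod_{j<k}|a_j|\,|a_k-b_k|\prod_{j>k}|b_j|,$$
with $a_j=\hat f^s(\xi_j)$ and $b_j=e^{-|\xi_j|^{2s}}$. Hypothesis \ref{ass: sunif} gives
$$|a_k-b_k|\le |R_s(\xi_k)|+\big|1-|\xi_k|^{2s}-e^{-|\xi_k|^{2s}}\big|\le C_0|\xi_k|^{2s+\delta}+\tfrac12|\xi_k|^{4s}\le C|\xi_k|^{2s+\delta}.$$
The last step uses $4s\ge 2s+\delta$ when $|\xi_k|\le1$, which is allowed after replacing $\delta$ by $\min(\delta,1)$. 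For $\kappa$ small we also have $|a_j|\le 1-\tfrac12|\xi_j|^{2s}\le e^{-|\xi_j|^{2s}/2}$ and $|b_j|\le e^{-|\xi_j|^{2s}/2}$. Since $\sum_j|\xi_j|^{2s}=|\xi|^{2s}$, the product of the two exponential factors is at most $e^{-|\xi|^{2s}/2}e^{|\xi_k|^{2s}/2}\le C e^{-|\xi|^{2s}/2}$. Moreover
$$\sum_k|\xi_k|^{2s+\delta}\le n\,(L/l)^{2s+\delta}\,|\xi|^{2s+\delta}\,n^{-(2s+\delta)/2s}=C\,n^{-\delta/2s}|\xi|^{2s+\delta}.$$
Integrating $|\xi|^{2s+\delta}e^{-|\xi|^{2s}/2}$ over $\R^d$ gives a constant that is uniform for $s\in(1/2,1)$, since $2s\ge1$. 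This yields the claimed rate $n^{-\delta/2s}$.

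\textbf{High frequencies.} The Gaussian-type part is trivial: $\int_{|\xi|>\kappa n^{1/2s}}e^{-|\xi|^{2s}}\,\d\xi$ decays like $e^{-c n}$ uniformly in $s$. For $\Jnormh$ we need two facts.
\begin{itemize}
\item[(a)] A uniform non-lattice bound: $\sup_{|\eta|\ge \kappa l/L}|\hat f^s(\eta)|\le \beta<1$ with $\beta$ independent of $s$.
\item[(b)] Integrability: $\hat f^s\in L^{p'}$ with $\|\hat f^s\|_{L^{p'}}\le \|f^s\|_{L^p}$ by Hausdorff–Young, which requires $p\le2$. If $p>2$, we first replace $f^s$ by a fixed number of self-convolutions, which lie in $L^2$ by Young's inequality.
\end{itemize}
We then bound $|\Jnormh|\le \beta^{\,n-N}\prod_{j\le N}|\hat f^s(\xi_j)|$ and apply Hölder to the remaining $N=N(p)$ factors, choosing $N\ge p'$. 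After the change of variables $\xi\mapsto\xi_j$, which costs a factor $(n^{1/2s}\bar\sigma/\sigma_j)^{d}$, this gives a bound $C n^{d/2s}\beta^{n-N}$, which is $o(n^{-\delta/2s})$. This is where $N(p)$ and the dependence on $\|f^s\|_{L^p}$ enter.

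\textbf{Main obstacle: uniformity in (a).} For a single fixed density, $|\hat f|<1$ away from $0$ and $\hat f\to0$ at infinity by Riemann–Lebesgue, so (a) holds. Uniformly in $s$, I would argue as follows. On $|\eta|\le1$ the expansion already gives $|\hat f^s(\eta)|\le 1-\tfrac12\kappa'^{2s}$ up to the $C_0$ error, after shrinking $\kappa$. For $|\eta|\ge1$ I would use the uniform tail bound \eqref{eq: tailpointwise} together with the uniform $L^p$ bound. Together these give tightness and equi-integrability of $\{f^s\}$, so the family is relatively compact in $L^1$ along any sequence $s\to s_0\in[1/2,1]$. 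A contradiction argument then works: if $|\hat f^{s_k}(\eta_k)|\to1$, pass to a limit density $f^*$, which satisfies $|\hat f^*(\eta^*)|=1$ for some finite $\eta^*\neq0$. This is impossible for an absolutely continuous law. If instead $|\eta_k|\to\infty$, equi-integrability gives uniform Riemann–Lebesgue decay, again a contradiction.

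The constants obtained therefore depend only on $l,L,p,d,\delta,C_0$, the tail data and $\sup_s\|f^s\|_{L^p}$, as the statement asserts.
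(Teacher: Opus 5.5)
Your architecture is the paper's proof: Fourier inversion, the split at $|\xi|\sim n^{1/2s}$, Lemma \ref{lemma: produttoria} with exponential bounds on the factors, and H\"older plus Hausdorff--Young on $N\ge p'$ factors times $\beta^{n-N}$ at high frequencies. Your bound on $|\hat f^s-\hat G^s|$ via $0\le e^{-x}-1+x\le x^2/2$ is more explicit than the paper's, and for a single fixed $s$ the argument is complete. The problem is your justification of the $s$-uniform non-lattice bound (a), which the paper takes from \citep[Prop.~26]{carlen_entropy_2010} through Lemma \ref{lemfou}(ii). Tightness and equi-integrability give only \emph{weak} relative compactness in $L^1$, and this does not imply uniform Riemann--Lebesgue decay. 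For instance, in $d=1$ the densities $g_k=\frac{1}{2\pi}(1+\cos kx)$ on $[0,2\pi]$ (zero elsewhere) are bounded by $1/\pi$ and supported in a fixed interval, yet $\hat g_k(k)=\frac12$ for every $k\in\N$. So the case $|\eta_k|\to\infty$ of your contradiction argument is unsupported as written. The bounded case is fine: weak convergence plus the equicontinuity of $\hat f^{s_k}$ coming from tightness gives $|\hat f^*(\eta^*)|=1$, which is impossible for a density. Note also that the expansion alone controls $|\hat f^s|$ only where $C_0|\eta|^\delta\le\frac12$, not on all of $|\eta|\le 1$.

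The missing idea is non-concentration rather than decay, and it handles all $|\eta|\ge r$ at once without any compactness.
\begin{itemize}
\item Suppose $|\hat f^s(\eta)|\ge 1-\tau$. Choose $\alpha$ with $\int(1-\cos(x\cdot\eta-\alpha))f^s(x)\d x=1-|\hat f^s(\eta)|\le\tau$.
\item By Markov's inequality, at least $1-\sqrt\tau$ of the mass lies in $S=\{1-\cos(x\cdot\eta-\alpha)\le\sqrt\tau\}$. This set is a family of parallel slabs of width $\lesssim\tau^{1/4}/|\eta|$ and spacing $2\pi/|\eta|$, so $|S\cap B_M|\le C(d,M,r)\,\tau^{1/4}$ for all $|\eta|\ge r$.
\item H\"older then gives $1-\sqrt\tau-\sup_s\int_{|x|>M}f^s\le \sup_s\|f^s\|_{L^p}\,(C\tau^{1/4})^{1/p'}$. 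This fails once $M$ is fixed large using \eqref{eq: tailpointwise} and $\tau$ is then taken small.
\end{itemize}
This yields $\beta<1$ depending only on $r,d,p$, $\sup_s\|f^s\|_{L^p}$ and the tail data. That is the content the paper imports from Carlen et al., who phrase it via a moment and an entropy bound; here the tightness from \eqref{eq: tailpointwise} plays the role of the moment. As you rightly note, it needs a uniform-in-$s$ $L^p$ bound, which Hypothesis \ref{ass: sunif} does not state explicitly. Finally, for $p>2$ no self-convolution trick is needed, and it would not fit the product of differently scaled factors anyway: $L^1\cap L^p\subset L^2$, so you may take $p=2$ as the paper does.
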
The proof follows the same structure as the one in \citep{canizotassi2024}. However, we decided to provide a self-contained proof for the sake of completeness.
For the rest of the section we will simply denote by $f=f_s$.
We first need some preliminary lemmas.
The
first one is a generalisation of the identity
$a^n-b^n=\sum_{j=1}^n (a-b)a^{j-1}b^{n-j}$, that can be easily proven
by induction:
\begin{lemma}\label{lemma: produttoria}
	For real $a_1,\dots,a_n$ and $b_1,\dots b_n$, for any $n\ge1$
	\begin{equation*}
		\prod_{j=1}^na_j-\prod_{j=1}^nb_j=\sum_{j=1}^n\Big[(a_j-b_j)\big(\prod_{k=1}^{j-1}b_k\big)\big(\prod_{k=j+1}^n a_k\big)\Big]
	\end{equation*}
\end{lemma}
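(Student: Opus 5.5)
We prove the identity by induction on $n$, using the telescoping trick that underlies the scalar identity $a^n-b^n=\sum_{j}(a-b)a^{j-1}b^{n-j}$.

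For $n=1$ both sides equal $a_1-b_1$: the right-hand side is $(a_1-b_1)$ times two empty products, each equal to $1$.

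Assume the identity holds for $n-1$, with $n\ge 2$. Peel off the first factor by adding and subtracting $b_1\prod_{j=2}^n a_j$:
\begin{equation*}
\prod_{j=1}^n a_j-\prod_{j=1}^n b_j=(a_1-b_1)\prod_{k=2}^n a_k+b_1\Big(\prod_{j=2}^n a_j-\prod_{j=2}^n b_j\Big).
\end{equation*}
The first term is exactly the $j=1$ summand of the claimed formula, since the product of the $b_k$ over $k<1$ is empty.

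For the second term, apply the inductive hypothesis to the $n-1$ pairs $(a_2,b_2),\dots,(a_n,b_n)$, reindexed. This gives
\begin{equation*}
\prod_{j=2}^n a_j-\prod_{j=2}^n b_j=\sum_{j=2}^n (a_j-b_j)\Big(\prod_{k=2}^{j-1}b_k\Big)\Big(\prod_{k=j+1}^n a_k\Big).
\end{equation*}
Multiplying by $b_1$ turns each $\prod_{k=2}^{j-1}b_k$ into $\prod_{k=1}^{j-1}b_k$. So the second term is precisely the sum of the summands $j=2,\dots,n$ of the claimed formula, which closes the induction.

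A direct proof is also available. Set $P_j:=\big(\prod_{k\le j}b_k\big)\big(\prod_{k>j}a_k\big)$ for $0\le j\le n$, so that $P_0=\prod_j a_j$ and $P_n=\prod_j b_j$. Then
\begin{equation*}
P_{j-1}-P_j=(a_j-b_j)\Big(\prod_{k<j}b_k\Big)\Big(\prod_{k>j}a_k\Big),
\end{equation*}
and summing over $j=1,\dots,n$ telescopes to $P_0-P_n$, which is the identity.

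Nothing in either argument uses commutativity beyond the arrangement of the factors. Hence the identity holds verbatim for complex numbers, which is the form in which it will be applied to products of Fourier transforms such as $\prod_j\JJi$ versus $\prod_j\GGi$.
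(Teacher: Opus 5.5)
Your proof is correct and follows the paper's route: the paper only remarks that the identity is easily proven by induction, and your step of peeling off the first factor (or, equivalently, telescoping the $P_j$) supplies exactly that argument. Your closing observation that the identity holds verbatim for complex numbers is apt, since the paper states the lemma for reals but applies it to complex-valued Fourier transforms.
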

\begin{lemma}\label{lemfou}
  Let $f$ be a probability distribution on $\R^d$ satisfying
  \eqref{ass: sunif}. Then
  \begin{enumerate}[(i)]
  \item  There exists $r\in (0,1)$ depending on $\delta$ such that 
    \begin{equation*}
        |\hat{f}(\xi)|\le e^{-\frac{|\xi|^{2s}}{2}}\qquad \forall \xi\in B_r
    \end{equation*}
  \item Assume additionally that $f\in L^p$, with
    $p\in(1,\infty]$. For any $r>0$ there exists\\
    $\kappa=\kappa(\rho_{2s+\delta},r, \norm{f}_{L^p})\in(0,1)$ such
    that
    $$\sup_{\abs{\xi}\ge r}|\hat{f}(\xi)|\le \kappa(r).$$
\end{enumerate}     
\end{lemma}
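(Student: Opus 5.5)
For part (i) I will work directly from the expansion \eqref{eq: expansion_uniform} of Hypothesis \ref{ass: sunif}. For $|\xi|\le 1$ we have $\hat f(\xi)=1-|\xi|^{2s}+R_s(\xi)$ with $|R_s(\xi)|\le C_0|\xi|^{2s+\delta}$, and $C_0,\delta$ do not depend on $s$. I will choose $r\in(0,1)$ with $C_0r^{\delta}\le 1/4$. Then on $B_r$
\[
|\hat f(\xi)|\le 1-|\xi|^{2s}+C_0|\xi|^{2s+\delta}\le 1-\tfrac34|\xi|^{2s}.
\]
The right-hand side is nonnegative because $|\xi|^{2s}\le 1$. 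Next I compare this with $e^{-|\xi|^{2s}/2}$, using $e^{-y/2}\ge 1-y/2\ge 1-\tfrac34 y$ for $y=|\xi|^{2s}\in[0,1]$. This gives the claim. The radius $r$ depends only on $C_0$ and $\delta$, so it is uniform in $s$; this uniformity is what the Berry--Esseen theorem needs later.

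For part (ii) I will argue in two steps: strict inequality pointwise, then a uniform bound on the complement of $B_r$. First, since $f$ is a probability density, $|\hat f(\xi)|\le 1$. If $|\hat f(\xi_0)|=1$ for some $\xi_0\neq0$, then $e^{-ix\cdot\xi_0}$ would be a.e. constant on the support of $f$. This would force $f$ to live on a family of parallel hyperplanes, which has Lebesgue measure zero, contradicting absolute continuity. Hence $|\hat f(\xi)|<1$ for all $\xi\neq 0$. Second, $f\in L^p$ with $p>1$ gives decay at infinity: I will write $f=f\mathbb 1_{\{f\le M\}}+f\mathbb 1_{\{f>M\}}$. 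The first piece lies in $L^1\cap L^2$, so by Plancherel and Riemann--Lebesgue its transform tends to zero. The second piece has $L^1$ norm at most $M^{1-p}\norm{f}_{L^p}^p$, which is small for $M$ large. Together these give $\limsup_{|\xi|\to\infty}|\hat f(\xi)|\le M^{1-p}\norm{f}_{L^p}^p$ for every $M$, so $\hat f\to 0$. Since $\hat f$ is continuous, it attains its supremum on the compact annulus $r\le|\xi|\le R'$, and that supremum is strictly less than $1$. Outside the annulus $|\hat f|\le 1/2$ for $R'$ large. Taking the larger of the two bounds gives $\kappa(r)<1$.

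The main obstacle is that $\kappa$ should depend only on the listed quantities ($r$, $\norm{f}_{L^p}$, $\rho_{2s+\delta}$) and not on $f$ itself, so that it is uniform over the family. The compactness argument above yields only an $f$-dependent constant. To make it quantitative I would use two ingredients.

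\emph{Near $B_r$.} The expansion controls $\hat f$ on the shell $r\le|\xi|\le 1$ uniformly: with the choice of $r$ above, $|\hat f(\xi)|\le 1-\tfrac34 r^{2s}\le 1-\tfrac34 r^{2}$.

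\emph{Away from the origin.} Here I would use a quantitative non-concentration bound derived from $L^p$. One has $1-\mathrm{Re}\big(e^{-i\theta}\hat f(\xi)\big)=\int(1-\cos(x\cdot\xi+\theta))f\,dx$. The integrand is at least $c$ outside a union of slabs around hyperplanes where $x\cdot\xi+\theta\in2\pi\mathbb Z$. Restricted to a large ball whose radius is set by the tails (via $\Psi$ and the tail bound \eqref{eq: tailpointwise}, uniform in $s$), these slabs have small measure. By H\"older, $\int_{\text{slabs}}f\le \norm{f}_{L^p}|\text{slabs}|^{1-1/p}$, which is small, so a fixed fraction of the mass sees $1-\cos\ge c$. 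This yields an explicit $\kappa<1$ depending only on the admissible constants.
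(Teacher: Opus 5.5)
\paragraph{Comparison with the paper.} Your part (i) is the paper's argument. The paper takes $C_{r_0}r^\delta\le\frac12$ and obtains $1-\frac12|\xi|^{2s}$; your $\frac14$ works equally well.

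For part (ii) the paper gives no argument of its own. It cites \citep[Prop.~26]{carlen_entropy_2010}, which keeps $\sup_{|\xi|\ge r}|\hat f|$ away from $1$ in terms of an entropy bound and a moment bound; the entropy bound is deduced from $f\in L^1\cap L^p$ plus some finite moment.

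You instead prove the non-degeneracy directly. You write $1-|\hat f(\xi)|=\int(1-\cos(x\cdot\xi+\theta))f\d x$, use H\"older to show that $f$ carries little mass on the slabs where $1-\cos$ is small, and use \eqref{eq: tailpointwise} for tightness uniform in $s$. This appears to be the mechanism behind the cited result, with $L^p$ in place of entropy and pointwise tails in place of moments.

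What your route buys is a self-contained proof in which $\kappa$ visibly depends only on $d$, $r$, $\norm{f}_{L^p}$ and the constants $C,R,\Psi$ of Hypothesis \ref{ass: sunif}. That is exactly the uniformity in $s$ that Theorem \ref{thm: BE} needs. It also repairs the stated dependence on $\rho_{2s+\delta}$. Under \eqref{eq: expansion_uniform}, even the $2s$-moment of $f$ is infinite: if it were finite, dominated convergence would give $1-\mathrm{Re}\,\hat f(\xi)=o(|\xi|^{2s})$. So tail control is the meaningful replacement. The paper's route is shorter, but it relies on checking that the cited proposition's hypotheses hold for heavy-tailed kernels.

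\paragraph{One correction.} Your \emph{near $B_r$} step is false as written. The bound $|\hat f(\xi)|\le 1-\frac34|\xi|^{2s}$ holds only on the ball whose radius you fixed in (i), not on the whole shell up to $|\xi|=1$. There the expansion no longer gives a bound below $1$, since the remainder bound $C_0|\xi|^{2s+\delta}$ can exceed $|\xi|^{2s}$ when $C_0>1$.

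The step is unnecessary, because the slab estimate already covers all $|\xi|\ge r$. At most $2+\rho|\xi|/\pi$ slabs of width $2a/|\xi|$ meet $B_\rho$. Their total measure there is therefore $\lesssim a\rho^{d-1}(\rho+1/r)$, which is small once $a$ is chosen small depending on $\rho$ and $r$. Run that argument on the whole range instead.

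A minor point: Riemann--Lebesgue applies directly to $f\in L^1$, so the truncation in your qualitative step is not needed.
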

\begin{proof}
\begin{enumerate}[(i)]\item 
Hypothesis \ref{ass: sunif} implies that there exist a radius $r_0>0$ and a constant $C_{r_0}$ such that
$$
\abs{\hat{f}(\xi)-1+|\xi|^{2s}}\le C_{r_0}|\xi|^{2s+\delta}
$$
Choose $r<r_0$ and such that $$C_{r_0}r^{\delta}\le\frac{1}{2}\implies C_{r_0}|\xi|^{2s+\delta}\le \frac{1}{2}|\xi|^{2s}\quad\text{for }|\xi|\le r. $$ Then
$$
|\hat{f}(\xi)|\le \abs{1-|\xi|^{2s}+C_{r_0}|\xi|^{2s+\delta}}\le \abs{1-\frac{1}{2}|\xi|^{2s}}\le e^{-|\xi|^{2s}}\quad\text{ for }|\xi|\le r
$$
since for all $x\in (0,1)$, $|1-x|\le e^{-x}$
\item This statement is a direct consequence of \citep[Prop. 26]{carlen_entropy_2010}, where the assumption of
  finiteness of the entropy comes from the assumption that 
  $f \in L^1\cap L^p$ and has some finite moment.
  \qedhere
\end{enumerate}

\end{proof}

\begin{proof}
    Again we follow the same procedure of \citep{canizotassi2024}.
    \begin{equation*}
    	\begin{split}
    		{\Jnormh}(\xi)&=\int e^{-ix\xi} (\bar \sigma^{2s} n)^{d/{2s}}f^{\sigma_1\dots,\sigma_n}(n^{1/2s}\bar \sigma x)\d x=\int e^{-iy \frac{\xi}{{n}^{1/2s}\bar \sigma}}f_{\sigma_1}*\cdots*f_{\sigma_n}(y)\d y\\
    		&=\mathcal{F}(f_{\sigma_1}*\cdots f_{\sigma_n})\Big(\frac{\xi}{n^{1/2s}\bar \sigma}\Big)=\prod_{j=1}^n\hat{f}_{\sigma_i}\Big(\frac{\xi}{n^{1/2s}\bar \sigma}\Big)=\prod_{j=1}^n \int e^{-ix \frac{\xi}{n^{1/2s}\bar \sigma}}\sigma_i^{-d}f\Big(\frac{x}{\sigma_i}\Big) \d x \\
    		&=\prod_{j=1}^n \int e^{-iy\frac{\xi \sigma_i}{n^{1/2s}\bar \sigma}}f(y)\d x=\prod_{j=1}^n\JJi.
    	\end{split}
    \end{equation*}
Since $\widehat{G^s}(\xi)=e^{-|\xi|^{2s}}$ also
\begin{equation*}
 \widehat{G}_s(\xi)=\prod_{i=1}^n \GGi.
\end{equation*}

Without loss of generality, we can assume $p\le2$. Hausdorff--Young's
inequality then implies that $\hat{f}$ is in $L^{p'}\cap L^\infty$
with $p'\in [2,\infty)$. 
 Using H\"older's inequality,
$$\norm{\prod_{j=1}^n\JJi}_{L^1}\le \prod_{j=1}^n\norm{\JJi}_{L^n}=
\prod_{j=1}^n\Big(\frac{\bar\sigma
  \sqrt{n}}{\sigma_j}\Big)^{d/n}\norm{\hat{f}}_{L^n}\le
n^{d/2}\Big(\frac{L}{l}\Big)^d\norm{\hat{f}}_{L^n}^n,$$
which is
finite for $n\ge p'$, i.e.  $\Jnormh$ is in $L^1$ for any $n\ge
p'$, i.e we control the $L^\infty$ norm by the $L^1$ norm of the
Fourier transform, since
\begin{equation*}
  \abs{\Jnorm(x)-G(x)}
  =
  (2\pi)^{-d}\abs{\ird (\Jnormh (\xi)-\hat G(\xi))e^{i\xi x}\d \xi}
  \le
  (2\pi)^{-d}\ird \abs{\Jnormh(\xi)-\hat G(\xi)} \d \xi
\end{equation*}

Let $r $  be given by Lemma \ref{lemfou} $(i)$ for $\hat f$ and define 
$B_r=B_{r}(n,l,L):=\{ \xi: \abs{\xi}<n^{1/2s} \frac{l}{L}r \}$
and  we split the integral into high and low frequencies (we drop the $(2\pi)^{-d}$ constant for better readability)
\begin{equation*}
    \norm{\Jnorm-G}_{L^\infty}\le \int_{\xi\in B_r^c}|\Jnormh(\xi)|\d \xi+\int_{\xi\in B_r^c}|\hat G(\xi)|\d \xi+\int_{\xi\in B_r}|\Jnormh(\xi)-\hat G(\xi)|\d \xi:=T_1+T_2+T_3
\end{equation*}
We suppose without loss of generality $p'$ integer. If this is not the case, we work with $\cl{p'}$ since $\hat{f}$ is also in  $L^{\cl{p'}}$.
\begin{equation}\label{dod}
    \begin{split}
        &\int_{B_ r^c} \prod_{j=1}^n\abs{\JJi}= n^{d/2s}\int_{|\eta| \ge\frac{l}{L} r} \abs{\hat f \Big(\frac{\sigma_1}{\bar \sigma} \eta\Big)}\cdots 
        \abs{\hat f\Big(\frac{\sigma_{p'}}{\bar \sigma} \eta\Big)} 
        \abs{\hat f \Big(\frac{\sigma_{p'+1}}{\bar \sigma} \eta\Big)}\cdots \abs{\hat f\Big(\frac{\sigma_n}{\bar \sigma} \eta\Big)} \d\eta\\
        &\le n^{d/2s}  \sup_{|\eta| \ge\frac{l}{L} r} \abs{\hat f \Big(\frac{\sigma_{p'+1}}{\bar \sigma} \eta\Big)}\cdots \sup_{|\eta| \ge\frac{l}{L} r} \abs{\hat f\Big(\frac{\sigma_n}{\bar \sigma} \eta\Big)} \int_{|\eta| \ge\frac{l}{L} r} \abs{\hat f \Big(\frac{\sigma_1}{\bar \sigma} \eta\Big)}\cdots 
        \abs{\hat f\Big(\frac{\sigma_{p'}}{\bar \sigma} \eta\Big)}  \d\eta
    \end{split}
\end{equation}
For each $j=p'+1,\dots,n$
\begin{equation}\label{1313}
    \sup_{|\eta| \ge\frac{l}{L} r} \abs{\hat f \Big(\frac{\sigma_{j}}{\bar \sigma} \eta\Big)}\le 
    \sup_{ |\eta'| \ge r\frac{l^2}{L^2}} |\hat f (\eta')|\le \kappa
\end{equation}
with $\kappa=\kappa(r\frac{l^2}{L^2})\in (0,1)$, coming from Lemma \ref{lemfou} $(i)$.

The  factors in the integrals in equation \eqref{dod} can be bounded again via H\"older's Inequality and thus we rewrite 
\begin{equation}\label{1312}
\begin{split}
    &\int_{|\eta| \ge\frac{l}{L}r} \prod_{j=1}^{p'}\abs{\hat f \Big(\frac{\sigma_j}{\bar \sigma} \eta\Big)}  \d\eta\le \prod_{j=1}^{p'}\bigg(\int_{|\eta| \ge\frac{l}{L}r}\abs{\hat f \Big(\frac{\sigma_j}{\bar \sigma} \eta \Big)\d\eta}^{p'}\bigg)^{1/p'}\le \Big(\frac{L}{l}\Big)^{d}||\hat{f}||_{p'}^{p'}\le\Big(\frac{L}{l}\Big)^{d}C_p||f||_{p}^{p'} 
        \end{split}
\end{equation}
 applying first H\"older  and then Hausdorff-Young inequality ($C_p$ is the Hausdorff-Young constant). Substituting equations \eqref{1313} and \eqref{1312} into equation \eqref{dod}, we arrive at
\begin{equation}\label{eq: 21}
    T_1\le C_pn^{d/2s}\Big( \frac{L}{l}\Big)^{d} \kappa^{n-p'}||f||_p^{p'}
\end{equation}
	Applying the same procedure, we  derive a very similar bound for  $T_2$ as well.
	Since $\kappa<1$, the term $\kappa(r)^{n-p'}$ decays rapidly with respect to 
 $n$, thus for $n$ large enough (depending on $p$), there exists $C_1>0$ such that
	\begin{equation}\label{eq: 21bis}
		T_1+T_2\le \frac{C_1}{n^{\delta/{2s}}}
	\end{equation}

We are left to analyse $T_3$.
 Using the factorisation in Lemma \ref{lemma: produttoria}, we obtain
 
\begin{equation}\label{eqA}
    \begin{split}
&\abs{\Jnormh(\xi)-\hat G^s}=\prod_{j=1}^n \JJi-\prod_{j=1}^n \GGi\\
&=\sum_{j=1}^n \abs{\JJi-\GGi}\abs{\prod_{k=1}^{j-1}\JJj}\abs{\prod_{k=j+1}^{n}\GGj}\\
    \end{split}
\end{equation}

Since $$\xi\in B_r\implies \abs{\frac{\xi\sigma_k}{n^{1/2s}\bar{\sigma}}}\le r\frac{\sigma_k}{\bar{\sigma}}\frac{l}{L}\le r$$
from Lemma \ref{lemfou} $(i)$ it follows that 
\begin{equation*}
    \abs{\hat{f}\Big(\frac{\xi \sigma_j}{n^{1/2s}\bar\sigma }\Big)}\le e^{-\frac{|\xi|^{2s}}{2n}\frac{\sigma_i^{2s}}{\bar \sigma^{2s}}}\le e^{-\frac{|\xi|^{2s}}{2n}(\frac{l}{L})^{2s}}. 
\end{equation*}
and 
\begin{equation*}
    \abs {\widehat{G}^s\Big({\frac{\xi \sigma_j}{n^{1/2s}\bar \sigma}}\Big)}=e^{-\frac{|\xi|^{2s}}{n}\frac{\sigma_j^2}{\bar \sigma^2}}\le e^{-\frac{|\xi|^{2s}}{n}(\frac{l}{L})^{2s}  }.
\end{equation*}

Then
\begin{equation*}
\begin{split}
    &\frac{\abs{\Jnormh(\xi)-\hat G^s(\xi)}}{|\xi|^{2s+\delta}}=\frac{1}{\abs{\xi}^{2s+\delta}}\Big[ \sum_{j=1}^n \abs{\JJi-\GGi}\abs{\prod_{k=1}^{j-1}\JJj}\abs{\prod_{k=j+1}^{n}\GGj}\Big]\\
    &\le \frac{1}{\abs{\xi}^{2s+\delta}} \sum_{j=1}^n \abs{\JJi-\GGi} e^{-\frac{|\xi|^{2s}}{2n}(j-1)(\frac{l}{L})^{2s}}e^{-\frac{|\xi|^{2s}}{n}(n-j)(\frac{l}{L})^{2s}  }\\
    &\le \frac{1}{\abs{\xi}^{2s+\delta}} \sum_{j=1}^n \abs{\frac{\xi \sigma_j}{n^{1/2s}\bar \sigma}}^{2s+\delta} \frac{\abs{\JJi-\GGi}}{ \abs{\frac{\xi \sigma_j}{{n}^{1/2s}\bar \sigma}}^{2s+\delta}}e^{-\frac{|\xi|^{2s}}{4}\frac{n-1}{n}(\frac{l}{L})^{2s}}\\
    &\le \frac {1}{n^{1+\frac{\delta}{2s}}}\Big(\frac{L}{l}\Big)^{2s+\delta}n \sup_{|\eta'|\le r}\frac{|\hat{f}(\eta')-\hat G^s(\eta')|}{\abs{\eta'}^{2s+\delta}} e^{-\frac{\abs{\xi}^{2s}}{4}(\frac{l}{L})^{2s} }
    \end{split}
\end{equation*}
From Lemma  \ref{lemfou}, since $|\eta'|\le r\le r_0$
$$
\sup_{|\eta'|\le r }\frac{|\hat{f}(\eta')-\hat G^s(\eta')|}{\abs{\eta'}^{2s+\delta}} \le C_{r_0}
$$
Summing up the results, the decay rate of the term $T_3$ is
\begin{equation*}
    \begin{split}
        T_3&\le \frac{C_{r_0}}{n^{\delta/2s}}\Big(\frac{L}{l}\Big)^{2s+\delta} \int e^{-\frac{\abs{\xi}^{2s}}{4}(\frac{l}{L})^{2s} }|\xi|^{2s+\delta}\d \xi:=\frac{\tilde{C}}{n^{\delta/2s}}.
    \end{split}
\end{equation*}
Including the estimates for $T_1, T_2$, we complete the proof.

\end{proof}

\begin{osservazione}\label{rmk: CLTunif}
Standard Generalised Central Limit Theorems, such as those established in \citep{goudon_junca_toscani_BE_2002}, typically rely on assumptions similar to Hypothesis \ref{assJ}, proving a convergence rate that depends explicitly on the  gap $1-s$. 
In contrast, we emphasise again that thanks to Hypothesis \ref{ass: sunif} we establish a Generalised Central Limit Theorem  with a uniform--in--$s$ rate of convergence $\delta$. This stability is obtained by restricting the class of functions, which in turn allows us to obtain a speed of convergence that remains positive in the limit $s\to 1.$ Although based on a simple premise, to the best of our knowledge, no similar uniform result  exists in the literature.
\end{osservazione}


\section{Asymptotic behaviour and properties of the equilibrium}\label{sec: asymptotic}
 To prove Theorem \ref{thm: main1}, we  use Harris's theorem to show exponential convergence to equilibrium of solutions in weighted $L^1_k$ spaces independently of the scaling parameter $\e\in(0,1)]$ and of the fractional index $s\in(1/2,1)$.
\subsection{Harris's Theorem }\label{harristheory}
For completeness, we briefly summarise  the main concepts of Harris’s theorem below, referring the reader to \citep{Hairer2011, canizo_harris-type_2021, Meyn2010} for a more detailed treatment.

Let $\M = \M(\Omega)$ be the space of finite signed measures on a measurable space $\Omega$, with total variation norm $\norm{\mu} = \int |\mu|$. Denote by $\mathcal{P}$ the set of probability measures in $\M$. Given a measurable function $V:\Omega\rightarrow[1,\infty)$ 
(a weight function), define $\M_V$ as the subspace of measures for which $\norm{\mu}_V := \int V |\mu| < \infty$, and set $\mathcal{P}_V := \M_V \cap \mathcal{P}$.

A linear operator $S : \M \to \M$ is called stochastic if it preserves probability measures. A stochastic semigroup $(S_t){t \ge 0}$ is a family of such operators satisfying $S_0 = \mathrm{Id}$ and $S_{t+s} = S_t \circ S_s$ for all $t, s \ge 0$.

We say that $S$ is stochastic on $\M_V$ if it maps $\M_V$ into itself and is bounded in the $\norm{\cdot}_V$ norm. The same applies to a stochastic semigroup.

A common tool to establish exponential convergence to equilibrium for Markov semigroups is Harris’s Theorem, which combines a confining (Lyapunov-type) and a minorisation condition (local positivity), called Doeblin-Harris or Harris Condition.

\begin{hyp}[Semigroup Lyapunov Condition]
There exist constants $\lambda, C > 0$ such that
\begin{equation}\label{eq: semigroupLyap}\tag{LC}
\norm{S_t \mu}_V \le e^{-\lambda t} \norm{\mu}_V + \frac{C}{\lambda} (1 - e^{-\lambda t}) \norm{\mu}.
\end{equation}
\end{hyp}

\begin{hyp}[Harris Condition]
For some set $\mathcal{S} \subseteq \Omega$, $S$ satisfies
\begin{equation}\label{eq: HLB}\tag{HC}
	S\mu \ge \alpha \rho \int_{\mathcal{S}} \mu \quad \text{for all } 0 \le \mu \in \M,
\end{equation}
for some $\alpha \in (0,1)$ and a probability measure $\rho$.
\end{hyp}

\begin{teorema}[Harris's theorem for semigroups]
\label{sHarris}
Let $V : \Omega \to [1, \infty)$ be a measurable weight function, and let $(S_t)_{t \ge 0}$ be a stochastic semigroup on $\M_V$. Suppose that the following hold:
\begin{enumerate}
	\item The semigroup $(S_t)_{t \ge 0}$ satisfies the semigroup Lyapunov Hypothesis \ref{eq: semigroupLyap}.
	\item There exist constants $T^* > 0$ and $R > \frac{2C}{\lambda}$ such that the operator $S_{T^*}$ satisfies the Harris condition Hypothesis \ref{eq: HLB} on the set $\mathcal{S} := \{ x \in \Omega : V(x) \le R \}$.
\end{enumerate}
Then the semigroup admits a unique invariant probability measure $\mu^* \in \mathcal{P}_V$, and there exist constants $C, \lambda > 0$ such that for any $\nu \in \mathcal{P}_V$ with $\int_\Omega \nu = 0$, and for all $t \ge 0$, we have
\begin{equation*}
	\norm{S_t \nu}_V \le C e^{-\lambda t} \norm{\nu}_V.
\end{equation*}
In particular, for any $\mu \in \mathcal{P}_V$ and all $t \ge 0$, choosing $\nu=\mu-\mu^*$, one has
\begin{equation*}
	\norm{S_t \mu - \mu^*}_V \le C e^{-\lambda t} \norm{\mu - \mu^*}_V.
\end{equation*}
\end{teorema}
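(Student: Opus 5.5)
The statement just given is Harris's theorem for semigroups, a known result (Hairer–Mattingly, and Cañizo–Mischler for the semigroup form). I would prove it by reducing to the discrete-time Harris theorem for the single operator $S:=S_{T^*}$ and then interpolating in time.

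\textbf{Step 1: the Lyapunov condition at time $T^*$.} Evaluating \eqref{eq: semigroupLyap} at $t=T^*$ gives, for probability measures $\mu$,
\[
\norm{S\mu}_V \le \gamma \norm{\mu}_V + K\norm{\mu}, \qquad \gamma=e^{-\lambda T^*}<1,\quad K=\tfrac{C}{\lambda}(1-\gamma).
\]
The threshold $R>2C/\lambda$ in the hypothesis is exactly what the discrete theorem needs, namely $R>2K/(1-\gamma)$, since $2K/(1-\gamma)=2C/\lambda$.

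\textbf{Step 2: a weighted-norm contraction for $S$.} For a small $\beta>0$ I would introduce
\[
\norm{\nu}_\beta := \int (1+\beta V)\,|\nu|,
\]
which is equivalent to $\norm{\cdot}_V$. Take $\nu=\mu_1-\mu_2$ with $\mu_1,\mu_2$ probability measures and $\int\nu=0$, and distinguish two cases, following Hairer–Mattingly.

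\emph{Case (a): $\nu$ carries substantial $V$-mass outside $\mathcal S$,} i.e.\ $\int V|\nu|\ge R'\norm{\nu}$ for a suitable $R'$ comparable to $R$. Then the Lyapunov contraction alone yields
\[
\norm{S\nu}_\beta\le \bar\alpha\norm{\nu}_\beta \quad\text{for some }\bar\alpha<1,
\]
because $K$ times the mass is dominated by a fraction of the $V$-moment.

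\emph{Case (b): otherwise.} Write $\nu=\nu_+-\nu_-$, where $\nu_\pm$ have equal mass $m=\norm{\nu}/2$. A Markov-type estimate shows that each $\nu_\pm$ puts mass at least $c\,m$ on $\mathcal S=\{V\le R\}$; this is where $R>2C/\lambda$ is used. The Harris condition \eqref{eq: HLB} then gives
\[
S\nu_\pm\ge \alpha c\, m\,\rho .
\]
The common part $\alpha c\, m\,\rho$ cancels in $S\nu_+-S\nu_-$, so $\norm{S\nu}\le(1-\alpha c)\norm{\nu}$. Adding $\beta$ times the Lyapunov bound and choosing $\beta$ small (of order $\alpha c/K$) gives $\norm{S\nu}_\beta\le\bar\alpha\norm{\nu}_\beta$ in this case as well.

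\textbf{Step 3: extension to signed measures and the invariant measure.} Linearity plus the decomposition $\nu=\nu_+-\nu_-$ extends the contraction to every signed $\nu$ with $\int\nu=0$. Since $S$ is then a strict contraction on the complete metric space $(\mathcal P_V,\norm{\cdot}_\beta)$, Banach's fixed point theorem gives a unique fixed point $\mu^*$. Because $S_t$ commutes with $S$, the measure $S_t\mu^*$ is also a fixed point of $S$, hence equals $\mu^*$; so $\mu^*$ is invariant for the whole semigroup.

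\textbf{Step 4: from discrete to continuous time.} Write $t=nT^*+r$ with $r\in[0,T^*)$. The Lyapunov condition bounds $\norm{S_r}_{V\to V}$ uniformly in $r\in[0,T^*]$, say by $1+C/\lambda$. Combining this with the contraction $\bar\alpha^n$ and the norm equivalence gives $\norm{S_t\nu}_V\le Ce^{-\lambda' t}\norm{\nu}_V$ with $\lambda'=-\log\bar\alpha/T^*$.

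The main obstacle is the bookkeeping in Step 2: choosing $R'$ and $\beta$ so that both cases contract simultaneously. The condition $R>2C/\lambda$ is used precisely to ensure there is room for such a choice.
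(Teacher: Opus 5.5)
The paper gives no proof of this theorem; it quotes it from Hairer--Mattingly and Ca\~nizo--Mischler, and your sketch is exactly the standard argument from those references: discrete-time Harris for $S_{T^*}$ in the norm $\int(1+\beta V)|\nu|$, the two-case split with an intermediate threshold $C/\lambda<R'<R/2$ (which is where $R>2C/\lambda$ enters), Banach's fixed point on $\mathcal{P}_V$, and interpolation in time. It is correct, and you rightly read the statement's ``$\nu\in\mathcal{P}_V$ with $\int\nu=0$'' as zero-mass signed measures in $\mathcal{M}_V$.
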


If $S_t$ is generated by an operator $L$, a more familiar form of the Lyapunov condition is
\begin{equation}
	\label{eq:sLyap}
	L^*V \le C - \lambda V,
\end{equation}
where $L^*$ is the (formal) adjoint acting on suitable test functions (see \citep{hairer2010convergence}, where this is interpreted in a submartingale sense). We verify this condition for the generator of equation \eqref{DFFP} in Theorems \ref{thm: lyapunov}, which in turn implies Hypothesis \ref{eq: semigroupLyap}.

\subsection{Lyapunov Condition}\label{subsec: lyap}
We define  $L_\e^*$, the formal dual operator of $L_e^s$ acting on $\mathcal{C}^2$ functions with appropriate decay at infinity to ensure that the dual operator of the convolution is well-defined. To lighten the notation we will remove the superscript $s$.
\begin{equation}\label{eq: dualL}	    L_\e^*\f=\frac{1}{\e^{2s}}\ird J_\e^s(y)[\f(x+y)-\f(x)]\d y-x\nabla \f.
     \end{equation}

\begin{teorema}\label{thm: lyapunov}
  Let $0<k<2s$ and assume that $J$ satisfies \ref{ass: sunif}.
  Then, for all $s\in(0,1)$ $\e\in(0,1]$, there exist constants $C_L, \lambda_L>0$ that depend only on $k$, $d$ and $J$ but not on $\e$ or $s$
  such that
  $$L_\e^*(\ap{x}^k)\le C_L-\lambda_L \ap{x}^k.$$
\end{teorema}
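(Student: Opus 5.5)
The plan is to test the dual operator \eqref{eq: dualL} on $\f(x)=\ap{x}^k$ and to bound the drift part and the nonlocal part separately. The drift should produce the dissipation $-k\f$. The nonlocal part should contribute only a constant, bounded uniformly in $\e$ and $s$. As a result $\lambda_L$ will be (essentially) $k$, and all the work goes into $C_L$. Since $k<2s$, the admissible $s$ satisfy $s>k/2$, so $s$ stays away from $0$ once $k$ is fixed. The constants will depend on $k$, $d$ and $J$ (through $C$, $R$, $\Psi$ of Hypothesis \ref{ass: sunif} and the gap $2s-k$ through $k$), as the statement allows.

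\emph{Step 1 (drift).} From $\nabla\f=kx\ap{x}^{k-2}$ we get
$$-x\cdot\nabla\f=-k|x|^2\ap{x}^{k-2}=-k\f+k\ap{x}^{k-2}\le -k\f+k,$$
because $k<2$.

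\emph{Step 2 (pointwise bound on increments).} Write $z=\e y$. Since $J_\e^s$ is centred (here we use $\ird yJ^s=0$), we may subtract $\nabla\f(x)\cdot z$ inside the integral. We then need
$$\f(x+z)-\f(x)-\nabla\f(x)\cdot z\le C_k\min\{|z|^2,|z|^k\}$$
uniformly in $x$. For $|z|\le1$ this follows from Taylor's formula, since $D^2\f$ is bounded when $k\le2$. For $|z|>1$ there are two cases. If $k>1$, we use that $\nabla\f$ is $(k-1)$-Hölder. If $k\le1$, we use $\f(x+z)-\f(x)\le|z|^k$ together with $|\nabla\f|\le k$; the leftover linear term is handled by cancelling it against the small-jump region using the centring. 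For $s\le1/2$, where $J^s$ has no first moment, I would first try a symmetric truncation, $\nabla\f\cdot z\,\mathbf 1_{|z|\le1}$, and bound the truncated first moment of $J^s$.

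\emph{Step 3 (integrating against the kernel, the main obstacle).} Step 2 leaves the quantity
$$\e^{-2s}\ird J^s(y)\min\{\e^2|y|^2,\e^k|y|^k\}\d y.$$
We split it at $|y|=1/\e$.

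On the region $|y|\le 1/\e$, the part $|y|\le R$ is bounded by $R^2$. For $R\le|y|\le1/\e$ we use the tail bound \eqref{eq: tailpointwise}:
$$\int_{R\le|y|\le1/\e}|y|^2J^s\le C(1-s)\int_R^{1/\e}r^{1-2s}\d r+\norm{\Psi}_{L^1_2}\le C\,\frac{1-s}{2-2s}\,\e^{2s-2}+C .$$
Multiplying by $\e^{2-2s}$ gives a bound uniform in both $\e\le1$ and $s$. This is where the factor $(1-s)$ in Hypothesis \ref{ass: sunif} is essential: it exactly cancels the divergence $1/(2-2s)$ as $s\to1$.

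On the region $|y|>1/\e$, we get
$$\e^{k-2s}\int_{|y|>1/\e}|y|^kJ^s\le \e^{k-2s}\Big[C(1-s)\frac{\e^{2s-k}}{2s-k}+\e^{2-k}\norm{\Psi}_{L^1_2}\Big]\le \frac{C}{2s-k}+C.$$
This is finite with a constant depending only on $k$ through the gap $2s-k$. Since $\e^{2-2s}\le 1$, the $\Psi$ contribution is also harmless.

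\emph{Step 4 (conclusion).} Combining Steps 1–3 gives $L_\e^*\f\le C_L-k\f$, so we can take $\lambda_L=k$.

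The delicate points are the uniformity in Step 3, namely the cancellation of $(1-s)$ against $(2-2s)^{-1}$ and the control of the gap $2s-k$, and, for $s\le1/2$, the treatment of the linear term in Step 2 without a first moment.
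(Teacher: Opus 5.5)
Your route differs from the paper's. The paper splits the jump integral at the fixed unscaled radius $R$ of \eqref{eq: tailpointwise}. It Taylor-expands the near field, discarding the first-order term ``by centring'', to get $C_k\e^{2-2s}M_2^R(J^s)$. For the far field it replaces $J^s$ by $C(1-s)|y|^{-d-2s}+\Psi$ and quotes the decay $-(-\Delta)^s\ap{x}^k\lesssim\ap{x}^{k-2s}\bigl(1+\frac{1}{2s}+\frac{1}{2s-k}\bigr)$ of Proposition \ref{prop: fraclap_decay}.

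You instead split at the scaled radius $|y|=1/\e$, use an $x$-uniform bound $C_k\min\{|z|^2,|z|^k\}$ on the second-order increment, and integrate the tail bound by hand. This buys two things. The cancellations ($\e^{2-2s}\cdot\e^{2s-2}$, $\e^{k-2s}\cdot\e^{2s-k}$, and $(1-s)$ against $(2-2s)^{-1}$) become explicit. And the one-sided bound \eqref{eq: tailpointwise} is only ever applied to nonnegative integrands, whereas the paper applies it to $\f(x+\e y)-\f(x)$, which changes sign.

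Both arguments carry the same factor $\frac{1}{2s-k}$. So ``depending on $k$ through the gap $2s-k$'' gives uniformity only where $2s-k$ is bounded below, e.g.\ for $k<1$, where $\frac{1-s}{2s-k}\le\frac{1}{2(1-k)}$. Also, Hypothesis \ref{ass: sunif} only concerns $s\in(1/2,1)$, and the paper's proof concludes uniformity there, so you need not treat $s\le 1/2$.

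The step you leave open is the linear term for $k<1$, which is exactly the range used in Theorem \ref{thm: main1}. Centring does not make it vanish. After compensating only with $\nabla\f(x)\cdot z\,\mathbf{1}_{|z|\le 1}$, you are left with
\[
-\frac{1}{\e^{2s}}\,\nabla\f(x)\cdot\int_{|z|>1} z\,J^s_\e(z)\d z ,
\]
and your Step 3 never estimates it. With \eqref{eq: tailpointwise}, $|\nabla\f|\le k$ and $\e\le 1/R$, the best available bound is
\[
k\,\e^{1-2s}\int_{|y|>1/\e}|y|\,J^s(y)\d y\le k\Bigl(C|S^{d-1}|\frac{1-s}{2s-1}+\e^{2-2s}\norm{\Psi}_{L^1_2}\Bigr).
\]
This is uniform in $\e$ and harmless as $s\to1$, but it is of order $(2s-1)^{-1}$ as $s\downarrow 1/2$. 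Because \eqref{eq: tailpointwise} is only an upper bound on $J^s$, the odd part of the tail cannot be controlled better without further input.

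To get uniformity on all of $(1/2,1)$ you must either assume $J^s$ even, in which case this term vanishes identically (all examples in Section \ref{sec: examples} are radial), or add a bound on the odd part of $J^s$. The imaginary part of \eqref{eq: expansion_uniform} suggests such a bound but does not give it pointwise.

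The paper's proof uses evenness tacitly at the same point. It drops $\e^{1-2s}\nabla\f(x)\cdot\int_{|y|\le R}yJ^s(y)\d y$, which for a non-even kernel is of order $\e^{1-2s}$, and it compares the far field with a principal-value fractional Laplacian. Once evenness is stated, your argument is complete. The cases $k=1$ (where $|\nabla\f\cdot z|\le|z|$) and $1<k<2$ (via your H\"older bound) need no compensation.
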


\begin{proof}

\begin{itemize}
\item \emph{Step 1: Fractional diffusion operator}

We start by analyzing the dual  of the fractional diffusion operator $\mathcal{A}_\e^*$ by splitting it into near and far field. After a change of variable we have
\begin{multline*}
\mathcal{A}_\e^*\f
=\frac{1}{\e^{2s}}\ird J^s(y)(\f(x+ \e y)-\f(x))\d y\\
=\frac{1}{\e^{2s}}\int_{|y|\le R} J^s(y)(\f(x+ \e y)-\f(x))\d y
+\frac{1}{\e^{2s}}\int_{|y|>R} J^s(y)(\f(x+ \e y)-\f(x))\d y,
\end{multline*}
where $R$ is the one appearing in \eqref{eq: tailpointwise}.
\begin{itemize}
    \item \emph{Step 1a: Near field estimate.}
         Since $\f$ is (at least) $C^2$,  Taylor's theorem gives 
    \begin{equation}\label{eq: taylor}
     \f(x+\e y)-\f(x)=\e \nabla \f(x)\cdot y+\frac{\e^2}{2}y^TD^2 \f(\zeta)y
    \end{equation}
    for some $\zeta\in[x,x+\e y]$.
     But
    $$
    D^2\f(\zeta)= k\Id \ap{\zeta}^{k-2}+k(k-2)z\otimes \zeta\ap{\zeta}^{k-4}.
    $$
    Hence, taking the absolute value, factoring out the common terms, and taking into account that $0<k<2$
    \begin{multline*}
    	\abs{D^2\f(\zeta )}\le k(1+|\zeta |^2)^{\frac{k}{2}-1}+|k(k-2)|(1+|\zeta |^2)^{\frac{k}{2}-2}|\zeta |^2
    	\\
    	= k(1+|\zeta |^2)^{\frac{k}{2}-1}+|k(2-k)|(1+|\zeta |^2)^{\frac{k}{2}-1}\frac{|\zeta |^2}{1+|\zeta |^2}\\
        \le \big(k+k(2-k)\big)(1+|\zeta |^2)^{\frac{k}{2}-1}\le C_k(1+|\zeta |^2)^{\frac{k}{2}-1}\le C_k
    \end{multline*}   

Thus since $J^s$ is centered,
\begin{multline}
\frac{1}{\e^{2s}}\int_{|y|\le R}J^s(y)(\f(x+\e y)-\f(x))\d y \\
=\frac{1}{\e^{2s}}\int_{|y|\le R}J^s(y) \nabla \f(x)\cdot \e y \d y+\frac{1}{\e^{2s}}\int_{|y|\le R}J^s(y)\frac{\e^2}{2}y^TD^2\f[\zeta]y\f(y)\d y\\
\le 
\e^{2-2s}\frac{1}{2}\int_{|y|\le R} J^s(y)\norm{D^2\f}_{L^\infty(B_R(x))}| y|^2\d y\\
    \le C_k\e^{2-2s}\int_{|y|\le R}|y|^{2}J^s(y)\d y =C_k\e^{2-2s}M_2^R(J^s)
\end{multline}
where $M_2^R(J^s)=\int_{|y|\le R}|y|^2 J^s(y)\d y<\infty$ by \eqref{eq: levydensity}.
\item {\textit{Step 1b: Far field estimate.}}
Since $|y|>R$, by Hypothesis \ref{ass: sunif}, \eqref{eq: tailpointwise}
\begin{multline}
    \frac{1}{\e^{2s}}\int_{|y|> R}J^s(y)(\f(x+\e y)-\f(x))\d y \\
    \le \frac{C}{\e^{2s}}(1-s)\ird |y|^{-d-2s} (\f(x+\e y)-\f(x))\d y + \frac{1}{\e^{2s}}\ird \Psi(y)(\f(x+\e y)-\f(x))\d y 
\end{multline}
By change of variable one can easily see that 
\begin{multline*}    
\frac{C}{\e^{2s}}(1-s)\ird |y|^{-d-2s} (\f(x+\e y)-\f(x))\d y\\
=\frac{C}{\e^{2s}}(1-s)\ird \e^{-d}\frac{\f(x+z)-\f(x)}{|z/\e|^{d+2s}}\d z\lesssim \ap{x}^{k-2s}
\end{multline*}
from the properties of the fractional Laplacian (see Proposition \ref{prop: fraclap_decay}.
Note that the scaling of the fractional operator exactly compensates for the $\e^{-2s}$ factor.

The second term can be bounded with no difficulties in $\e$ since $\Psi$ has finite second moment.
\end{itemize}
\item \emph{Step 2: Drift term.}
\begin{equation*}
\mathcal{B}^*\f=-x\cdot\nabla\f(x)=-k|x|^2(1+|x|^2)^{k/2-1}=-k\ap{x}^k+k\ap{x}^{k-2}
\end{equation*}
\item \emph{Step 3: Lyapunov.} Combining all these estimates,
$$L_\e^s\f=\mathcal{A}_\e^*\f+\mathcal{B}^*\f\le C_k\e^{2-2s}M_2^R(J^s)+ C\ap{x}^{k-2s}+\e^{2-2s}M_2(\Psi)-k\ap{x}^k+ k\ap{x}^{k-2}\le C_L-\lambda_L\ap{x}^k
$$
The constants $C$ and $\lambda$ can be clearly taken uniform in $\e\in(0,1]$ and $s\in(1/2,1]$.
    \end{itemize}
\end{proof}

\begin{proposizione}\label{prop: fraclap_decay}
Let $s\in(0,1)$. Then
    $$
    -(-\Delta)^s\ap{x}^{k}\lesssim \ap{x}^{k-2s}\left(1+\frac{1}{2s}+\frac{1}{2s-k}\right)
    $$
\end{proposizione}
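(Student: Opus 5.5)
The plan is to work with the symmetrised (second-difference) form of the singular integral in \eqref{def: FLI}. For $\f(x):=\ap{x}^k$ and $s\in(0,1)$, the principal value can be removed by averaging $z$ and $-z$:
\begin{equation*}
-(-\Delta)^s\f(x)=\frac{C_{d,s}}{2}\ird \frac{\f(x+z)+\f(x-z)-2\f(x)}{|z|^{d+2s}}\d z .
\end{equation*}
This integral converges absolutely because $\f\in C^2$ with bounded Hessian and $\f$ grows like $|z|^k$ with $k<2s$. I will treat $k\in(0,2s)$, as in Theorem \ref{thm: lyapunov}. The key choice is an $x$-dependent cutoff radius $\rho:=\ap{x}/2$, which splits the integral into a near part $|z|<\rho$ and a far part $|z|\ge\rho$. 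Only an upper bound is needed, so in the far part I can discard helpful negative terms.

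\emph{Near part.} By Taylor's formula, $\f(x+z)+\f(x-z)-2\f(x)\le |z|^2\sup_{|\zeta-x|\le|z|}|D^2\f(\zeta)|$. The Hessian computation in the proof of Theorem \ref{thm: lyapunov} gives $|D^2\f(\zeta)|\le C_k\ap{\zeta}^{k-2}$. For $|\zeta-x|\le\ap{x}/2$ we have $\ap{\zeta}\ge\ap{x}/2$; since $k-2<0$, this yields $\ap{\zeta}^{k-2}\le 2^{2-k}\ap{x}^{k-2}$. Therefore the near part is bounded by
\begin{equation*}
C\,C_{d,s}\ap{x}^{k-2}\int_{|z|<\rho}|z|^{2-d-2s}\d z= C\,C_{d,s}\,\frac{|\mathbb{S}^{d-1}|}{2-2s}\,\ap{x}^{k-2}\rho^{2-2s}\lesssim \frac{C_{d,s}}{1-s}\ap{x}^{k-2s}.
\end{equation*}
The normalising constant satisfies $C_{d,s}\lesssim s(1-s)$ uniformly in $s\in(0,1)$, because $|\Gamma(-s)|^{-1}=s(1-s)/\Gamma(2-s)$. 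The factor $1/(1-s)$ is thus absorbed, and the near part is $\lesssim\ap{x}^{k-2s}$.

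\emph{Far part.} I drop the term $-2\f(x)$ and use the elementary inequality $\ap{x\pm z}^k\le 2^k(\ap{x}^k+|z|^k)$, valid for $k>0$. This gives the bound
\begin{equation*}
C\,C_{d,s}\Big(\ap{x}^k\int_{|z|\ge\rho}|z|^{-d-2s}\d z+\int_{|z|\ge\rho}|z|^{k-d-2s}\d z\Big)
\lesssim \ap{x}^k\frac{\rho^{-2s}}{2s}+\frac{\rho^{k-2s}}{2s-k}\lesssim \ap{x}^{k-2s}\Big(\frac{1}{2s}+\frac{1}{2s-k}\Big),
\end{equation*}
using $\rho\simeq\ap{x}$ and $C_{d,s}\lesssim 1$. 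Adding the near and far bounds gives the claimed estimate. The factors $2^{2s}$ and $2^{2s-k}$ coming from $\rho=\ap{x}/2$ are bounded uniformly.

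The main obstacle is the uniformity of the constants in $s$. Three points need care. First, the comparability $\ap{\zeta}\simeq\ap{x}$ on the near region is exactly why $\rho$ must scale with $\ap{x}$ and not be a fixed radius. Second, the $1/(1-s)$ produced by the near integral must be cancelled by the vanishing of $C_{d,s}$ as $s\to1$. Third, the far-field integrals are where the unavoidable factors $1/(2s)$ and $1/(2s-k)$ appear. I would double-check the bound $C_{d,s}\lesssim s(1-s)$ via the $\Gamma$-function identities. This bound is also what makes the constant $(1-s)$ in \eqref{eq: tailpointwise} compatible with the use of the proposition in Step 1b of the proof of Theorem \ref{thm: lyapunov}.
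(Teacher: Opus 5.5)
Your proof is correct and follows the paper's sketch: a near/far split at scale $\ap{x}$, a Taylor bound on the near part, and the crude bound $\ap{x\pm z}^k\lesssim\ap{x}^k+|z|^k$ on the far part, which produces the $1/(2s)$ and $1/(2s-k)$ factors. Two of your refinements matter. Cutting at $|z|<\ap{x}/2$, instead of the paper's $\ap{y}\le\ap{x}$, is what actually guarantees $\ap{\zeta}\gtrsim\ap{x}$ along the Taylor segment; with the paper's cutoff the segment can pass through the origin when $y\approx -x$. Your explicit bound $C_{d,s}\lesssim s(1-s)$ supplies the cancellation of the $1/(1-s)$ from the near integral, which the sketch leaves implicit.
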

\begin{proof}[Sketch of the proof]
Split the integral into two parts: $\ap{y}\le \ap{x}$ and $\ap{y}> \ap{x}$.
For the first one, proceed with a Taylor expansion bound and notice that $\ap{\zeta}^{k-2}\lesssim \ap{x}^{k-2}$. For the second one
$$
\int_{\ap{y}\ge \ap{x}}\frac{\ap{x+y}^k}{|y|^{d+2s}}\lesssim \ap{x}^k\int_{\ap{y}\ge \ap{x}}|y|^{-(d+2s)}\d y +\int_{\ap{y}\ge \ap{x}}\ap{y}^{-(d+2s-k)}\lesssim \frac{1}{2s}\ap{x}^{k-2s}+\frac{1}{2s-k}\ap{x}^{k-2s}
$$
    
\end{proof}

\begin{teorema}[Semigroup Lyapunov condition.] \label{thm: semigrouplyap}
	Assume $u_0$ and $J^s$
	satisfy the hypotheses of Theorem \ref{thm: lyapunov}. For $k<2s$ and  $\f=\ap{x}^k$, the semigroup associated to
	equation \eqref{DFFP} satisfies a Lyapunov condition 
	\eqref{eq: semigroupLyap} with $V\equiv \f$ and the same $C_L$ and $\lambda_L$ of Theorem
	\ref{thm: lyapunov}.
\end{teorema}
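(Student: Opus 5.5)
The plan is to pass from the pointwise inequality $L_\e^*\f\le C_L-\lambda_L\f$ of Theorem \ref{thm: lyapunov} to the integrated semigroup bound \eqref{eq: semigroupLyap} by a Gronwall argument on the weighted moment $m(t):=\ird \f(x)\,u(t,x)\d x$, where $u(t)=S_t u_0$. By linearity and positivity preservation, it is enough to consider $0\le u_0\in L^1_k$. Positivity is visible from the Wild sum representation in Theorem \ref{Thm:wild}, where every term is nonnegative. For a signed $\mu$ we split $\mu=\mu^+-\mu^-$; since $|S_t\mu|\le S_t\mu^++S_t\mu^-$, adding the two bounds gives the estimate for $\norm{S_t\mu}_V$ with $\norm{\mu}=\norm{\mu^+}+\norm{\mu^-}$. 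Mass is conserved, so $\ird u(t)=\ird u_0$, because $\ird J^s=1$ and the drift is in divergence form.

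The core step is to justify
\[
\ddt m(t)=\ird L_\e^*\f\, u(t)\d x\le C_L\ird u_0-\lambda_L m(t),
\]
and then integrate to get $m(t)\le e^{-\lambda_L t}m(0)+\frac{C_L}{\lambda_L}(1-e^{-\lambda_L t})\ird u_0$. This is exactly \eqref{eq: semigroupLyap} with $C=C_L$ and $\lambda=\lambda_L$.

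The main obstacle is the rigorous derivation of the moment identity. The solution is only mild in $C([0,\infty),L^1_k)$, and $\f$ grows like $\ap{x}^k$, so $\f$ is not a bounded test function. I would handle both issues as follows.
\begin{enumerate}[(i)]
\item Use the mild formulation \eqref{mild1} against the transport semigroup. Its dual acts as $T_t^*\psi(x)=e^{-t/\e^{2s}}\psi(e^{-t}x)$, and the convolution term can be dualised explicitly. This gives the weak form $\ird \psi u(t)=\ird\psi u_0+\int_0^t\ird (L_\e^*\psi)u(\tau)\d\tau$ for $\psi\in C_c^2$.
\item Approximate $\f$ by truncations $\f_R(x):=\f(x)\chi(x/R)$, with $\chi$ a smooth cutoff equal to $1$ on $B_1$. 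Alternatively one can use $\f_\eta=(1+|x|^2)^{k/2}(1+\eta|x|^2)^{-k/2}$, which is bounded and obeys the same type of Lyapunov bound with constants uniform in $\eta$.
\item Pass to the limit $R\to\infty$ by dominated convergence. This is possible because $u(\tau)\in L^1_k$ uniformly on $[0,t]$ by the well-posedness estimate of Theorem \ref{thm:wellposed-fractional}. Moreover, $|L_\e^*\f_R|\lesssim \ap{x}^k$ uniformly in $R$: the drift part is bounded by $|x||\nabla\f_R|\lesssim\ap{x}^k$, and the nonlocal part is controlled using $\f_R(x+\e y)\le C(\ap{x}^k+\ap{\e y}^k)$ together with $k<2s$, so that $J^s\in L^1_k$.
\end{enumerate}

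Of these, the uniform bound on the truncated nonlocal term is the step I expect to need the most care. If it proves delicate, I would apply Gronwall directly to the truncated quantity $\ird\f_\eta u$, for which the Lyapunov inequality holds with constants independent of $\eta$, and only then let $\eta\to0$ by monotone convergence. Since $C_L$ and $\lambda_L$ come from Theorem \ref{thm: lyapunov}, they are independent of $\e$ and $s$, and this independence is inherited by the semigroup condition.
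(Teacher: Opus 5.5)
Your proposal is correct and is essentially the paper's argument: Gronwall applied to $m(t)=\ird\f\,u(t)\d x$ using the pointwise bound $L_\e^*\f\le C_L-\lambda_L\f$, with the moment identity justified by cutoffs. You do this through the weak form of the mild solution and dominated convergence, whereas the paper uses $u_0\in\mathcal{D}(L_\e^s)$ plus density. You also usefully make explicit the positivity, mass conservation and $\mu=\mu^+-\mu^-$ splitting that the paper's Gronwall step silently needs.

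One correction: discard the fallback with $\f_\eta=\ap{x}^k(1+\eta|x|^2)^{-k/2}$. It does \emph{not} satisfy the Lyapunov bound with $\eta$-uniform constants: as $|x|\to\infty$ one has $\f_\eta\to\eta^{-k/2}$ while $L_\e^*\f_\eta\to0$, which forces $C\ge\lambda\eta^{-k/2}$. You do not need it anyway, since $|L_\e^*\f_R|\lesssim\ap{x}^k$ uniformly in $R$ follows immediately from $0\le\f_R\le\f$, the bound you already wrote, and $J^s\in L^1_k$, with no cancellation required.
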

\begin{proof}
	Let  $u_0$ be in $\mathcal{D}(L^s_\e)$. By standard semigroup theory
	$ u(t)\in \mathcal{C}^1([0,\infty);  L^1_k)$. Hence, for regular $\f$ we may differentiate under the integral sign and obtain 
	\begin{equation*}
			\ddt \ird u(t,\cdot) \f(x)\d x= \ird \ddt u(t,\cdot) \f(x) \d x  \\
			=\ird (L_\e^s u) \f (x)
			\end{equation*}
			Integrating by parts (justified by standard cutoff/bump function argument)
			gives 
			$$
			\ird (L_\e^s u) \f (x)=\int u(L_\e^*\f).
			$$
	
Applying Gronwall Lemma with the $\mathcal{C}^1$ function $f(t):= \ird u(t)\f$:
	that is,$$
	f'(t)\le -\lambda_L f(t)+ C_L\implies f(t)\le e^{-\lambda t}f(0)+ \frac{C_L}{\lambda_L}(1-e^{-\lambda t}).$$
	
	Since the $\mathcal{D}(L_\e^s)$ is dense in $L^1_k$, we extend the result to  any $u_0\in L^1_k$ by standard density argument.
\end{proof}

\subsection{Harris's Condition}\label{subsec: positivity}
Harris condition \ref{eq: HLB} is a uniform  local positivity condition at a fixed positive time. We are able to show a stronger result,i.e. a local positivity property at any positive time $t>0$.
\begin{teorema}[Lower bound]\label{posit}
		Let $(J^s)$ be a family of kernels satisfying Hypothesis \ref{ass: sunif}. Then, the solution $u$ of equation \eqref{DFFP} with a probability initial condition $u_0$ is uniformly positive on compact sets for all $\e\in(0,1],$ and $s\in(1/2,1)$. That is, for all $t \ge 0$, and $R_1, R_2 > 0$, there exists a constant $\alpha > 0$, independent of $\e$ and $s$, such that the solution of \eqref{DFFP} with initial data $u_0 \in L^1_k$ satisfies
		$$
		u(t,x) \ge \alpha \int_{B_{R_2}} u_0 \qquad \text{for all } x \in B_{R_1}.
		$$	
\end{teorema}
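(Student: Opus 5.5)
The plan is to use the Wild sum representation of Theorem \ref{Thm:wild}. Every term in it is nonnegative, so it suffices to bound below a single term, or a block of terms, with $n$ of order $N\sim t/\e^{2s}$. For fixed $n$ we look at
$$
e^{(d-\e^{-2s})t}\,\e^{-2sn}\int_{0\le t_1\le\dots\le t_n\le t}\Jn*u_0(e^t x)\,\d t_1\cdots\d t_n .
$$
Here $\Jn=J_{\e e^{t_n}}*\cdots*J_{\e e^{t_1}}$ is a convolution of rescaled copies of $J^s$ with scales $\sigma_i=\e e^{t_i}\in[\e,\e e^t]$. After factoring out $\e$, this fits Theorem \ref{thm: BE} with $l=1$ and $L=e^t$. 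Writing $\bar\sigma^{2s}=\frac1n\sum\sigma_i^{2s}$, the theorem gives, for $n\ge N$,
$$
\Jn(y)\ge (n^{1/2s}\bar\sigma)^{-d}\Big[G^s\Big(\frac{y}{n^{1/2s}\bar\sigma}\Big)-\frac{C_{BE}}{n^{\delta/2s}}\Big],
$$
with $C_{BE}$ independent of $s$. This independence holds because Hypothesis \ref{ass: sunif} gives $\delta$, $C_0$ and the $L^p$ bound uniformly in $s$, and Lemma \ref{lemfou} then gives $r$ and $\kappa$ uniformly as well.

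Next we choose which terms to keep. Let $n$ range over the window $[\frac{t}{2\e^{2s}},\frac{2t}{\e^{2s}}]$. On this window the effective scale satisfies $n^{1/2s}\bar\sigma\in[c_1(t),c_2(t)]$ uniformly in $\e$, because $\e^{2s} n$ is of order $t$ and $\bar\sigma^{2s}/\e^{2s}\in[1,e^{2st}]$. We take $x\in B_{R_1}$ and $y\in B_{R_2}$, so that $e^tx-y$ stays in a fixed ball $B_{e^tR_1+R_2}$, and the rescaled argument stays in a ball of radius $(e^tR_1+R_2)/c_1(t)$. On that ball $G^s$ is bounded below by a positive constant $g_*(t)$ that is uniform in $s\in(1/2,1)$. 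This follows from Proposition \ref{prop: propSL}(v) together with continuity of $G^s$ in $s$ up to $s=1$ (item (iii)); the convergence $\hat G^s\to\hat G$ in $L^1$ gives locally uniform convergence. For $\e$ small, $n$ is large, so the error $C_{BE}n^{-\delta/2s}\lesssim(\e^{2s}/t)^{\delta/2s}$ falls below $g_*/2$. Hence $\Jn(e^tx-y)\ge c(t)$ for every term in the window.

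Summing over the window, the time-simplex volume $t^n/n!$ combined with the Poisson weight gives
$$
e^{-t/\e^{2s}}\sum_{n\in\text{window}}\frac{(t/\e^{2s})^n}{n!}\ge \tfrac12
$$
for $\e$ small. This is a Poisson concentration bound and is uniform in $s$ because only $\lambda=t\e^{-2s}\ge t$ enters. We conclude $u(t,x)\ge e^{dt}\tfrac12 c(t)\int_{B_{R_2}}u_0$ for all $\e\le\e_0(t)$, with $\e_0$ independent of $s$.

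The main obstacle is the regime $\e\in[\e_0(t),1]$, where $n\sim t/\e^{2s}$ is bounded and the Berry--Esseen theorem does not apply. I would handle it as in \citep{canizotassi2024}. First prove positivity for small times by a direct argument: keep the $n=N$ term for a fixed large $N$, where $J^{*N}$ of $L^p$ densities is continuous and positive near the origin with bounds uniform in $s$ (again from Theorem \ref{thm: BE} at fixed $N$ and scales bounded above and below). Then propagate positivity to any $t$ using the semigroup property, the spreading effect of the drift, and repeated convolution, which enlarges the support by a fixed amount each time. Uniformity in $s$ at this step rests on the uniform Fourier bounds of Lemma \ref{lemfou}. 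The final constant $\alpha$ is the minimum of the constants from the two regimes.
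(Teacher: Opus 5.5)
\textbf{Small $\e$.} This part (window $n\asymp t/\e^{2s}$, Theorem \ref{thm: BE} with $l=1$, $L=e^t$, a uniform lower bound on $G^s$ on a ball, Poisson concentration) is essentially the paper's Case 1, via Lemmas \ref{lemma: small eps} and \ref{2n}, and it is fine. One small slip: the Poisson bound needs $\lambda=t\e^{-2s}\ge t/\e$ to be large, not merely $\lambda\ge t$. That still holds for small $\e$ uniformly in $s$.

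\textbf{The gap: the regime $\e\in[\e_0(t),1]$.} Here you misdiagnose the difficulty. You are not obliged to use terms with $n\sim t/\e^{2s}$. For any fixed $N$, the weight of the $N$-th Wild term satisfies
\[
e^{(d-\e^{-2s})t}\,\e^{-2sN}\,\frac{t^N}{N!}\;\ge\; e^{(d-\e_0^{-2})t}\,\frac{t^N}{N!}
\]
uniformly in $\e\in[\e_0,1]$ and $s\in(1/2,1)$. So going far into the Poisson tail only costs a constant depending on $t$, $N$ and $\e_0$.

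At such a fixed large $N$, Theorem \ref{thm: BE} already gives positivity on the whole ball $B_\eta$ with $\eta=e^tR_1+R_2$, not just near the origin. All scales satisfy $\sigma_i=\e e^{t_i}\ge\e_0$. Hence for $y\in B_\eta$ the rescaled argument $y/(N^{1/2s}\bar\sigma)$ lies in $B_{\eta/\e_0}$. On that ball $G^s\ge h_*:=\min_{|z|\le \eta/\e_0}H(z)>0$ uniformly in $s$, with $H$ as in Definition \ref{def: lbStables}. Take $N$ above the threshold of Theorem \ref{thm: BE} with $C_{BE}N^{-\delta/2}\le h_*/2$; this works uniformly in $s$ since $N^{-\delta/2s}\le N^{-\delta/2}$. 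Using $\bar\sigma\le e^t$, you get $J_\e^{t_1,\dots,t_N}\ge \frac{h_*}{2}(Ne^t)^{-d}$ on $B_\eta$ for all $0\le t_1\le\dots\le t_N\le t$. Combined with the weight bound above, this closes the case. It is exactly Lemma \ref{lemma: large eps} and Case 2 of the paper, and it uses only tools you already invoke.

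\textbf{Why the propagation step fails.} The argument you put in place of the fixed-$N$ bound is not a proof as it stands. The drift in \eqref{DFFP} is confining, not spreading. The transport part acts by $T_tu_0(x)=e^{(d-\e^{-2s})t}u_0(e^tx)$, which shrinks supports by the factor $e^{-t}$. Any enlargement of a positivity ball would therefore have to come from the convolutions alone, working against this contraction. You give no estimate showing that a fixed number of semigroup steps enlarges the ball by a fixed amount with constants uniform in $\e$ and $s$. Hypothesis \ref{ass: sunif} gives no pointwise lower bound on $J^s$, so such an estimate would itself have to go through Theorem \ref{thm: BE}. As written, the case $\e\ge\e_0$ is unproved. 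The fix is to drop the propagation step and use the direct fixed-$N$ bound above.
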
	

First, let us define the following function, which will be used to make the lower bound independent of $s$ when needed.
\begin{definizione}\label{def: lbStables} We define $H(x)$ as (any) function that is pointwise below $G^s$ for all $s\in(1/2,1)$. A concrete example is obviously $$
    H(x)=\min_{s\in[1/2,1]}G^s(x)
    $$
    which is strictly positive since $G^S$ is continuous in  $s$ and strictly positive on $\R^d$.
    Notice that we also included in the minimum, the cases $s=1/2$ and $s=1$ (resp. Cauchy and Gaussian distribution)
    A more explicit function can be for example $$
    H(x)=(4\pi)^{-d/2}\min\{e^{-R^2/4}, e^{-|x|^2/4}\}
    $$
    for $R$ large enough.
\end{definizione}

The proof of  Theorem \ref{posit} relies on the Wild sums representation and the $L^\infty$ version of the generalised Berry--Esseen theorem introduced earlier. As in our previous work \citep{canizotassi2024}, we distinguish between two regimes: one where $\e$ lies in a compact interval bounded away from zero, and another where $\e$ is small. These two cases are handled separately in the following two lemmas.

\begin{lemma}\label{lemma: small eps}
    Let $(J^s)$ be a family of kernels satisfying Hypothesis \ref{ass: sunif}. Then for all $t, \eta > 0$, there exists $\e_0 > 0$ such that for all $\e \in (0, \e_0)$, there exists a constant $A > 0$, independent of $\e$ and $s$, such that the following holds: for all integers $n$ satisfying
$$
\frac{t}{\e^{2s}} \le n \le 2\frac{t}{\e^{2s}},
$$
and for all sequences $0 \le t_1 \le t_2 \le \dots \le t_n \le t$, we have
$$
\Jn(x) \ge A >0 \qquad \text{for all } x \in B_\eta.
$$
\end{lemma}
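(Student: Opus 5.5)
The plan is to recognise $\Jn$ as a rescaled convolution of $n$ dilations of $J^s$ and to apply the uniform Berry--Esseen estimate of Theorem \ref{thm: BE}. Write $\sigma_j := \e e^{t_j}$, so that
$$\Jn = J^s_{\sigma_n}*\cdots*J^s_{\sigma_1} = f^{\sigma_1,\dots,\sigma_n}$$
with $f=J^s$. Since $0\le t_j\le t$, we have $\e \le \sigma_j\le \e e^t$. The ratio $L/l = e^t$ is therefore independent of $\e$, and this ratio is the only thing the Berry--Esseen constant sees, since the statement is scale invariant. To make this explicit, I would apply Theorem \ref{thm: BE} with the normalised scales $\sigma_j/\e\in[1,e^t]$, i.e. $l=1$, $L=e^t$. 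The normalised density $\Jnorm$ is unchanged by the global factor $\e$ because $\bar\sigma$ scales accordingly. The theorem then gives, for $n\ge N(p)$,
$$\norm{\Jnorm - G^s}_{L^\infty}\le C_{BE}\, n^{-\delta/2s}\le C_{BE}\, n^{-\delta/2},$$
with $C_{BE}$ independent of $s$. This independence follows from Hypothesis \ref{ass: sunif}(ii) and from Lemma \ref{lemfou}, whose constants $r,\kappa$ depend only on $C_0,\delta,\norm{J^s}_{L^p}$. Here I assume $\norm{J^s}_{L^p}$ is bounded uniformly in $s$, which is implicit in the hypothesis.

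Next I undo the normalisation. By definition,
$$\Jn(x) = (\bar\sigma^{2s}n)^{-d/2s}\,\Jnorm\bigl(x/(n^{1/2s}\bar\sigma)\bigr).$$
In the range $t/\e^{2s}\le n\le 2t/\e^{2s}$ and with $\bar\sigma\in[\e,\e e^t]$, we get
$$\bar\sigma^{2s} n\in[t,\,2te^{2st}]\subset[t,\,2te^{2t}].$$
So the scale $\lambda:=n^{1/2s}\bar\sigma$ lies in a compact interval $[\min(t^{1/2s}),\max((2t)^{1/2s}e^t)]$ bounded above and below uniformly in $s\in(1/2,1)$; for instance one can use $\min(t,t^2)^{1/2}\le\lambda\le \max(2t,(2t)^2)^{1/2}e^t$. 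Hence for $x\in B_\eta$ the point $y=x/\lambda$ lies in a fixed ball $B_{\eta'}$, with $\eta'$ depending only on $t,\eta$. Using Definition \ref{def: lbStables},
$$\Jnorm(y)\ge G^s(y)-C_{BE}n^{-\delta/2}\ge \min_{B_{\eta'}}H - C_{BE}n^{-\delta/2}.$$

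Finally I choose $\e_0$. Since $n\ge t\e^{-2s}\ge t\e^{-1}$ for $\e<1$, requiring $\e<\e_0$ makes $n\ge N$ and $C_{BE}n^{-\delta/2}\le \tfrac12\min_{B_{\eta'}}H$, uniformly in $s$. This yields $\Jnorm\ge\tfrac12\min_{B_{\eta'}}H$ on $B_{\eta'}$. Multiplying by the prefactor $(\bar\sigma^{2s}n)^{-d/2s}\ge (2te^{2t})^{-d/2s}\ge \min(1,(2te^{2t})^{-d})$ gives the constant
$$A = \tfrac12\min(1,(2te^{2t})^{-d})\,\min_{B_{\eta'}}H>0,$$
which is independent of $\e$, $s$, $n$ and the times $t_j$.

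The main obstacle is verifying that Theorem \ref{thm: BE} really applies with constants uniform in $s$ and with arbitrary (non-identical) scales depending on the $t_j$. In particular, the high-frequency bound $\kappa$ from Lemma \ref{lemfou}(ii) must be uniform over the family $J^s$. I would handle this by checking that $\kappa$ depends only on $r$, $\norm{J^s}_{L^p}$ and a moment/entropy bound. The moment bound of order $k<1$ is controlled uniformly by Hypothesis \ref{ass: sunif}(iii), since $(1-s)|x|^{-d-2s}$ has uniformly bounded $k$-moment for $k<1$ and $s>1/2$. A secondary check is that the exponent $2s$ in the normalisation does not spoil the compactness of the scale $\lambda$ as $s$ varies, which is handled by the explicit interval above.
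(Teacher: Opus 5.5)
Your proposal is correct and follows the same route as the paper. You apply the uniform-in-$s$ Berry--Esseen Theorem~\ref{thm: BE} to the normalised convolution, and undo the normalisation using $\bar\sigma^{2s}n\in[t,2te^{2t}]$ to control both the prefactor and the rescaled argument. You then bound $G^s$ below by $H$ and take $\e_0$ small enough that the Berry--Esseen error is at most half of that lower bound.

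Your bookkeeping is slightly cleaner than the paper's in three places. Scale invariance lets you take $l=1$, $L=e^t$. Choosing $\e_0$ via $n\ge t/\e$ makes it uniform in $s$. Minimising $H$ over a fixed ball makes $A$ independent of the $t_j$. Like the paper, your argument tacitly needs $\norm{J^s}_{L^p}$ bounded uniformly in $s$ for $C_{BE}$ to be $s$-independent.
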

 \begin{proof}
Let $N$ be the threshold from Theorem \ref{thm: BE}. Define $\e_1 := (t/N)^{1/2s}$, ensuring $n \ge t/\e^{2s} \ge N$ for all $\e < \e_1$.

Define $$
\bar{\tau}^{2s}=\frac{1}{n}\sum_{i=1}^n (e^{t_i})^{2s}
$$

Since $C_{BE}$ and $\delta$ are $s$-independent (by Hypothesis \ref{ass: sunif}), and $t$ is fixed, we can choose $\e_2$ small enough such that:
$$\frac{C_{BE}}{t^{\delta/2s}}\e_2^{\delta} \le \frac{1}{2}H\left(\frac{\eta}{t^{1/2s}\bar{\tau}}\right).$$

Take $\e_0 := \min\{\e_1, \e_2\}$. 

Then for $\e < \e_0$, and $x \in B_\eta$, we use the GCLT estimate:
\begin{align*}
    \Jn(x) &\ge (\e^{2s}n\bar{\tau}^{2s})^{-d/2s} \left[ G^s\left(\frac{x}{n^{1/2s}\e\bar{\tau}}\right) - \frac{C_{BE}}{n^{\delta/2s}} \right] \\
    &\ge (2te^{2st})^{-d/2s} \left[ G^s\left(\frac{\eta}{t^{1/2s}\bar{\tau}}\right) - \frac{C_{BE}}{t^{\delta/2s}}\e^\delta \right] \\
    &\ge (2te^{2t})^{-d/2s} \frac{1}{2} H\left(\frac{\eta}{t^{1/2s}\bar{\tau}}\right).
\end{align*}

Finally, note that $(2te^{2t})^{-d/2s} \ge \min\{ (2te^{2t})^{-d}, (2te^{2t})^{-d/2} \}$, 

Since $H$ is a fixed, radially decreasing function, and $t^{1/2s}$ is bounded between $\sqrt{t}$ and $t$, we have $H(\frac{\eta}{t^{1/2s}\bar{\tau}}) \ge \min\{ H(\frac{\eta}{t\bar{\tau}}), H(\frac{\eta}{\sqrt{t}\bar{\tau}}) \}$ (depending whether $t\ge 1$).
Thus

$$
\Jn(x)\ge \frac{1}{2}\min\{ (2te^{2t})^{-d}, (2te^{2t})^{-d/2} \}\min\{ H(\frac{\eta}{t\bar{\tau}}), H(\frac{\eta}{\sqrt{t}\bar{\tau}}) \}:=A
$$

This provides a constant $A$ depending only on $t, \eta, d, C_{BE}$ and the bounds of $H$, but independent of $s$ and $\e$.

\end{proof}

\begin{lemma}\label{lemma: large eps}
     Let $(J^s)$ be a family of kernels satisfying Hypothesis \ref{ass: sunif}, and assume $\e\in[\e_0,1]$, for some $\e_0$. Then for all $t>0,\eta>0$ there exists a constant $B>0$ ---independent of $s$ and $\e$ ---such that 
    $$
    \Jn(x)>\frac{B}{n^{d/2s}}\qquad\text{for all }x\in B_{\eta}\text{ and }0\le t_i\le \dots\le t_n\le t
    $$
\end{lemma}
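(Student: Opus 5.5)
The plan is to reduce to the generalised Berry--Esseen estimate of Theorem \ref{thm: BE}, as in Lemma \ref{lemma: small eps}, but now keeping the explicit factor $n^{-d/2s}$ instead of using $n\sim t/\e^{2s}$. Write $\sigma_i:=\e e^{t_i}$, so that $J_{\e e^{t_i}}=J^s_{\sigma_i}$. Since $\e\in[\e_0,1]$ and $0\le t_i\le t$, we have
$$l:=\e_0\le \sigma_i\le e^{t}=:L,$$
with $l,L$ independent of $s$ and $\e$. Set $\bar\sigma^{2s}=\frac1n\sum_i\sigma_i^{2s}\in[l^{2s},L^{2s}]$. By the definition of $\mathfrak f_n$ in Theorem \ref{thm: BE},
$$\Jn(x)=(\bar\sigma^{2s}n)^{-d/2s}\,\mathfrak f_n\Big(\frac{x}{n^{1/2s}\bar\sigma}\Big)\ge (\bar\sigma^{2s}n)^{-d/2s}\Big[G^s\Big(\frac{x}{n^{1/2s}\bar\sigma}\Big)-\frac{C_{BE}}{n^{\delta/2s}}\Big]$$
for $n\ge N$. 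The constant $C_{BE}$ depends only on $l,L,p,d$ and quantities that are uniform in $s$ by Hypothesis \ref{ass: sunif}.

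For $x\in B_\eta$ the argument satisfies $|x|/(n^{1/2s}\bar\sigma)\le \eta/l$. I will use the $s$-uniform minorant $H$ of Definition \ref{def: lbStables}, taking it radially nonincreasing, as in the explicit example given there. This gives $G^s(x/(n^{1/2s}\bar\sigma))\ge H(\eta/l)>0$. Next I choose $N'\ge N$, independent of $s$, such that $C_{BE}N'^{-\delta/2s}\le C_{BE}N'^{-\delta/2}\le \tfrac12 H(\eta/l)$; here I use $\delta/2s\ge\delta/2$. Then for every $n\ge N'$,
$$\Jn(x)\ge \tfrac12 H(\eta/l)\,\bar\sigma^{-d}\,n^{-d/2s}\ge \tfrac12 H(\eta/\e_0)\,e^{-dt}\,n^{-d/2s},$$
which is the claimed bound with $B=\tfrac12 H(\eta/\e_0)e^{-dt}$.

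The main obstacle is the finitely many indices $n<N'$, where Theorem \ref{thm: BE} gives nothing. There the lower bound must come from positivity of the kernels themselves. Hypothesis \ref{ass: sunif} does not explicitly state such positivity, although all the examples in Section \ref{sec: examples} have it. I would first try to extract from the hypotheses (or add as a mild assumption) a uniform bound $J^s\ge c_0$ on some ball $B_{r_0}$, with $c_0,r_0$ independent of $s$.

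Granting this, each factor satisfies $J^s_{\sigma_i}\ge c_0\sigma_i^{-d}\mathbb 1_{B_{\sigma_i r_0}}\ge c_0L^{-d}\mathbb 1_{B_{l r_0}}$. Convolving $n$ such indicator lower bounds gives a positive lower bound on $B_{c\,n\,l r_0}$ with an explicit constant, but only on a ball growing linearly in $n$. This covers $B_\eta$ only when $n\gtrsim \eta/(\e_0 r_0)$. If $N'$ exceeds the indices not covered this way, there remains a gap of small $n$ where neither argument applies.

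In that case I would fall back on observing that in the application (the proof of Theorem \ref{posit} via the Wild sum of Theorem \ref{Thm:wild}) only a lower bound on the terms with $n\ge N'$ is needed. The remaining terms are nonnegative and can be discarded. So I would state and use the lemma for $n\ge N'$, where the Berry--Esseen argument above is complete and uniform in $s$ and $\e$.
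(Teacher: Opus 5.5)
Your proposal is correct and follows the paper's proof. You apply Theorem \ref{thm: BE} with $\sigma_i=\e e^{t_i}\in[\e_0,e^t]$ for $n\ge N$, bound the rescaled stable density from below on $B_\eta$, and absorb the $C_{BE}n^{-\delta/2s}$ error. The paper's proof likewise only establishes the bound for $n\ge N$, which is all that Theorem \ref{posit} uses, so your restriction to $n\ge N'$ is exactly the intended content of the lemma.

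Your use of the $s$-uniform minorant $H$ together with $N'^{-\delta/2s}\le N'^{-\delta/2}$ is a genuine improvement. It makes $B$ and $N'$ independent of $s$, which the paper's choices $B\propto G^s(\eta/\e_0)$ and $N_2=N_2(s)$ do not explicitly ensure.
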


\begin{proof}
	Let $N=\max\{N_1,N_2\}$,
	where $N_1$ is the threshold for which estimate in Theorem \ref{thm: BE} holds, and $N_2$ is such that  $$G^s\Big(\frac{\eta}{\e_0}\Big)-\frac{C_{BE}}{N_2^{\delta/2s}}\ge\frac{1}{2}G^s\Big(\frac{\eta}{\e_0}\Big).$$
	Then for $n\ge N$, applying Berry -- Esseen CLT, we obtain
	
	\begin{multline*}
			\Jn (x)
			\ge (\e^{2s}
			n\bar\tau^{2s})^{-d/2s}\Big[G^s\Big(\frac{x}{n^{1/2s}\e\bar\tau}\Big)-\frac{C_{BE}}{n^{\delta/2s}}\Big]
			\\
			 \ge \frac{1}{n^{\frac{d}{2s}}}\Big(\frac{1}{2^{1/2s}e^t}\Big)^d\Big[G^s\Big(\frac{\eta}{\e_0}\Big)-\frac{C_{BE}}{N_2^{\delta/2s}}\Big]\\
			 \ge \frac{1}{n^{\frac{d}{2s}}}\Big(\frac{1}{2^{1/2s}e^t}\Big)^d\frac{1}{2}G^s\Big(\frac{\eta}{\e_0}\Big)=:\frac{B}{n^{\frac{d}{2s}}}.
	\end{multline*}   
\end{proof}

We recall the following trivial lemma on Poisson distribution, already proved in \citep{canizotassi2024}
\begin{lemma}
  \label{2n}
  There exists an explicit constant $C_L$ such that
  $$s_m := e^{-m}\sum_{n=m}^{2m}\frac{m^n}{n!}\ge C_L
  \qquad \text{for all $m \geq 1$}$$ 
\end{lemma}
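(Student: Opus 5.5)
The plan is to read $s_m$ probabilistically. If $X_m$ is a Poisson random variable of parameter $m$, then $e^{-m}m^n/n! = \mathbb{P}(X_m=n)$, so
$$
s_m=\mathbb{P}(m\le X_m\le 2m)=1-\mathbb{P}(X_m<m)-\mathbb{P}(X_m>2m).
$$
It is therefore enough to bound the two tail probabilities away from $1$ jointly, uniformly in $m$. I will treat large $m$ by a normal approximation and small $m$ by a finite check.

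\emph{Upper tail.} The right tail is controlled by Chebyshev's inequality. Since $\E X_m=\Var X_m=m$,
$$
\mathbb{P}(X_m>2m)\le \mathbb{P}(|X_m-m|\ge m)\le \frac{1}{m}.
$$
This already gives $\mathbb{P}(X_m>2m)\le 1/8$ as soon as $m\ge 8$.

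\emph{Lower tail.} The left tail needs $\mathbb{P}(X_m<m)$ to be bounded strictly below $1$, and this is the only delicate point, since Chebyshev alone gives nothing there. I would write $X_m=Y_1+\dots+Y_m$ with $Y_i$ i.i.d.\ Poisson of parameter $1$, which is centred after subtracting $1$, with unit variance and finite third absolute moment $\rho=\E|Y_1-1|^3$. The classical Berry--Esseen theorem recalled in Section \ref{sec: preliminar} then gives
$$
\mathbb{P}(X_m<m)=\mathbb{P}\Big(\tfrac{X_m-m}{\sqrt m}<0\Big)\le \Phi(0)+\frac{C\rho}{\sqrt m}=\frac12+\frac{C\rho}{\sqrt m}.
$$
Choosing an explicit $m_0\ge 8$ with $C\rho/\sqrt{m_0}\le 1/8$, we get for all $m\ge m_0$
$$
s_m\ge 1-\Big(\frac12+\frac18\Big)-\frac18=\frac14.
$$

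\emph{Small $m$.} For the finitely many $1\le m<m_0$, each $s_m$ is strictly positive, because it contains the term $e^{-m}m^m/m!>0$. Hence
$$
C_L:=\min\Big\{\tfrac14,\ \min_{1\le m<m_0}s_m\Big\}>0
$$
works. If a fully explicit constant is preferred, one can replace the minimum by the crude bound $s_m\ge e^{-m}m^m/m!\ge e^{-m_0}$ for $m<m_0$, using $m^m/m!\ge1$. Alternatively, one could quote the known fact that the median of a Poisson$(m)$ law with integer $m$ equals $m$, i.e.\ $\mathbb{P}(X_m\ge m)\ge 1/2$, which yields $s_m\ge \tfrac12-\tfrac1m$ directly. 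I would nonetheless keep the Berry--Esseen route, since it is self-contained given the earlier section.
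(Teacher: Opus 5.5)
Your proof is correct. The Poisson reading $s_m=\mathbb{P}(m\le X_m\le 2m)$ combines three bounds into a complete argument:
\begin{itemize}
\item Chebyshev for the right tail, $\mathbb{P}(X_m>2m)\le 1/m$;
\item Berry--Esseen for the left tail, $\mathbb{P}(X_m<m)\le\frac12+C\rho/\sqrt m$, where $\rho=\E|Y_1-1|^3=1+2/e$, so $m_0$ is explicit;
\item the positive term $n=m$ for the finitely many small $m$.
\end{itemize}
The paper gives no proof at this point. It only refers to \citep{canizotassi2024} and calls the lemma a trivial fact about the Poisson distribution. So your probabilistic interpretation is the intended one, and you are filling in the omitted details.

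What your route costs is a heavy tool and a poor constant. With a Berry--Esseen constant around $0.47$ one needs $m_0$ in the forties, so your fallback $e^{-m_0}$ is tiny.

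A direct count is both more elementary and sharper. For each of the at least $\sqrt m$ integers $n\in[m,m+\sqrt m]\subset[m,2m]$,
$$
e^{-m}\frac{m^n}{n!}=e^{-m}\frac{m^m}{m!}\prod_{j=1}^{n-m}\frac{m}{m+j}\ge \frac{1}{e\sqrt m}\Big(1+\frac{1}{\sqrt m}\Big)^{-\sqrt m}\ge \frac{1}{e^2\sqrt m},
$$
using $m!\le e\,m^{m+1/2}e^{-m}$. Summing over these $n$ gives $s_m\ge e^{-2}$ for all $m\ge 1$, with no case split.

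Your median alternative gives $s_m\ge\frac12-\frac1m$, which is informative only for $m\ge 3$. So $m=1,2$ would still need the trivial direct check.
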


We now have all the ingredients to prove Theorem \ref{posit}.

\begin{proof}
	[Proof of Theorem \ref{posit}]	
	Let $u(t,x)$ be the solution of \eqref{DFFP} with probability initial data $u_0$. By the Wild sums representation and since $u_0\ge0$ and $J^s_\e\ge0$, it follows that
	$$
	u(t,x)\ge e^{(d-\frac{1}{\e^{2s}})t}\sum_{n=1}^\infty \Big(\frac{1}{\e^{2s}}\Big)
	\int_0^t\int_0^{t_n}\cdots\int_0^{t_2}
	\int_{\R^d}
	J_\e^{t_1,\dots,t_n}\big(e^t x - y\big)\,u_0(y)\,\d y
	 \d t_1\cdots \d t_n.
	$$
	Set
	$$
	\eta  :=  e^t R_1+R_2,
	$$
	such that for  $x\in B_{R_1}$ we have $e^t x-y\in B_\eta$.
	
	\medskip
	
	\noindent\textbf{Case 1: small $\mathbf{\e}$.}
	For $t$, $\eta$, let $\e_0$ be as in Lemma \ref{lemma: small eps}, and define
	$$
	\tilde\e_0  :=  \min\{\e_0,\sqrt{t}\}.
	$$
	If $0 < \e \le \tilde\e_0$, then Lemma \ref{lemma: small eps} gives a constant $A>0$ such that for every integer 
	$$
	m  =  \Bigl\lfloor \tfrac{t}{\e^{2s}}\Bigr\rfloor,
	\qquad
	m \le n \le 2m,
	\qquad
	y\in B_{R_2},
	$$
	we have
	$$
	J_\e^{t_1,\dots,t_n}(e^t x-y)
	\ge
	A
	$$
	Hence, 	using the estimate
	from Lemma \ref{2n}
	\begin{multline*}
		u(t,x) \ge	A\,e^{dt}e^{-\frac{t}{\e^{2s}}} \sum_{n=m}^{2m}
		\Big(\frac{t}{\e^{2s}}\Big)^n\frac{1}{n!}
		\int_{B_{R_2}}u_0(y) \d y
		\\
		\ge A\,e^{dt-1} e^{-m} \sum_{n=m}^{2m} \frac{m^n}{n!} \int_{B_{R_2}}u_0(y) \d y \ge
	A\,C_L\,e^{dt-1}
	\int_{B_{R_2}}u_0(y)\,dy.
\end{multline*}
	Set
	$$
	\alpha_1  :=  A\,C_L\,e^{dt-1}.
	$$
	
	\medskip
	
	\noindent\textbf{ Case 2: large $\e$.}
	If $\e \ge \tilde\e_0$, then in particular $\e\ge \e_0$ and we may apply Lemma \ref{lemma: large eps} that gives $N\in \N$  and $B>0$ such that for all $n\ge N$ and $y\in B_{R_2}$,
	$$
	\int_0^t\int_0^{t_n}\cdots\int_0^{t_2}
	J_\e^{t_1,\dots,t_n}(e^t x - y)\,\d t_1\cdots \d t_n
	 \ge  B
	\frac{t^n}{n!}.
	$$
	Therefore
	\begin{equation*}
		u(t,x)
		\ge
		B e^{(d-\frac1{\e_0^{2s}})t}
		\sum_{n=N}^\infty
		\frac{t^n}{n!}\,\frac{1}{n^{d/2s}}
		\int_{B_{R_2}}u_0(y)\,\d y
		\\
		\ge	B\,e^{(d-\frac1{\tilde\e_0^{2s}})t}
		\frac{t^N}{N!}\,\frac{1}{N^{d/2s}}
		\int_{B_{R_2}}u_0(y)\,\d y.
	\end{equation*}
	Set
	$$
	\alpha_2=B\,e^{(d-\frac1{\tilde\e_0^{2s}})t}
	\frac{t^N}{N!}\,\frac{1}{N^{d/2s}}
	$$
	
	\medskip
	
	Finally, let
	$$
	\alpha  := \min\{\alpha_1,\alpha_2\}.
	$$
	Then for every $0<\e\le1$ and all $x\in B_{R_1}$,
	$$
	u(t,x) \ge \alpha\int_{B_{R_2}}u_0(y)\,\d y,
	$$
	
\end{proof}

\paragraph*{Proof of Theorem \ref{thm: main1}}
\begin{proof}The proof follows directly from the application of Harris's Theorem  explained in Section \ref{harristheory}. Theorem \ref{thm: semigrouplyap} is the Lyapunov condition \eqref{eq: semigroupLyap} 
with $V(x) = \ap{x}^k$ and constants $C_L, \lambda_L > 0$ that are independent of $\e$ and $s$.
Next, we choose $R_1 > 0$ large enough so that the set $\mathcal{C} = \{x : V(x) \le R\}$ required by Theorem \ref{sHarris} is contained in $B_{R_1}$.
	From Theorem \ref{posit}, for all $t>0$, and all radii $R_1,R_2>0$, the operator $S_t$ satisfies Harris condition \eqref{eq: HLB}: for $u_0\in L^1_k$
	$$
	S_tu_0\ge \alpha \mathds{1}_{B_{R_1}}\int_{B_{R_2}}u_0(y)\d y
	$$
    with a constant which is independent of $\e$ and $s$. 
    Then, Theorem \ref{sHarris} ensures the existence and uniqueness of the equilibrium in $L^1_k$ and that $$\norm{u(t,\cdot)-F^s_\e}_{L^1_k} \le C e^{-\lambda t} \norm{u_0 - F^s_\e}_{L^1_k}, \quad \forall t \ge 0$$
    with $C$ and $\lambda$ independent of $\e$ and $s.$
\end{proof}


\subsection{Properties of the equilibrium}\label{subsec: shape}
\begin{teorema}\label{thm: regequi}
	Let $F^s_\e(x)$ denote the equilibrium solution of equation \eqref{DFFP}. Then $F^s_\e$ satisfies the following regularity properties:
	\begin{enumerate}[(i)]
		\item \textbf{Weak regularity:}
		\begin{enumerate}[(a)]
			\item $F^s_\e \in H^m(\mathbb{R}^d)$ for all $0<m < \dfrac{1}{\e^{2s}} - \dfrac{d}{2}$,
			\item $F^s_\e \notin H^m(\mathbb{R}^d)$ for all $m > \dfrac{1}{\e^{2s}} - \dfrac{d}{2}$.
		\end{enumerate}
		
		\item \textbf{Classical regularity:} 
		\begin{enumerate}[(a)]
			\item $F^s_\e \in \mathcal{C}^{\fl{m},m-\fl{m}}(\mathbb{R}^d)$ for all $0<m < \dfrac{1}{\e^{2s}}-d$,
			\item $F^s_\e \notin \mathcal{C}^{\fl{m},m-\fl{m}}(\mathbb{R}^d)$ for all $m > \dfrac{1}{\e^{2s}}$.
		\end{enumerate}

	\end{enumerate}
\end{teorema}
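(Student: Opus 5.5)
The plan is to work entirely from the Fourier representation \eqref{eq: eqfourier-fractional} of the equilibrium. Since $\hat J^s_\e(\xi)=\hat J^s(\e\xi)$, we have
\begin{equation*}
\log \hat F^s_\e(\xi)=\frac{1}{\e^{2s}}\int_0^\infty \big(\hat J^s(\e e^{-\tau}\xi)-1\big)\d\tau=\frac{1}{\e^{2s}}\int_0^{\e|\xi|}\frac{\hat J^s(r\omega)-1}{r}\d r ,\qquad \omega=\xi/|\xi|,
\end{equation*}
after the change of variable $r=\e e^{-\tau}|\xi|$. I will determine the exact algebraic decay of $\hat F^s_\e$ at infinity. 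Since $J^s\in L^1$, Riemann--Lebesgue gives $\hat J^s(\eta)\to0$, so $(\hat J^s(r\omega)-1)/r=-1/r+o(1/r)$ as $r\to\infty$. Near $r=0$ the integrand is $-r^{2s-1}+O(r^{2s+\delta-1})$ by \eqref{eq: expansion_uniform}, which is integrable. Splitting at $r=1$ then gives
\begin{equation*}
\log\hat F^s_\e(\xi)=-\frac{1}{\e^{2s}}\log(\e|\xi|)+o(\log|\xi|)+O(1),
\end{equation*}
so $|\hat F^s_\e(\xi)|=|\xi|^{-1/\e^{2s}+o(1)}$ as $|\xi|\to\infty$. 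To get a clean two-sided bound $c|\xi|^{-1/\e^{2s}}\le|\hat F^s_\e|\le C|\xi|^{-1/\e^{2s}}$ I would need $\int_1^\infty |\hat J^s(r\omega)|/r\,\d r<\infty$ uniformly in $\omega$. That is not automatic from $J^s\in L^p$ alone. The fallback is to track the $o(1)$ loss and state the regularity with strict inequalities, which is exactly how the theorem is phrased (open ranges and a gap at the endpoint). So the sharp rate only matters up to arbitrarily small losses in the exponent. Positivity of $\hat F$ is not needed: only its modulus enters.

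For \textbf{(i)}, I use $\|F\|_{H^m}^2\simeq\int(1+|\xi|^2)^m|\hat F|^2\d\xi$. The integrand behaves like $|\xi|^{2m-2/\e^{2s}+o(1)}$, and the integral converges iff $2m-2/\e^{2s}<-d$, i.e.\ $m<1/\e^{2s}-d/2$. This gives (a), and the divergence for $m>1/\e^{2s}-d/2$ gives (b). For (b) I need the lower bound $|\hat F|\ge |\xi|^{-1/\e^{2s}-\theta}$ for every $\theta>0$ and large $|\xi|$. This follows from the $o(\log|\xi|)$ control above, since the remainder term $\int_1^{\e|\xi|}\mathrm{Re}\,\hat J^s(r\omega)/r\,\d r$ is $o(\log|\xi|)$ by Riemann--Lebesgue (Cesàro mean in $\log r$). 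Small $|\xi|$ pose no problem because $|\hat F|\le1$.

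For \textbf{(ii)(a)}, I use the standard sufficient condition $(1+|\xi|)^m\hat F\in L^1$, which implies $F\in\mathcal C^{\fl m,m-\fl m}$ via the bound $|e^{ix\xi}-e^{iy\xi}|\le\min(2,|\xi||x-y|)$ applied to the derivatives of order $\fl m$. This requires $m-1/\e^{2s}<-d$, i.e.\ $m<1/\e^{2s}-d$, matching the stated range. For \textbf{(ii)(b)}, I argue by contradiction: if $F\in\mathcal C^{\fl m,m-\fl m}$ with $m>1/\e^{2s}$, then $F$ lies in the Besov space $B^{m}_{\infty,\infty}$ locally. I would use localised Fourier decay, testing $F$ against a smooth cutoff; because $F$ has tails, I use $F\chi$ with $\chi\in C_c^\infty$ and handle the tail separately. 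The Hölder class bounds dyadic Littlewood--Paley pieces by $2^{-jm}$, whereas the lower bound on $|\hat F|$, integrated over a dyadic annulus against a Schwartz test function, forces size at least about $2^{-j/\e^{2s}}$ for a suitable pairing. I expect (ii)(b) to be the main obstacle. The global non-compactness and the passage from pointwise Fourier lower bounds to a lower bound on a Littlewood--Paley block are delicate. I would try first to exploit the equation itself: $F=\e^{-2s}(J_\e*F)$ plus transport, which shows that the singular part of $F$ at the origin behaves like $|x|^{1/\e^{2s}-d}$, and hence fails to be Hölder of order above $1/\e^{2s}$.
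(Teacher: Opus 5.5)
Your part (i) is the paper's argument. Both start from the Fourier formula for the equilibrium and use Riemann--Lebesgue on $\hat J^s$ to get $|\hat F^s_\e(\xi)|=|\xi|^{-1/\e^{2s}+o(1)}$; the paper encodes the $o(1)$ through a radius $y_\delta$ beyond which $|\hat J^s|\le\delta$. Both then test integrability of $|\xi|^{2m}|\hat F^s_\e|^2$. Your (ii)(a) is correct and gives exactly the stated range: it uses $(1+|\xi|)^m\hat F^s_\e\in L^1$ and $|e^{ix\cdot\xi}-e^{iy\cdot\xi}|\le 2^{1-\gamma}|\xi|^{\gamma}|x-y|^{\gamma}$. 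The paper proves neither part of (ii) and only refers to its predecessor, so here you are more explicit than the source.

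The gap is (ii)(b), which you leave as a plan, and the step it rests on is not justified. A lower bound on $|\hat F^s_\e|$ alone does not give a lower bound on a pairing over a dyadic annulus. Hypothesis~\ref{ass: sunif} does not make $J^s$ even, so $\hat F^s_\e$ is complex with phase $\e^{-2s}\int_0^{\e|\xi|}\mathrm{Im}\,\hat J^s(r\omega)\,\d r/r$. This phase can vary over the annulus and produce cancellation. Your fallback through the equation needs the coefficient of the $|x|^{1/\e^{2s}-d}$ term to be nonzero, which you do not address.

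The missing idea is to align the test function with the phase of $F=F^s_\e$ at a single frequency, using a bump of fixed size. Take $\hat\psi(\xi)=\frac{\hat F(\xi_0)}{|\hat F(\xi_0)|}\chi\big((\xi-\xi_0)/\rho\big)$ with $0\le\chi\in C_c^\infty(B_1)$ and $\int\chi>0$. Then $\norm{\psi}_{L^1}=\norm{\check\chi}_{L^1}$, independently of $\xi_0$. For $|\xi_0|\gg\rho$ the support of $\hat\psi$ meets only $O(1)$ Littlewood--Paley blocks, those with $2^j\sim|\xi_0|$. So $F\in\mathcal{C}^{\fl m,m-\fl m}$, via $\norm{\Delta_jF}_{L^\infty}\lesssim 2^{-jm}$, gives $|\int F\overline{\psi}|\lesssim|\xi_0|^{-m}$.

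For the matching lower bound: since $2s>1$, $J^s$ has a finite first moment $M_1$, hence $|\nabla_\xi\log\hat F|\le\e^{1-2s}M_1$. Choosing $\rho\,\e^{1-2s}M_1\le 2/5$ gives $\mathrm{Re}\big(\hat F(\xi)\overline{\hat F(\xi_0)}\big)\ge\frac12|\hat F(\xi_0)|^2$ on the bump, so $|\int F\overline{\psi}|\gtrsim\rho^d|\hat F(\xi_0)|$. Combined with your bound $|\hat F(\xi_0)|\ge|\xi_0|^{-1/\e^{2s}-\theta}$, this forces $m\le 1/\e^{2s}+\theta$ for every $\theta>0$. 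That is (ii)(b), with no localisation or tail control of $F$ needed. When $J^s$ is even, $\hat F^s_\e>0$ and your annular pairing does work, even yielding the sharper threshold $1/\e^{2s}-d$.
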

\begin{proof}
We only present the proof of  $(i)$, as the proof of  $(ii)$ follows by a similar argument. The details of $(ii)$ can be found, with minor modifications  in \citep[Sec.~5]{canizotassi2024}, to incorporate the exponent $2s$, and the fractional (H\"older) regularity via Besov space embedding, which analogously follows from the decay of the Fourier transform.

Recall that the Fourier transform of the equilibrium is $$
\hat{F}^s_\e(\xi)=\exp\Big(\frac{1}{\e^{2s}}\int_0^\infty\zeta_\e^s(e^{-z}\xi)\d z\Big)
$$
where $\zeta_\e^s(\xi)=\hat{J_\e^s}(\xi)-1$ is the Fourier symbol of $\mathcal{A}^s_\e$.

Let us assume that $\zeta^s_\e$ is radially symmetric.
 If not, define
$\underline{\zeta}^s_\e$ and $\overline\zeta^s_\e$ by
\begin{equation*}
	\underline{\zeta}^s_\e(z)=\min_{|y|=|z|}\zeta^s_\e(y)\qquad \overline{\zeta}^s_\e(z)=\max_{|y|=|z|}\zeta^\e_s(y),
\end{equation*}
which are radially symmetric, and it suffices to carry out the analysis using them in place of $\zeta^s_\e$.
After a change of variable $y=e^{-z}|\xi|$, we obtain
$$
\hat{F}^s_\e(\xi)=\exp\Big(\frac{1}{\e^{2s}}\int_0^{|\xi|}\frac{\zeta_\e^s(y)}{y}\d y\Big).
$$
Since $\zeta_\e^s(\xi)=-\e^{2s}|\xi|^{2s}+R^s(\e\xi)$, close to $\xi=0$, one has, as expected
$$
\hat{F}_\e^s(\xi)\approx e^{-\frac{|\xi|^{2s}}{2s}}.
$$
Notice that since $J_\e^s\in L^1$, its Fourier transform is continuous and, by Riemann-Lebesgue Lemma, $J_\e^s(y)\to0$ as $|y|\to\infty$.
By Plancherel 
\begin{multline*}
\norm{F_\e^s}^2_{H^m}=(2\pi)^d\ird (1+|\xi|^2)^m|\hat{F}^s_\e(\xi)|^2\d \xi
\\
=(2\pi)^d\Big(\int_{|\xi|\le R} (1+|\xi|^2)^m|\hat{F}^s_\e(\xi)|^2\d \xi+\int_{|\xi|> R} (1+|\xi|^2)^m|\hat{F}^s_\e(\xi)|^2\d \xi\Big)
\end{multline*}
The "low-frequency" piece is always finite, so the question reduces to whether 
$$
\int_{|\xi|>R}|\xi|^{2m}|\hat{F}_\e^s|^2\d \xi
$$
converges or diverges.
\begin{enumerate}[(a)]
	\item Assume $m<\dfrac{1}{\e^{2s}}-\dfrac{d}{2}$ and choose $\delta>0$ so small  that 
	\begin{equation}\label{eq: deltabound}
		2m-\frac{2(1-\delta)}{\e^{2s}}+d<
	0
\end{equation}
Since $\hat{J}_\e^s(y)\to 0$ as $|y|$ gets larger, we
 define $y_\delta$  such that
 $$\hat{J}^s_\e(y)\le\delta\qquad\text{for all } |y|\ge y_\delta$$
 Thus taking $R>y_\delta$ (such that $|\xi|>y_\delta$), we can write
 $$
 \int_0^{|\xi|}\frac{\zeta_\e^s (y)}{y}\d y=\int_0^{y_\delta}\frac{\zeta_\e^s (y)}{y}\d y
+\int_{y_\delta}^{|\xi|}\frac{\zeta_\e^s (y)}{y}\d y \le C-(1-\delta)(\log|\xi|-\log{y_\delta})$$
which implies
$$
|\hat{F}_\e^s(\xi)|\le \tilde{C}|\xi|^{-\frac{1-\delta}{\e^{2s}}}
$$
Thus
$$\int_{|\xi|>R}|\xi|^{2m}|\hat{F}_\e^s|^2\d \xi\le (2\pi)^{-d}\tilde{C}\int_{|\xi|>R}|\xi|^{2m-\frac{2(1-\delta)}{\e^{2s}}}\d \xi= (2\pi)^{-d}\tilde{C}\int_R^\infty r^{2m-\frac{2(1-\delta)}{\e^{2s}}+d-1}\d r<\infty,
$$
 by  \eqref{eq: deltabound}.
 
 \item Assume $m>\dfrac{1}{\e^{2s}}-\dfrac{d}{2}$ Choose $\delta>0$ so small that
\begin{equation}\label{eq: deltabound2}
2m-\frac{2(1+\delta)}{\e^{2s}}+d>0,\end{equation}
and again define $y_\delta$ such that 
$$
\hat{J}^s(y)\ge-\delta\qquad\text{for all }|y|\ge y_\delta.
$$
Proceeding as before
$$
|\hat{F}_\e^s(\xi)|\ge |\xi|^{-\frac{1+\delta}{\e^{2s}}},
$$
and hence,

$$\int_{|\xi|>R}|\xi|^{2m}|\hat{F}_\e^s|^2\d \xi\ge (2\pi)^{-d}\tilde{C}\int_{|\xi|>R}|\xi|^{2m-\frac{2(1+\delta)}{\e^{2s}}}\d \xi= (2\pi)^{-d}\tilde{C}\int_R^\infty r^{2m-\frac{2(1+\delta)}{\e^{2s}}+d-1}\d r=\infty,
$$
by \eqref{eq: deltabound2}.

\end{enumerate}

\end{proof}
As in the case $s=1$, we have the following theorem for which we do not give the proof but refer again to \citep[Sec.5]{canizotassi2024}
\begin{teorema}
    Let $\Omega\subset\subset \R^d$ such that $0\notin\Omega$. Suppose (at least) one of the two following conditions holds
    \begin{enumerate}[(a)]
        \item $d=1$
        \item $J$ is radially symmetric.       
    \end{enumerate}
     then $F_\e\in \mathcal{C}^\infty(\R^d\setminus \{0\}).$
\end{teorema}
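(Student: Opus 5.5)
The plan is to follow the argument for $s=1$ in \citep[Sec.~5]{canizotassi2024} and work entirely with the Fourier representation of the equilibrium. The reduction is this: $F_\e$ is $\mathcal{C}^\infty$ away from the origin if, for every cutoff $\chi\in\mathcal{C}_c^\infty(\R^d\setminus\{0\})$ and every multi-index $\beta$, the function $x^\beta\chi F_\e$ lies in $L^\infty$. It is enough to show that $x^\beta F_\e$ is $\mathcal{C}^\infty$ on $\Omega$ for large $|\beta|$. Multiplication by $x^\beta$ becomes $(i\partial_\xi)^\beta$ on the Fourier side, so I aim to prove that $\partial_\xi^\beta\hat F^s_\e$ decays faster than any polynomial, or at least gains enough decay to be integrable against $|\xi|^j$, once $|\beta|$ is large relative to $j$.

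The computation starts from
\[
\hat F^s_\e(\xi)=\exp\Big(\frac{1}{\e^{2s}}\int_0^{|\xi|}\frac{\zeta^s_\e(y)}{y}\d y\Big),
\]
which holds in case (b) by radial symmetry and in case (a) directly, since there $\xi$ is a scalar and we can split $\xi>0$ from $\xi<0$. Write $\Phi(r)=\e^{-2s}\int_0^r\zeta^s_\e(y)y^{-1}\d y$. Then $\Phi'(r)=\e^{-2s}\zeta^s_\e(r)/r$. Since $|\zeta^s_\e|\le2$, each derivative of $\Phi$ beyond the first only involves derivatives of $\hat J^s_\e$ and powers of $1/r$. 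By Fa\`a di Bruno, $\partial_r^n\hat F=\hat F\cdot P_n(\Phi',\dots,\Phi^{(n)})$. For $r\ge1$ the key estimate is $|\Phi^{(j)}(r)|\lesssim r^{-j}$, modulo derivatives of $\hat J^s_\e$. This gives $|\partial_r^n\hat F(r)|\lesssim r^{-n}|\hat F(r)|\lesssim r^{-n-(1-\delta)/\e^{2s}}$, using the decay established in the proof of Theorem~\ref{thm: regequi}. So each $\xi$-derivative gains one power of decay. Converting radial derivatives to Cartesian ones costs only further powers of $1/|\xi|$ at large $|\xi|$, so $x^\beta F_\e$ has Fourier transform that is $O(|\xi|^{-|\beta|-c})$ at infinity, and hence lies in $\mathcal{C}^{|\beta|-d-c'}$. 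Near $\xi=0$ the expansion $\zeta^s_\e=-\e^{2s}|\xi|^{2s}+R$ produces the non-smooth term $|\xi|^{2s}$. This is exactly what limits the decay of $F_\e$ at infinity, and it does not affect smoothness on $\Omega$, which is compact and avoids $0$. To localise, I will split $\hat F=\hat F\psi+\hat F(1-\psi)$ with $\psi$ a low-frequency cutoff. The first piece gives a smooth function. For the second, I apply the derivative-gain argument and then divide by $x^\beta$ on $\Omega$, where $|x|\ge c>0$.

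I expect the main obstacle to be controlling the derivatives of $\hat J^s_\e$, which enter through $\Phi^{(j)}$. Hypothesis~\ref{ass: sunif} gives $J^s\in L^1\cap L^p$ but no moments of order $\ge2s$, so $\hat J$ is not differentiable to high order. I would first try to avoid differentiating $\hat J$ entirely. Fix $x\in\Omega$, and instead of differentiating, use the stationary equation in physical space together with the structure $F_\e=\e^{-2s}\int_0^\infty e^{-t/\e^{2s}}T$-type representation from the Wild sums, in the form $F_\e=\e^{-2s}\int_0^\infty e^{(d-\e^{-2s})t}(J_\e*F_\e)(e^t\cdot)\d t$. This expresses $F_\e$ near $x\neq0$ as an average over dilations along rays. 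The dilation average $r\mapsto\int_0^\infty e^{-at}g(e^tr)\d t$ regularises in the radial direction: its $r$-derivative can be computed by differentiating the dilation rather than $g$. This is exactly why the hypothesis requires $d=1$ or radial symmetry, because in those cases the ray direction is the only direction. Iterating this bootstrap gives $\mathcal{C}^\infty$ on $\R^d\setminus\{0\}$.
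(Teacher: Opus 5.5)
\paragraph{Verdict.} There is a genuine gap, and it is exactly the obstacle you flag: the physical-space bootstrap does not get around it. (The paper only points to the $s=1$ case, so I compare with what any proof must contain.)

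\paragraph{Why the bootstrap stops after one derivative.} Integrate by parts in $t$ in your representation $F_\e(x)=\e^{-2s}\int_0^\infty e^{-at}g(e^tx)\d t$, with $a=\e^{-2s}-d$ and $g=J_\e*F_\e$. This gives $r\partial_rF_\e=aF_\e-\e^{-2s}g$, which is just the stationary equation $x\cdot\nabla F_\e+(d-\e^{-2s})F_\e=-\e^{-2s}J_\e*F_\e$. This first-order ODE along rays gains \emph{one} radial derivative and no more. Already $(r\partial_r)^2F_\e$ contains $r\partial_r g$, and in general $F_\e$ is $C^{k+1}$ along rays away from $0$ only if $g$ is $C^k$ there. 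So ``iterating'' requires $g=J_\e*F_\e\in C^\infty$.

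You cannot get this by moving derivatives onto $F_\e$. Its global regularity is finite: by Theorem~\ref{thm: regequi}, $F_\e\notin H^m$ for $m>\e^{-2s}-d/2$. Localising does not help either, because $g(x)=\int J_\e(x-y)F_\e(y)\d y$ sees the values of $F_\e$ near the origin whenever $J_\e$ charges a neighbourhood of $x$. The derivatives have to fall on $J$. That means you need something like $J\in C^\infty$ with bounded derivatives, so that $\partial^\alpha g=(\partial^\alpha J_\e)*F_\e$ with $F_\e\in L^1$. This is not part of Hypothesis~\ref{ass: sunif} and is never used in your argument.

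Your Fourier route hits the same wall. The bound $|\Phi^{(j)}(r)|\lesssim r^{-j}$ needs symbol-type bounds $|\partial^k\hat J(\eta)|\lesssim|\eta|^{-k}$ at \emph{high} frequency. That is a smoothness condition on $J$, not a moment condition: the missing moments only affect $\hat J$ near $\xi=0$, which you have already cut off.

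\paragraph{Smoothness of $J$ is genuinely needed.} Without it the conclusion fails. Take $d=1$ and $J=(1-\mu)\varphi+\mu\,\frac{1}{2c}\mathds{1}_{[-c,c]}$, with $\varphi$ a smooth heavy-tailed density and $\mu\in(0,1)$. This is compatible with Hypothesis~\ref{ass: sunif}: for instance, replace the Gaussian core in the Student-$t$ example of Section~\ref{sec: examples} by a uniform density with the same $|\xi|^2$ coefficient. Then $(J_\e*F_\e)'$ is a smooth function plus $\frac{\mu}{2c\e}\big[F_\e(x+c\e)-F_\e(x-c\e)\big]$.

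Suppose $F_\e$ were $C^\infty$ on $\R\setminus\{0\}$. It could not then be $C^\infty$ near $0$: its Fourier transform decays only polynomially by Theorem~\ref{thm: regequi}, while the stationary equation makes its derivatives integrable at infinity. Differentiating $xF_\e'+(1-\e^{-2s})F_\e=-\e^{-2s}J_\e*F_\e$ near $x=c\e$ then gives a smooth left-hand side and a non-smooth right-hand side, a contradiction.

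\paragraph{How to close the argument under that assumption.} Once $J$ is assumed smooth, your representation finishes the proof immediately.
\begin{enumerate}[(i)]
\item $g\in C^\infty$, and $g$ is radial in case (b).
\item The formula $F_\e(r\omega)=\e^{-2s}r^{a}\int_r^\infty\rho^{-a-1}g(\rho\omega)\d\rho$ makes $F_\e$ continuous off the origin.
\item The relation $r\partial_rF_\e=aF_\e-\e^{-2s}g$ then bootstraps to $C^\infty$ in $r>0$.
\item Hypothesis (a) or (b) turns radial smoothness into smoothness on $\R^d\setminus\{0\}$, exactly as you observed.
\end{enumerate}
A minor point: in $d\ge2$, use $|x|^{2k}F_\e$ rather than $x^\beta F_\e$, since $x^\beta$ vanishes on coordinate hyperplanes and not only at $0$.
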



\section{Non singular to singular limit: $\e\to 0$}\label{sec: epsilon}
We now gather results on the convergence as $\e\to 0$ in $L^2$ spaces and then we will transfer the results to the $L^1_k$ spaces thanks to the following lemma

\begin{lemma}
	[Interpolation]
	Let $0<k<m$, $f\in L^2\cap L^1_m$.
	Then $$
	\norm{f}_{L^1_k}\le \norm{f}_{L^2}^\theta\norm{f}_{L^1_m}^{1-\theta}
	$$
	with \begin{equation}\label{eq: theta}
		\theta=\frac{m-k}{m+d/2}\end{equation}
\end{lemma}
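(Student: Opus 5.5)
The plan is to split the $L^1_k$ norm into a near region and a far region, with the cut-off radius $\rho>0$ left free until the end. Write
$$
\norm{f}_{L^1_k}=\int_{|x|\le \rho}\ap{x}^k|f|\d x+\int_{|x|>\rho}\ap{x}^k|f|\d x .
$$
The inner piece will be handled with Cauchy--Schwarz and the outer piece with the faster-growing weight $\ap{x}^m$. Optimising in $\rho$ at the end should turn the resulting sum into the product in the statement.

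\emph{Inner piece.} Since $k>0$, on $B_\rho$ (taking $\rho\ge1$) we have $\ap{x}^k\lesssim\rho^k$. Cauchy--Schwarz then gives
$$
\int_{|x|\le\rho}\ap{x}^k|f|\lesssim \rho^{k}|B_\rho|^{1/2}\norm{f}_{L^2}\lesssim \rho^{k+d/2}\norm{f}_{L^2}.
$$

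\emph{Outer piece.} For $|x|>\rho$ we use $\ap{x}^k=\ap{x}^m\ap{x}^{k-m}\le \rho^{k-m}\ap{x}^m$, which holds because $k<m$. Hence
$$
\int_{|x|>\rho}\ap{x}^k|f|\le \rho^{-(m-k)}\norm{f}_{L^1_m}.
$$

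\emph{Optimisation.} Adding the two bounds gives
$$
\norm{f}_{L^1_k}\lesssim \rho^{k+d/2}A+\rho^{-(m-k)}B, \qquad A=\norm{f}_{L^2},\ B=\norm{f}_{L^1_m}.
$$
The two terms balance when $\rho^{m+d/2}=B/A$. With this choice both terms equal
$$
A^{\frac{m-k}{m+d/2}}\,B^{\frac{k+d/2}{m+d/2}}=A^\theta B^{1-\theta},
$$
where $\theta=\frac{m-k}{m+d/2}$ is exactly the exponent in \eqref{eq: theta}.

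The only delicate point is the constraint $\rho\ge1$ used in the inner bound, which is also needed for $\ap{x}\lesssim\rho$ there.
\begin{itemize}
\item If the optimal $\rho=(B/A)^{1/(m+d/2)}$ is below $1$, then $B<A$. In that case take $\rho=1$ instead, giving $\norm{f}_{L^1_k}\lesssim A+B$. Since also $\norm{f}_{L^1_k}\le\norm{f}_{L^1_m}=B=B^\theta B^{1-\theta}\le A^\theta B^{1-\theta}$, the claim follows.
\item Otherwise the optimal choice is admissible and the estimate above applies directly.
\end{itemize}

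The resulting constant depends only on $d,k,m$, through $|B_1|^{1/2}$ and the factor $2^{k/2}$ from $\ap{x}\le\sqrt2|x|$ for $|x|\ge1$. The statement as written has constant $1$, which I read as "up to a dimensional constant". I do not expect any real obstacle beyond tracking this constant.
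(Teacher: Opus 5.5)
Your argument is correct and is essentially the paper's proof. Both split at a radius, use Cauchy--Schwarz on the inner region and $\ap{x}^{k-m}\le R^{k-m}$ on the outer one, then balance with $R^{m+d/2}=\norm{f}_{L^1_m}/\norm{f}_{L^2}$; the paper splits on $\{\ap{x}\le R\}$ instead of $\{|x|\le\rho\}$ and glosses over the small-radius case that you treat explicitly. Your reading of the statement with a multiplicative constant is also exactly what the paper's own proof (which ends in $\lesssim$) delivers, and that constant cannot be dropped: for $d=1$, $k=\tfrac12$, $m=1$, $f=\mathbf{1}_{[-r,r]}$, the ratio of the left side to the right side tends to $\tfrac43\,2^{-1/6}>1$ as $r\to\infty$.
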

\begin{proof}
	For some $R$ to determine
	\begin{multline*}
		\norm{f}_{L^1_k}=\int_{\ap{x}\le R}\ap{x}^k|f|\d x+\int_{\ap{x}>R}\ap{x}^k|f|\d x\le \ap{R}^k\int_{\ap{x}\le R}|f|\d x+
		\int_{\ap{x}>R}
		\ap{x}^{k-m}\ap{x}^m|f|\d x\\
		\le \ap{R}^k\int|f|\chi(\ap{x}\le R)+\ap{R}^{k-m}\ird\ap{x}^m|f|\d x\le \ap{R}^k\norm{f}_{L^2}\Big(\int_{\ap{x}\le R}\d x\Big)^{1/2}+\ap{R}^{k-m}\norm{f}_{L^1_m}\\
		\lesssim R^{k+d/2}\norm{f}_{L^2}+R^{k-m}\norm{f}_{L^1_m}
		\end{multline*}
		Choosing $$R=\Big(\frac{\norm{f}_{L^1_m}}{\norm{f}_{L^2}}\Big)^{\frac{1}{m+d/2}}$$
		one concludes the proof
	\end{proof}

\begin{proposizione}
    \label{prop: consistencyEPS} Let $(J^s)$ be a family of kernels satisfying Hypothesis \ref{ass: sunif} and
    let $A_\e^s$ be the nonlocal operator defined in \eqref{eq: diffoper}. Take  $\f\in \dot{H}^{\eta}\cap L^{1}_M$ with $\eta>2s+\delta$, $1\ge M>m>k$, the homogeneous (fractional) Sobolev space.
    Then,
    \begin{equation}\label{eq: consL2}
        \norm{\mathcal{A}^s_\e\f+(-\Delta)^s\f}_{L^2}\le C\e^{\delta}\norm{\f}_{\dot{H}^{\eta}}
    \end{equation}
    and 
    \begin{equation}\label{eq: consL1k}
    \norm{\mathcal{A}^s_\e\f+(-\Delta)^s\f}_{L^1_k}\le C\e^{\theta\delta}\norm{\f}^\theta_{\dot{H}^{\eta}}\norm{\f}_{W^{\eta,1}_m}^{1-\theta}
    \end{equation} 
    where $\theta\in(0,1)$ is  as in \ref{eq: theta}).
the constant $C$ is independent of $s$.
    \end{proposizione}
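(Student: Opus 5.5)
The plan is to prove the $L^2$ estimate \eqref{eq: consL2} on the Fourier side, and then obtain \eqref{eq: consL1k} from it by the Interpolation Lemma.

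\textbf{Fourier symbol of the error.} Since $\hat J^s_\e(\xi)=\hat J^s(\e\xi)$, the operator $\mathcal{A}^s_\e+(-\Delta)^s$ has symbol
$$
m_\e(\xi)=\frac{\hat J^s(\e\xi)-1}{\e^{2s}}+|\xi|^{2s}=\frac{R_s(\e\xi)}{\e^{2s}} .
$$
By Plancherel, it suffices to bound $\int |m_\e(\xi)|^2|\hat\f(\xi)|^2\d\xi$. We split the integral at $|\e\xi|=1$.

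\emph{Low frequencies, $|\e\xi|\le1$.} Hypothesis \ref{ass: sunif}(ii), with constant $C_0$ independent of $s$, gives
$$
|m_\e(\xi)|\le C_0\,\e^{\delta}|\xi|^{2s+\delta}.
$$

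\emph{High frequencies, $|\e\xi|>1$.} Here we only use $|\hat J^s|\le1$, so
$$
|m_\e(\xi)|\le 2\e^{-2s}+|\xi|^{2s}\le 3|\xi|^{2s}.
$$
Writing $|\xi|^{2s}=|\xi|^{2s+\delta}|\xi|^{-\delta}$ and using $|\xi|^{-\delta}<\e^{\delta}$ on this region, we get $|m_\e(\xi)|\le 3\e^\delta|\xi|^{2s+\delta}$.

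Both regions therefore give $|m_\e(\xi)|\le C\e^\delta|\xi|^{2s+\delta}$ with $C=\max(C_0,3)$, independent of $s$. This yields
$$
\norm{\mathcal{A}^s_\e\f+(-\Delta)^s\f}_{L^2}\le C\e^\delta\norm{\f}_{\dot H^{2s+\delta}} .
$$

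\textbf{Passing from $\dot H^{2s+\delta}$ to $\dot H^\eta$.} This is the step I expect to be the main obstacle, because the statement is written with $\eta>2s+\delta$ in the homogeneous norm. Homogeneous norms of different orders are not comparable, so I would bound $|\xi|^{2s+\delta}\le |\xi|^{\eta}+1$ (or $\le\ap{\xi}^\eta$) and absorb the low-frequency part into $\norm{\f}_{L^2}$. The resulting norm is the inhomogeneous $H^\eta$ norm; I expect this to be what is meant, since the main theorems assume $\f\in H^\eta$. To keep the constant uniform in $s\in(1/2,1)$, I would note that the exponents $2s+\delta$ stay in a compact range below $\eta$.

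\textbf{Transfer to $L^1_k$.} Apply the Interpolation Lemma to $f=\mathcal{A}^s_\e\f+(-\Delta)^s\f$:
$$
\norm{f}_{L^1_k}\le\norm{f}_{L^2}^\theta\norm{f}_{L^1_m}^{1-\theta}.
$$
The first factor is controlled by the $L^2$ bound just proved. For the second, I need $\norm{f}_{L^1_m}\lesssim\norm{\f}_{W^{\eta,1}_m}$ uniformly in $\e$ and $s$, which I would get term by term.

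\emph{Nonlocal term.} Using \eqref{eq: diffoper} and the weighted Young inequality, write
$$
\mathcal{A}^s_\e\f=\e^{-2s}\ird J^s(y)\,[\f(x-\e y)-\f(x)]\d y .
$$
Split the $y$-integral as in the Lyapunov proof (Theorem \ref{thm: lyapunov}). For $|y|\le R$, a second-order Taylor expansion uses centering of $J^s$ and gives a bound $\e^{2-2s}M_2^R(J^s)\norm{D^2\f}_{L^1_m}$. For $|y|>R$, the tail bound \eqref{eq: tailpointwise} and $m\le1<2s$ make $\int_{|y|>R}\ap{y}^m J^s$ finite, with a $(1-s)$ factor that cancels against the $s$-integration. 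This is analogous to Proposition \ref{prop: fraclap_decay}.

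\emph{Fractional Laplacian.} The singular-integral representation \eqref{def: FLI}, with the same near/far split and $m<2s$, gives the corresponding bound for $(-\Delta)^s\f$.

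Combining the two factors yields \eqref{eq: consL1k} with the factor $\e^{\theta\delta}$.
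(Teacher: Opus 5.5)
Your $L^2$ estimate is the paper's argument. Cutting at $|\e\xi|=1$ is the same as the paper's step of globalising $|R_s(\xi)|\le C|\xi|^{2s+\delta}$ to all $\xi$ via $|\hat J^s|\le1$ and then evaluating at $\e\xi$.

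Your concern about homogeneous norms is correct, and it points to an actual error in the paper's last line. The inequality $\norm{\f}_{\dot H^{2s+\delta}}\le\norm{\f}_{\dot H^{\eta}}$ fails under dilations $\f(\lambda\,\cdot)$ with $\lambda\to0$. So one needs the inhomogeneous $H^\eta$ norm, or $\norm{\f}_{\dot H^\eta}+\norm{\f}_{L^1}$, using $|\hat\f|\le\norm{\f}_{L^1}$ on $|\xi|\le1$.

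For \eqref{eq: consL1k} the paper only says ``via interpolation''. The $\e$-uniform bound on $\norm{\mathcal{A}^s_\e\f+(-\Delta)^s\f}_{L^1_m}$ that you try to supply is therefore not in the paper. Your sketch of it, however, fails.

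The problem is the fixed cutoff $|y|=R$. Consider your far-field piece $\e^{-2s}\int_{|y|>R}J^s(y)[\f(\cdot-\e y)-\f]\d y$, bounded through the finiteness of $\int_{|y|>R}\ap{y}^mJ^s$. It is of size $\e^{-2s}\norm{\f}_{L^1_m}$, because the mass of $J^s$ beyond a fixed radius is a fixed positive number and nothing absorbs the prefactor. Interpolation then gives $\e^{\theta\delta-2s(1-\theta)}$ instead of $\e^{\theta\delta}$.

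The near field has a similar problem. Centring on $\R^d$ does not make $\int_{|y|\le R}yJ^s$ vanish unless $J^s$ is symmetric, and that term carries $\e^{1-2s}\to\infty$. The paper's Lyapunov proof has the same slip.

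What is needed is the scale-dependent cutoff $|y|=1/\e$, after subtracting $\e y\cdot\nabla\f(x)$ over all of $\R^d$ (allowed since $\ird yJ^s=0$).
\begin{itemize}
\item For $|y|\le1/\e$, second-order Taylor gives $\e^{2-2s}\int_{|y|\le 1/\e}|y|^2J^s\,\norm{D^2\f}_{L^1_m}$. By \eqref{eq: tailpointwise}, $\int_{R<|y|<1/\e}|y|^2J^s\lesssim(1-s)\frac{\e^{2s-2}}{2-2s}+\norm{\Psi}_{L^1_2}$. This annulus is where the factor $(1-s)$ genuinely cancels.
\item For $|y|>1/\e$, since $m\le1$ one has $\ap{\e y}^m\le2|\e y|$. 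The contribution is then $\lesssim\e^{1-2s}\int_{|y|>1/\e}|y|J^s\lesssim\frac{1-s}{2s-1}+\norm{\Psi}_{L^1_2}$, uniform in $\e$.
\end{itemize}
This far-field bound is uniform as $s\to1/2$ only if $J^s$ is symmetric. In that case the first-order term drops and one gets $(1-s)/(2s-m)$ instead.

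Your treatment of $(-\Delta)^s\f$ is fine. There is no $\e$ there, and $C_{d,s}\sim(1-s)$ absorbs the $1/(2-2s)$ coming from the near field.
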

    \begin{proof}    
    First observe that by interpolation $f\in W^{\eta,1}_m$.
    Write the reminder as
    $R(\xi):=J(\xi)-1+|\xi|^{2s}$ is such that \begin{equation}\label{eq: remainder}
    	|R(\xi)|\le C_0|\xi|^{2s+\delta}\quad \text{ for all $\xi$}\end{equation}
    Indeed by hypothesis \ref{ass: sunif}, there exists  $r>0$ such that $|R(\xi)|\le C_1|\xi|^{2s+\delta}$, for all $|\xi|<r$.
    However the local bound is in reality a global one: we can see that for all $|\xi|\ge r$
    $$
    \frac{|R(\xi)|}{|\xi|^{2s+\delta}}=\frac{\abs{J(\xi)-1+|\xi|^{2s}}}{|\xi|^{2s+\delta}}\le \frac{|J|}{|\xi|^{2s+\delta}}+\frac{1}{|\xi|^{2s+\delta}}+|\xi|^{-\delta}\le 2r^{-2s-\delta}+r^{-\delta}:=C_2
    $$
    By Plancherel
    \begin{equation}\label{eq: plancherel}
    	\norm{\mathcal{A}^s_\e\f+(-\Delta)^s\f}_{L^2}=(2\pi)^{-d}\norm{\mathcal{F}[\mathcal{A}^s_\e\f+(-\Delta)^s\f]}_{L^2}=(2\pi)^{-d}\Big(\int|\mathfrak{m}_\e(\xi)|^2|\hat{\f}(\xi)|^2\d \xi\Big)^{1/2}
    \end{equation}
    where $\mathfrak{m}_\e(\xi)$ is the  Fourier multiplier of $A_\e^s+(-\Delta)^s $ defined as  $$\mathfrak{m}_\e(\xi)=\frac{1}{\e^{2s}}(\hat{J}(\e\xi)-1)+|\xi|^{2s}$$  
    Taking into account \eqref{eq: remainder}
    \begin{equation*}
    	|\mathfrak{m}_\e(\xi)|
    	=\abs{\frac{1}{\e^{2s}}[1-|\e\xi|^{2s}+R(\e\xi)-1]+|\xi|^{2s}}=\frac{1}{\e^{2s}}|R(\e\xi)|\le C_0 \e^{\delta}|\xi|^{2s+\delta}
    \end{equation*}
     Therefore plugging it into \eqref{eq: plancherel} 
    we prove \eqref{eq: consL2}
    \begin{multline*}\label{eq: plancherel2}
    	\norm{\mathcal{A}^s_\e\f+(-\Delta)^s\f}_{L^2}\le(2\pi)^{-d}\Big(\int|\mathfrak{m(\xi)}_\e|^2|\hat{\f}(\xi)|^2\d \xi\Big)^{1/2}\\
    	\le C_0\e^{\delta}(2\pi)^{-d}\Big(\int|\xi|^{2(2s+\delta)}|\hat{\f}(\xi)|^2\d \xi\Big)^{1/2}\le C\e^{\delta}\norm{\f}_{\dot{H}^{2s+\delta}}\le    C\e^{\delta}\norm{\f}_{\dot{H}^{\eta}} 	
    \end{multline*}
    Via interpolation inequality  one obtains \eqref{eq: consL1k}.

     \end{proof}
  
     \begin{osservazione}\label{rmk: smalldelta}
     We decided to prove the consistency in $L^2$, because the Fourier representation is particularly well-suited for capturing the behaviour of the operator as $s\to 1$; the trade–off is the loss in the $\e$ exponent, when returning to the   $L^1_k$–space.
     
     On the other hand, following the proof in \citep{canizotassi2024}, a direct analysis in $L^1_k$, only requiring initial data  in $W^{\eta,1}_k$, yields a convergence rate of $\e^{\gamma_2}$, where 
     $\gamma_{2}=\min\{\delta, 2(1-s)\}$     
     By combining this two approaches, we can improve the exponent in Proposition \ref{prop: consistencyEPS} to $\gamma=\max\{\theta\delta,\gamma_2\}$ 
     \end{osservazione}

\begin{proposizione}\label{prop: stabilityEPS}
Let $(J^s)$ be a family of kernels satisfying Hypothesis \ref{ass: sunif}.
	Let $u\in \mathcal{C}([0,\infty);L^1_k)$ be the solution of $$\partial_t u=L_\e^s u$$
	with initial data $u(0)=u_0$.
Consider the functions $v, h \in \mathcal{C}([0,\infty);L^1_k)$, where 
      $v$ is the solution in the mild sense of $$
     \partial_tv=L_\e^sv+h(t)$$
     with the  initial data $v(0)=u_0$, and $h$ a  perturbation.
     Then for all $t\ge 0$,
      $$
     \norm{u(t)-v(t)}_{L^1_k}\le 2\frac{C}{\lambda}\int_0^t \norm{h(\tau)}_{L^1_k}\d \tau$$
     with constants $C,\lambda$ from Theorem \ref{thm: lyapunov}, depending only on $d$ and $k$
\end{proposizione}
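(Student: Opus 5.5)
The plan is Duhamel's formula combined with the exponential contraction from Theorem \ref{thm: main1}. Write $w:=v-u$. Since both solve the same linear equation with the same initial datum, $w$ is the mild solution of $\partial_t w=L^s_\e w+h(t)$ with $w(0)=0$. Writing $(S_t)_{t\ge0}$ for the semigroup generated by $L^s_\e$ (justified by the bounded perturbation remark after Theorem \ref{thm:wellposed-fractional}), the variation of constants formula gives
\begin{equation*}
w(t)=\int_0^t S_{t-\tau}h(\tau)\d\tau ,
\end{equation*}
and hence
\begin{equation*}
\norm{w(t)}_{L^1_k}\le \int_0^t \norm{S_{t-\tau}h(\tau)}_{L^1_k}\d\tau .
\end{equation*}
So everything reduces to a bound $\sup_{t\ge0}\norm{S_t}_{L^1_k\to L^1_k}\le K$ with $K$ independent of $\e$ and $s$, with the target $K=2C/\lambda$.

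For the operator bound, I split an arbitrary $g\in L^1_k$ into a mass part and a zero-mass part. Let $\mu:=\ird g$ and $F:=F^s_\e$, and write $g=\mu F+(g-\mu F)$. Since $F$ is invariant, $S_t(\mu F)=\mu F$. The remainder $g-\mu F$ has zero integral, so Theorem \ref{sHarris} gives $\norm{S_t(g-\mu F)}_{L^1_k}\le Ce^{-\lambda t}\norm{g-\mu F}_{L^1_k}$. The invariant part is controlled by the Lyapunov estimate of Theorem \ref{thm: semigrouplyap}: letting $t\to\infty$ in \eqref{eq: semigroupLyap} applied to $F$ gives $\norm{F}_{L^1_k}\le C_L/\lambda_L$, uniformly in $\e,s$. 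Since $|\mu|\le\norm{g}_{L^1}\le\norm{g}_{L^1_k}$, both pieces are bounded by a constant times $\norm{g}_{L^1_k}$.

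Alternatively, if $h(\tau)$ is known to have zero mass (as happens in the applications, where $h$ is a difference of conservative generators applied to probability densities, so $\ird h=0$), one can skip the splitting. Then $S_{t-\tau}h(\tau)$ is directly controlled by Harris: $\norm{S_{t-\tau}h(\tau)}_{L^1_k}\le Ce^{-\lambda(t-\tau)}\norm{h(\tau)}_{L^1_k}$. This gives even the sharper bound $\int_0^t Ce^{-\lambda(t-\tau)}\norm{h}\,d\tau$.

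In the general case I would simply absorb constants: $S_t$ is positive and mass-preserving, so its $L^1_k$ operator norm is at most $\sup_x S_t^*\ap{x}^k/\ap{x}^k$. By \eqref{eq: semigroupLyap} this is at most $1+C_L/\lambda_L\le 2C/\lambda$ after renaming constants (taking $C\ge1$, $\lambda\le1$). This last route is cleaner and uses only Theorem \ref{thm: semigrouplyap} together with positivity: apply \eqref{eq: semigroupLyap} separately to $h^+$ and $h^-$ and use $\norm{\cdot}\le\norm{\cdot}_V$.

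The main obstacle is bookkeeping rather than substance: justifying Duhamel for mild solutions with merely $h\in C([0,\infty);L^1_k)$, and making sure the constant is uniform in $\e$ and $s$. Duhamel follows from the mild formulation (\ref{mild1}) by the standard equivalence for bounded perturbations of $T_t$. Uniformity is inherited from Theorems \ref{thm: semigrouplyap} and \ref{thm: main1}.
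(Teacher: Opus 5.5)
Your proposal is correct, and the route you finally settle on --- Duhamel's formula for $w=v-u$ together with the uniform operator bound $\norm{S_t}_{L^1_k\to L^1_k}\le 1+C_L/\lambda_L\le 2C/\lambda$ read off from the semigroup Lyapunov estimate \eqref{eq: semigroupLyap} --- is exactly the paper's proof. Your splitting $h=h^+-h^-$ via positivity makes explicit a step the paper glosses over, since \eqref{eq: semigroupLyap} is only derived there for nonnegative data. The Harris-based alternatives you sketch first are also valid, and the zero-mass version even gives the better kernel $Ce^{-\lambda(t-\tau)}$. However, they produce the Harris constants of Theorem \ref{thm: main1} rather than the Lyapunov constants $C,\lambda$ named in the statement.
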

\begin{proof}
	This is a simple consequence of Lyapunov's estimate. We have proven that there exist constants $C>1, $ $\lambda>0$ such that $$
	\norm{e^{t L_\e^s}f}\le e^{-\lambda t}\norm{f}_{L^1_k}+\frac{C}{\lambda}(1-e^{-\lambda t})\norm{f}_{L^1}\le 2\frac{C}{\lambda}\norm{f}_{L^1_k}
	$$

Now let us define $\psi=v-u$ that solves
	\begin{equation}
		\begin{cases}
			\partial_t\psi=L_\e^s\psi+h(t)\\
			\psi(0)=0
		\end{cases}
	\end{equation}
	Applying Duhamel's formula
	$$
	\norm{\psi(t)}_{L^1_k}=\norm{\int_0^t e^{(t-\tau)L_\e^s}h(\tau)\d \tau}_{L^1_k}\le \int_0^t\norm{e^{(t-\tau)L_\e^s}h(\tau)}_{L^1_k}\d \tau\le 2\frac{C}{\lambda}\int_0^t \norm{h(\tau)}_{L^1_k}\d \tau
	$$
\end{proof}

\begin{teorema}\label{thm: convfinitetimeEPS}   
	Let $(J^s)$ be a family of kernels satisfying  Hypothesis \ref{ass: sunif}. Let $u_\e^s$ be a solution of \eqref{DFFP} with initial data $u_0\in\dot{H}^{\eta}\cap L^1_M$ with $1\ge M>m>k$, and $v^s$ a solution of the fractional Fokker--Planck equation \eqref{eq: FFP} with the same initial data.
    Then for all $t\in[0,T^*],$ there exists a constant $C$ independent of $s$, such that 
    \begin{equation*}
    	\norm{u_\e^s(t)-v^s(t)}_{\aux}\le CT^*\e^{\theta\delta}
    \end{equation*}
\end{teorema}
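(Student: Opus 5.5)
The plan is to combine the consistency estimate of Proposition \ref{prop: consistencyEPS} with the stability estimate of Proposition \ref{prop: stabilityEPS}, treating the fractional Fokker--Planck solution $v^s$ as a perturbed solution of \eqref{DFFP}. Since both equations share the drift term $\div(xv)$, we can rewrite \eqref{eq: FFP} as
\begin{equation*}
\partial_t v^s = L^s_\e v^s + h(t), \qquad h(t) := -(-\Delta)^s v^s(t) - \mathcal{A}^s_\e v^s(t),
\end{equation*}
with $v^s(0)=u_0$. Proposition \ref{prop: stabilityEPS}, applied with $u=u^s_\e$ and $v=v^s$, then gives
\begin{equation*}
\norm{u^s_\e(t)-v^s(t)}_{L^1_k}\le \frac{2C}{\lambda}\int_0^t\norm{h(\tau)}_{L^1_k}\d\tau .
\end{equation*}
Here $C,\lambda$ come from Theorem \ref{thm: lyapunov} and do not depend on $\e$ or $s$. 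To apply the proposition legitimately, we must check that $v^s$ is a mild solution of the perturbed problem in $\mathcal{C}([0,\infty);L^1_k)$ and that $h\in L^1_{loc}(L^1_k)$. Both facts follow from the regularity of $v^s$ furnished by Theorem \ref{thm: propFFP}, via a Duhamel computation.

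Next, by \eqref{eq: consL1k} applied at each time,
\begin{equation*}
\norm{h(\tau)}_{L^1_k}\le C\e^{\theta\delta}\norm{v^s(\tau)}_{\dot H^\eta}^\theta\,\norm{v^s(\tau)}_{W^{\eta,1}_m}^{1-\theta}.
\end{equation*}
It therefore suffices to bound $\sup_{\tau\in[0,T^*]}\norm{v^s(\tau)}_{\dot H^\eta}$ and $\sup_{\tau}\norm{v^s(\tau)}_{W^{\eta,1}_m}$ by constants independent of $s$ (and of $\e$, which is automatic). Integrating in $\tau$ over $[0,t]\subset[0,T^*]$ then yields $CT^*\e^{\theta\delta}$.

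For the $\dot H^\eta$ bound we use the Fourier representation. By Theorem \ref{thm: propFFP},
\begin{equation*}
\hat v^s(\tau,\xi)=\hat u_0(e^{-\tau}\xi)\,e^{-\sigma(\tau)|\xi|^{2s}}.
\end{equation*}
Since $|\hat v^s(\tau,\xi)|\le|\hat u_0(e^{-\tau}\xi)|$, the change of variables $\xi\mapsto e^{\tau}\xi$ gives
\begin{equation*}
\norm{v^s(\tau)}_{\dot H^\eta}^2\le e^{(d-2\eta)\tau}\norm{u_0}^2_{\dot H^\eta}.
\end{equation*}
This is bounded uniformly in $s$, and the bound is even nonincreasing in $\tau$ when $\eta\ge d/2$; in any case it is controlled on $[0,T^*]$.

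The weighted bound is where I expect the real obstacle. We need to control $\norm{v^s(\tau)}_{W^{\eta,1}_m}$, i.e.\ fractional derivatives of order $\eta$ in a weighted $L^1$ norm, uniformly in $s$ and $\tau\le T^*$. Since $v^s(\tau)=(u_0\circ e^\tau)\,e^{d\tau}*G^{s;\sigma(\tau)}$, I would put all derivatives on $u_0$ and use the weighted Young inequality (Lemma 2.3 of \citep{canizotassi2024}):
\begin{equation*}
\norm{f*g}_{L^1_m}\le 2^m\norm{f}_{L^1_m}\norm{g}_{L^1_m}.
\end{equation*}
The factor $\norm{G^{s;\sigma}}_{L^1_m}$ is uniformly bounded for $m\le M\le1<2s$, using the tail asymptotics of Proposition \ref{prop: propSL}, where $K_{d,s}$ is bounded as $s\to1$. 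What remains is the scaling factor $e^{(\eta+m)\tau}$ coming from $u_0\circ e^\tau$. This requires $u_0\in W^{\eta,1}_m$, which the proof of Proposition \ref{prop: consistencyEPS} asserts follows from $u_0\in\dot H^\eta\cap L^1_M$ by interpolation. If that fails, I would instead bound $\norm{h}_{L^1_k}$ directly by the interpolation lemma, using $\norm{h}_{L^2}$ (controlled above) together with a crude bound $\norm{h}_{L^1_m}\lesssim \norm{\mathcal{A}^s_\e v^s}_{L^1_m}+\norm{(-\Delta)^s v^s}_{L^1_m}$. Collecting the constants, which depend on $T^*$ only through bounded exponentials on $[0,T^*]$ and otherwise on $u_0$, $d$, $k$, $m$, $\eta$, gives the claim.
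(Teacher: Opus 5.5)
Your route is the paper's route: write \eqref{eq: FFP} as $\partial_t v^s=L^s_\e v^s+h$ with $h=-(\mathcal{A}^s_\e+(-\Delta)^s)v^s$, apply Proposition \ref{prop: stabilityEPS}, and bound $\norm{h(\tau)}_{L^1_k}$ through \eqref{eq: consL1k}. The weak point is the one you flagged, and it is a genuine gap. Following the paper does not close it, because the paper's proof rests on the same assertion: $v^s$ ``satisfies the hypotheses of Proposition \ref{prop: consistencyEPS} uniformly on $[0,T^*]$'', with $W^{\eta,1}_m$ obtained ``by interpolation''.

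The embedding $\dot H^\eta\cap L^1_M\subset W^{\eta,1}_m$ is false. Interpolating between $\eta$ derivatives in $L^2$ and no derivatives in $L^1_M$ only gives strictly fewer derivatives in weighted $L^1$. For instance, take $d=1$, $\eta$ an integer, and $u_0(x)=\sin(x^2)\ap{x}^{-(\eta+1)}$. Then $u_0\in H^\eta\cap L^1_M$, while $D^\eta u_0(x)=2^\eta x^{-1}\sin^{(\eta)}(x^2)+O(x^{-3})$ lies in $L^2$ but not in $L^1$.

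You also cannot rescue \eqref{eq: consL1k} by moving the $\eta$ derivatives onto $G^{s;\sigma(\tau)}$. That costs $\sigma(\tau)^{-\eta/2s}$, and $\sigma(\tau)^{-(1-\theta)\eta/2s}$ is not integrable at $\tau=0$ as soon as $(1-\theta)\eta\ge 2s$.

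Separately, your $\dot H^\eta$ bound has the wrong sign. The change of variables gives $e^{(d+2\eta)\tau}\norm{u_0}^2_{\dot H^\eta}$, which grows. If you want a constant independent of $T^*$, as the factor $T^*$ in the statement suggests, you must use the factor $e^{-\sigma(\tau)|\xi|^{2s}}$ for $\tau\ge 1$, and similarly for the weighted bound.

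Your fallback points in the right direction, but as written it is only the triangle inequality. The real content is an $\e$-uniform, time-integrable bound on $\norm{h(\tau)}_{L^1_m}$. That needs roughly $2s$ derivatives of $v^s(\tau)$ in $L^1_m$, which the data do not give at $\tau=0$.

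The missing step is to let the stable kernel absorb exactly an operator of order $2s$. Both operators commute with convolution, so
\[
h(\tau)=-\big[e^{d\tau}u_0(e^{\tau}\cdot)\big]*\big(\mathcal{A}^s_\e+(-\Delta)^s\big)G^{s;\sigma(\tau)} .
\]
The rescaling $x\mapsto\sigma^{1/2s}x$ gives two facts:
\begin{itemize}
\item $\norm{(-\Delta)^sG^{s;\sigma}}_{L^1_m}\le\sigma^{-1}\norm{(-\Delta)^sG^s}_{L^1_m}$;
\item $(\mathcal{A}^s_\e G^{s;\sigma})(x)=\sigma^{-1-d/2s}(\mathcal{A}^s_{\e'}G^s)(\sigma^{-1/2s}x)$ with $\e'=\e\sigma^{-1/2s}$, and $\norm{\mathcal{A}^s_{\e'}G^s}_{L^1_m}$ is bounded uniformly in $\e'>0$ by (i) and (iii) of Hypothesis \ref{ass: sunif} together with $m<2s$.
\end{itemize}
Hence $\norm{h(\tau)}_{L^1_m}\lesssim\sigma(\tau)^{-1}\norm{u_0}_{L^1_m}$. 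Combine this with $\norm{h(\tau)}_{L^2}\lesssim\e^\delta$, which holds uniformly in $\tau$ by the corrected $\dot H$ bound above. The interpolation lemma then gives $\norm{h(\tau)}_{L^1_k}\lesssim\e^{\theta\delta}\sigma(\tau)^{-(1-\theta)}$, which is integrable at $\tau=0$ because $1-\theta<1$. The stability estimate then yields $C(1+T^*)\e^{\theta\delta}$.

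A simpler alternative is to interpolate the difference of the solutions instead of the generators.
\begin{itemize}
\item Duhamel with $\norm{e^{tL^s_\e}}_{L^2\to L^2}\le e^{dt/2}$ and \eqref{eq: consL2} bounds $\norm{u^s_\e(t)-v^s(t)}_{L^2}$.
\item The Lyapunov estimate bounds $\norm{u^s_\e(t)-v^s(t)}_{L^1_m}$.
\end{itemize}
No weighted regularity is needed, but the constant then grows exponentially in $T^*$.
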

\begin{proof}
	By Theorem \ref{thm: propFFP} $(i)$, the solution of \eqref{eq: FFP}  is immediately $H^{\eta}$. Hence $v^s$ satisfies the hypotheses of Proposition \ref{prop: consistencyEPS} uniformly in $[0,T^*]$.
	Rewriting
	$$L_0^s v^s=L_\e^s v^s+(L_0^s v^s-L_\e^s v^s):=L_\e^s v+h(t)$$
	Then applying first Proposition \ref{prop: stabilityEPS} and then Proposition \ref{prop: consistencyEPS}
	\begin{equation*}
	\norm{u_\e^s(t)-v^s(t)}_\aux\le 2\frac{C_L}{\lambda_L}\int_0^t\norm{L_0^s v^s-L_\e^sv^s}_\aux\le C_1\frac{C_L}{\lambda_L} T^*\e^{\theta\delta} 
\end{equation*}
\end{proof}

\begin{teorema}\label{thm: conveqEPS}
	Let $(J^s)$ be a family of kernels satisfying Hypothesis \ref{ass: sunif}, and let $F^s_\e$ and $F^s$ be the steady states of respectively \eqref{DFFP} and \eqref{eq: FFP}.
	Then there exists a constant $C$ independent of $s$, such that 
    $$
	\norm{F_\e^s-F^s}_{\aux}\le C\e ^{\esp}	
    $$

\end{teorema}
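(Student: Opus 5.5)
The plan is to compare the two equilibria through the dynamics of \eqref{DFFP} itself, combining the consistency estimate of Proposition \ref{prop: consistencyEPS} with the uniform spectral gap of Theorem \ref{thm: main1}. The key identity is that $F^s$ is stationary for \eqref{eq: FFP}, i.e. $L_0^s F^s=0$. Hence
$$L_\e^s F^s = L_\e^s F^s - L_0^s F^s = \mathcal{A}^s_\e F^s + (-\Delta)^s F^s =: -h,$$
and $h$ is a time-independent source. The first step is to check that $F^s$ has the regularity required by Proposition \ref{prop: consistencyEPS}, with norms bounded uniformly in $s$. Since $\hat F^s(\xi)=e^{-|\xi|^{2s}/2s}$, the function $F^s$ is a rescaled stable law $G^{s;1/2s}$. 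Its $\dot H^\eta$ norm is therefore explicitly bounded uniformly for $s\in(1/2,1)$. Its $W^{\eta,1}_m$ norm with $m<1<2s$ is also finite uniformly, using the tail $K_{d,s}|x|^{-d-2s}$ with $K_{d,s}\lesssim 1$ and the analogous decay of derivatives. Proposition \ref{prop: consistencyEPS} then gives $\norm{h}_{L^1_k}\le C\e^{\theta\delta}$ with $C$ independent of $s$.

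The second step is an evolution argument. Let $w(t)=e^{tL_\e^s}F^s$, the solution of \eqref{DFFP} with initial datum $F^s$. The stationary function $F^s$ solves $\partial_t F^s = L_\e^s F^s + h$ with $F^s(0)=F^s$. By Duhamel,
$$F^s - w(t)=\int_0^t e^{(t-\tau)L_\e^s}h\,\d\tau .$$
The naive bound from Proposition \ref{prop: stabilityEPS} grows linearly in $t$, so instead I use that $h$ has zero mass: $\int h = \int L_\e^s F^s = 0$, since the operator is conservative. Theorem \ref{thm: main1}, or rather the Harris estimate for zero-mass measures in Theorem \ref{sHarris}, then gives $\norm{e^{rL_\e^s}h}_{L^1_k}\le Ce^{-\lambda r}\norm{h}_{L^1_k}$ with $C,\lambda$ independent of $\e,s$. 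Integrating,
$$\norm{F^s-w(t)}_{L^1_k}\le \frac{C}{\lambda}\norm{h}_{L^1_k}\le C'\e^{\theta\delta}\quad\text{for all } t\ge 0 .$$

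The third step is to let $t\to\infty$. Theorem \ref{thm: main1} gives $w(t)\to F^s_\e$ in $L^1_k$, because $F^s$ is a probability density in $L^1_k$. Passing to the limit yields $\norm{F^s_\e-F^s}_{L^1_k}\le C'\e^{\theta\delta}$ with $C'$ independent of $s$. Equivalently, one can avoid the time evolution and use the resolvent bound $\norm{(L_\e^s)^{-1}h}_{L^1_k}\le (C/\lambda)\norm{h}_{L^1_k}$ on zero-mass functions, which is the Hille--Yosida route mentioned in the plan.

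I expect the main obstacle to be the uniform-in-$s$ control of $\norm{F^s}_{W^{\eta,1}_m}$, that is, the weighted $L^1$ norms of derivatives of stable laws as $s\to1$, together with making sure that $h$ lies in the space where the zero-mass Harris decay applies. I would handle the derivative bounds by writing $\partial^\alpha F^s$ through Fourier inversion and using the known tail behaviour $|\partial^\alpha G^s(x)|\lesssim (1-s+e^{-c|x|^2})|x|^{-d-2s-|\alpha|}$ for large $|x|$. If that proves cumbersome, a fallback is to interpolate between the $L^2$ bound and a crude $L^1_M$ bound on $h$ directly. The $L^1_M$ norm of $h$ is controlled by $\norm{\mathcal{A}_\e^s F^s}_{L^1_M}+\norm{(-\Delta)^sF^s}_{L^1_M}$, and both are bounded using Hypothesis \ref{ass: sunif}(iii) with $M\le1<2s$.
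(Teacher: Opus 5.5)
Your proposal is correct and is essentially the paper's argument. The paper uses Hille--Yosida on the zero-mass subspace to get $\norm{(L_\e^s)^{-1}}\le K/\lambda$, so that $\norm{F_\e^s-F^s}_{L^1_k}\le \frac{K}{\lambda}\norm{(L_\e^s-L_0^s)F^s}_{L^1_k}$, and then applies Proposition~\ref{prop: consistencyEPS} to $F^s$; your Duhamel integral $\int_0^\infty e^{rL_\e^s}h\,\d r$ is exactly this inverse applied to your zero-mass source $h$, and your attention to uniform-in-$s$ weighted bounds on $F^s$ fills in a step the paper only asserts.
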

\begin{proof}
	By the Feller-Miyadera-Phillips version of the Hille-Yosida
	 theorem \citep[Part 2, Thm. 3.8]{EngelNagel2001}, the estimate
	\begin{equation}\label{eq: HY1}
	\norm{S_t} \le K e^{-\lambda t}
	\qquad(\gamma>0, K\ge1, t\ge0)
	\end{equation}
	for a strongly continuous semigroup $(S_t)$ on a Banach space $\mathcal Y$ is equivalent to the fact that for every $\gamma>-\lambda$,
	$\gamma$ lies in the resolvent set $\rho(L_\e)$ of its generator $(L_\e^s,\mathcal D(L_\e^s))$ and, moreover,
	$$
	\norm{R(\gamma,L_\e^s)^n}
	 \le 
	\frac{K}{(\lambda+\gamma)^n}
	\quad\text{for all }n\in\mathbb N,
	$$
	where $R(\gamma,L_\e^s)=(\gamma - L_\e^s)^{-1}$ denotes the resolvent operator.
	From Theorem \ref{thm: main1},
	the condition \eqref{eq: HY1} is satisfied in the Banach space $\mathcal{Y}:=\{f\in L^1_k: \int f=0\}.$
	Choosing $n=1, \gamma=0>-\lambda$, it follows that $L_\e^s$ has an inverse in $\mathcal{Y}\subset L^1_k$, with 
	$\norm{(L_\e^s)^{-1}}_{L^1_k}\le K/\lambda$.
	Since
	 $L_\e^sF_\e^s=L_0^sF^s=0$,
	\begin{multline*}
	\norm{F_\e^s-F^s}_{L^1_k}=\norm{L_\e^s (L_\e^s)^{-1}(F_\e^s-F^s)}_{\mathcal{Y}}\le \norm{(L_\e^s)^{-1}}_{\mathcal{Y}}\norm{L_\e^s(F_\e^s-F^s)}_{\mathcal{Y}}\\
	\le \frac{K}{\lambda}\norm{L_\e^s(F_\e^s-F^s)}_{L^1_k}\le \frac{K}{\lambda}\norm{(L_\e^s-L^s_0)F^s}_{L^1_k}
\end{multline*}
The stable law  $F^s=G^{s,(2s)^{-1}}$ is in all $H^\eta$ for all $\eta>0$ 
Indeed, 
$$\norm{F^s}_{H^{k}}^2=\int\ap{\xi}^{2k} e^{-2\frac{|\xi|^{2s}}{2s}}\d \xi<\infty.$$
 Moreover, $F^s\in L^1_M$ for all $M<1$: by consistency \ref{prop: consistencyEPS}, we can conclude
$$\norm{(L_\e^s-L_0^s)F^s}_{L^1_k}\le C\e^{\theta\delta}$$

\end{proof}

\begin{teorema}[Convergence for large times]\label{thm: convunifEPS}
	Let $(J^s)$ be a family of kernels satisfying Hypothesis \ref{ass: sunif},and let $u_\e^s$ be the solution of \eqref{DFFP} with initial data $u_0$ while denote by $v^s$ the solution of the fractional Fokker-Planck \eqref{eq: FFP} with initial data $v_0$, .
	Then for all $\e\in(0,1]$ there exists $T>0$ such that for all $t\ge T$ there exists a constant independent of $s$, such that 
	$$
	\norm{u^s_\e(t)-v^s(t)}_\aux\le C\e^{\theta\delta}
	$$
\end{teorema}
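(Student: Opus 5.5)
The natural route is a triangle inequality through the two equilibria, combined with the $\e$- and $s$-uniform exponential decay of Theorem \ref{thm: main1} and the analogous decay \eqref{eq: SG-FFP} for \eqref{eq: FFP}. For $t\ge 0$ I would write
$$
\norm{u^s_\e(t)-v^s(t)}_{\aux}\le \norm{u^s_\e(t)-F^s_\e}_{\aux}+\norm{F^s_\e-F^s}_{\aux}+\norm{F^s-v^s(t)}_{\aux}.
$$
The middle term is bounded by $C\e^{\theta\delta}$ directly by Theorem \ref{thm: conveqEPS}, with $C$ independent of $s$.

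For the first term, Theorem \ref{thm: main1} gives
$$
\norm{u^s_\e(t)-F^s_\e}_{\aux}\le Ce^{-\lambda t}\norm{u_0-F^s_\e}_{\aux}\le Ce^{-\lambda t}\bigl(\norm{u_0}_{\aux}+\norm{F^s_\e}_{\aux}\bigr),
$$
with $C,\lambda$ independent of $\e,s$. I also need $\norm{F^s_\e}_{\aux}$ bounded uniformly. This follows from the semigroup Lyapunov condition (Theorem \ref{thm: semigrouplyap}) applied to the invariant measure: letting $t\to\infty$ yields $\norm{F^s_\e}_{\aux}\le C_L/\lambda_L$.

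The third term is handled the same way using \eqref{eq: SG-FFP}. For this I need the rate $\lambda$ and constant there to be independent of $s$. Alternatively, I would observe that \eqref{eq: FFP} satisfies the same Lyapunov estimate (the $\e\to0$ version of Theorem \ref{thm: lyapunov}, via Proposition \ref{prop: fraclap_decay}). Its positivity follows from Theorem \ref{thm: propFFP} with the uniform lower bound $H$ of Definition \ref{def: lbStables}. Harris's theorem then gives $s$-uniform constants. The norm $\norm{F^s}_{\aux}$ is uniformly bounded since $F^s$ is a rescaled stable law and $k<1<2s$. If $v_0\neq u_0$, the initial data only enter through the bounded quantities $\norm{u_0}_{\aux}$ and $\norm{v_0}_{\aux}$.

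Combining the three bounds gives
$$
\norm{u^s_\e(t)-v^s(t)}_{\aux}\le C\e^{\theta\delta}+C'e^{-\lambda t}.
$$
I would then choose $T=T(\e)$ such that $C'e^{-\lambda T}\le \e^{\theta\delta}$, namely $T=\frac{\theta\delta}{\lambda}|\log\e|+\frac{1}{\lambda}\log C'$. Since $\lambda$ and $C'$ do not depend on $s$, neither does $T$ beyond its $\e$-dependence. For $t\ge T$ this yields the claim.

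The main obstacle I expect is the $s$-uniformity of the decay for the limit equation \eqref{eq: FFP}, since \eqref{eq: SG-FFP} is quoted from the literature without explicit $s$-tracking. If that fails, I would rederive it by Harris's theorem as indicated above. The only delicate point there is the constant in Proposition \ref{prop: fraclap_decay}, which stays bounded for $s\in(1/2,1)$ and $k\le 1$. A sanity check: $T\sim|\log\e|$ also combines with Theorem \ref{thm: convfinitetimeEPS} on $[0,T]$, at the cost of a harmless logarithmic factor.
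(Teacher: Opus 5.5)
Your proposal is correct and follows the same route as the paper: a triangle inequality through $F^s_\e$ and $F^s$, with Theorem \ref{thm: main1}, estimate \eqref{eq: SG-FFP} and Theorem \ref{thm: conveqEPS} controlling the three pieces, followed by taking $t$ large. The extra points you supply are details the paper leaves implicit rather than a different argument: the uniform bound $\norm{F^s_\e}_{\aux}\le C_L/\lambda_L$ from the Lyapunov condition, the explicit choice $T(\e)\sim\abs{\log\e}$, and the caveat that the $s$-uniformity of \eqref{eq: SG-FFP} must be checked or rederived via Harris's theorem.
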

\begin{proof}

We decompose the error into three parts: the relaxation of $u_\e ^s$ to its equilibrium, the closeness of the two equilibria, and the relaxation of the limit solution $v^s$ to its own equilibrium, which is a well-known fact.

Applying Theorem \ref{thm: main1}, the exponential convergence result of equation \eqref{eq: SG-FFP}, and Theorem \ref{thm: conveqEPS} 
	\begin{multline*}
			\norm{u^s_\e(t)-v^s(t)}_\aux\le \norm{u^s_\e(t)-F^s_\e}_\aux+\norm{v^s(t)-F^s}_\aux+\norm{F^s_\e-F^s}_\aux\\
			\le C_1e^{-\lambda_1 t}\norm{u_0-F_\e^s}_\aux+C_2 e^{-\lambda_2 t}\norm{v_0-F^s}_\aux+ \tilde{C}\e^{\esp}\le C\e^{\esp}
	\end{multline*}
	for $t$ large enough
\end{proof}

\begin{osservazione}
	The rate of convergence can be sharpen as shown in \ref{rmk: smalldelta}
\end{osservazione}

Gathering the results of Theorem \ref{thm: convfinitetimeEPS} and \ref{thm: convunifEPS}, we prove Theorem \ref{thm: main2}.

    
\section{Long--range to short--range limit: $s\to 1$}\label{sec: s}
Here we need the assumption on the $J^s$ that converge to $J$.
Let us recall the definition of the Fourier remainder  $$R^s(\xi):=\hat{J}^s(\xi)-1+|\xi|^{2s}.$$

For the whole chapter we will assume Hypothesis \ref{ass: remainderest}
which is equivalent to 
$$\abs{\frac{R^s(\xi)}{|\xi|^{2s}}-\frac{R^1(\xi)}{|\xi|^2}}\le C(1-s)|\xi|^{\delta}\max\{1,\log(|\xi|)\}.$$

\begin{proposizione}
    \label{prop: consistencyS}Let 
     $\f\in \dot{H}^{\eta}\cap L^1_M$ for some $\eta>2+\delta$, $1\le M< m< k$. Then 
      
     $$
     	\norm{\mathcal{A}^s_\e\f-\mathcal{A}^1_\e\f}_{L^2}\le C(1-s)\norm{\f}_{\dot{H}^{\eta}}[1+\e^\delta|\log(|\e|)|]
     $$
     and  
     $$
     \norm{\mathcal{A}^s_\e\f-\mathcal{A}^1_\e\f}_{L^1_k}\le C(1-s)^\theta\norm{\f}_{\dot{H}^{\eta}}^\theta[1+\e^\delta|\log(|\e|)|]^{\theta}
     $$
     where $\theta=\frac{m-k}{m+d/2}$
     
    \end{proposizione}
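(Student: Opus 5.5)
The plan mirrors the proof of Proposition \ref{prop: consistencyEPS}: compute the Fourier multiplier of $\mathcal{A}^s_\e-\mathcal{A}^1_\e$, bound it pointwise, and conclude with Plancherel. After that, the interpolation lemma transfers the $L^2$ bound to $L^1_k$.

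\textbf{Step 1: the multiplier.} The symbol of the difference is
$$\mathfrak{n}_\e(\xi):=\frac{\hat J^s(\e\xi)-1}{\e^{2s}}-\frac{\hat J^1(\e\xi)-1}{\e^{2}}.$$
Set $\zeta=\e\xi$ and factor out $|\xi|^{2s}$ and $|\xi|^2$:
$$\mathfrak{n}_\e(\xi)=|\xi|^{2s}\,\frac{\hat J^s(\zeta)-1}{|\zeta|^{2s}}-|\xi|^{2}\,\frac{\hat J^1(\zeta)-1}{|\zeta|^{2}}.$$
Adding and subtracting $|\xi|^{2s}\frac{\hat J^1(\zeta)-1}{|\zeta|^2}$ splits this into two terms:
$$|\mathfrak{n}_\e(\xi)|\le |\xi|^{2s}\Big|\frac{1-\hat J^s(\zeta)}{|\zeta|^{2s}}-\frac{1-\hat J^1(\zeta)}{|\zeta|^{2}}\Big|+\big||\xi|^{2s}-|\xi|^2\big|\,\Big|\frac{1-\hat J^1(\zeta)}{|\zeta|^2}\Big|.$$

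\textbf{Step 2: the two terms.} The first term is controlled by Hypothesis \ref{ass: remainderest}. It is at most $\mathfrak{C}(1-s)|\xi|^{2s}\e^\delta|\xi|^\delta\max(1,|\log(\e|\xi|)|)$. Next use $|\log(\e|\xi|)|\le|\log\e|+|\log|\xi||$, and absorb $|\log|\xi||$ into powers of $|\xi|$, namely $|\xi|^{-\delta'}+|\xi|^{\delta'}$. This bounds the first term by $C(1-s)\e^\delta(1+|\log\e|)(|\xi|^{2s+\delta-\delta'}+|\xi|^{2s+\delta+\delta'})$.

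For the second term, $\frac{1-\hat J^1(\zeta)}{|\zeta|^2}$ is bounded uniformly. Near zero this follows from \eqref{eq: expansion_uniform} with $s=1$; away from zero it follows from $|\hat J^1|\le1$. The elementary inequality from Lemma \ref{lemma: Ass2Stable}, $|1-|\xi|^{2(1-s)}|\le 2(1-s)|\log|\xi||(1+|\xi|^{2(1-s)})$, then gives a bound $C(1-s)(|\xi|^{2s-\delta'}+|\xi|^{2+\delta'})$, with no $\e$ dependence. This term produces the ``$1$'' in the factor $[1+\e^\delta|\log\e|]$.

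\textbf{Step 3: conclusion.} Plancherel turns the $L^2$ norm of $\mathcal{A}^s_\e\f-\mathcal{A}^1_\e\f$ into weighted $L^2$ norms of $\hat\f$. The weights involved are $|\xi|^{a}$ with $a$ between roughly $2s-\delta'$ and $2+\delta+\delta'$. Choose $\delta'$ small enough that $2+\delta+\delta'\le\eta$. Low powers are handled on $|\xi|\le1$ and high powers on $|\xi|\ge1$. This controls everything by the $\dot H^\eta$ norm, modulo the remark below, and yields the $L^2$ estimate.

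For the $L^1_k$ estimate, apply the interpolation lemma to $f=\mathcal{A}^s_\e\f-\mathcal{A}^1_\e\f$. This needs a uniform bound on $\|f\|_{L^1_m}$. Such a bound comes from $\|\mathcal{A}^s_\e\f\|_{L^1_m}$, estimated by splitting into near and far field exactly as in the proof of Theorem \ref{thm: lyapunov}, and it is controlled by a weighted Sobolev norm of $\f$ as in \eqref{eq: consL1k}.

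\textbf{Main obstacle.} The delicate point is the low-frequency region in Step 3. Bounding $\int_{|\xi|\le1}|\xi|^{2a}|\hat\f|^2$ with $a<\eta$ by the homogeneous norm $\|\f\|_{\dot H^\eta}$ alone is not legitimate. My first attempt is to use $|\hat\f|\le\|\f\|_{L^1}$, which holds because $\f\in L^1_M$. Then $\int_{|\xi|\le1}|\xi|^{2a}|\hat\f|^2\le C\|\f\|_{L^1}^2$, since $2a>-d$. Consequently the right-hand side will honestly involve $\|\f\|_{\dot H^\eta}+\|\f\|_{L^1}$, which is harmless for the later applications to solutions and equilibria. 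A second, related difficulty is tracking the $\log$ factors uniformly in $s$ and $\e$; the splitting $|\log(\e|\xi|)|\le|\log\e|+|\log|\xi||$ is what isolates the $\e^\delta|\log\e|$ factor in the statement.
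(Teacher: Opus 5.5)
Your argument is correct and is essentially the paper's proof: the multiplier of $\mathcal{A}^s_\e-\mathcal{A}^1_\e$, an add-and-subtract with Hypothesis~\ref{ass: remainderest} on one piece and $|1-|\xi|^{2(1-s)}|\le 2(1-s)|\log|\xi||(1+|\xi|^{2(1-s)})$ on the other, Plancherel, then interpolation (the only cosmetic difference is that you insert $|\xi|^{2s}\frac{\hat J^1(\e\xi)-1}{|\e\xi|^2}$ where the paper inserts $|\xi|^{2}\frac{\hat J^s(\e\xi)-1}{|\e\xi|^{2s}}$). Your low-frequency caveat is a genuine correction rather than a detour: near $\xi=0$ the multiplier behaves like $|\xi|^2-|\xi|^{2s}$, which is much larger than the paper's claimed bound $C(1-s)|\xi|^{2+\delta}|\log|\xi||\,[\cdots]$, so an extra $\|\f\|_{L^1}$ (or inhomogeneous $H^\eta$) term really is needed in the $L^2$ estimate; likewise the $L^1_k$ estimate needs a factor $\|\f\|^{1-\theta}$ of a weighted $W^{2,1}$-type norm as in \eqref{eq: consL1k}, which (in your argument as in the paper's) is an extra assumption on $\f$ and does not follow from $\f\in\dot H^\eta\cap L^1_M$.
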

    \begin{proof}    
    	Proceeding similarly to Proposition \ref{prop: consistencyEPS}, 
    	we have 
    	$$
   		|R^s(\xi)|\le C_1|\xi|^{2s+\delta}\qquad|R^1(\xi)|\le C_2|\xi|^{2+\delta}$$
   		Again, applying Plancherel's theorem
   		  \begin{equation}\label{eq: plancherelS}
   			\norm{\mathcal{A}^s_\e\f-\mathcal{A}^1_\e\f}_{L^2}=(2\pi)^{-d}\norm{\mathcal{F}[\mathcal{A}^s_\e\f-\mathcal{A}^1_\e\f]}_{L^2}=(2\pi)^{-d}\Big(\int|\mathfrak{m(\xi)}_\e|^2|\hat{\f}(\xi)|^2\d \xi\Big)^{1/2}
   		\end{equation}
   		 where $\mathfrak{m}_\e(\xi)$ is the  Fourier multiplier of $A^s_\e-\mathcal{A}^1_\e $ defined as  $$\mathfrak{m}_\e(\xi)=\frac{1}{\e^{2s}}(\hat{J^s}(\e\xi)-1)-\frac{1}{\e^2}(\hat J(\e\xi)-1)$$  
   	Then 
   		\begin{multline*}
   			|\mathfrak{m}_\e(\xi)|
   			=\abs{|\xi|^{2s}\frac{\hat{J^s}(\e\xi)-1}{|\e\xi|^{2s}}-|\xi|^{2}\frac{\hat{J}(\e\xi)-1}{|\e\xi|^{2}}}
   			\\
   			\le\abs{|\xi|^{2s}-|\xi|^2}\abs{\frac{\hat{J^s}(\e\xi)-1}{|\xi|^{2s}}}+|\xi|^{2}\abs{\frac{\hat{J^s}(\e\xi)-1}{|\e\xi|^{2s}}-\frac{\hat{J}(\e\xi)-1}{|\e\xi|^{2}}}
   			\\
   			\le |\xi|^{2s}\abs{1-|\xi|^{2(1-s)}}\Big(1+\frac{|R^s(\xi)|}{|\xi|^{2s}}\Big)+\mathfrak{C}|\xi|^{2}(1-s)|\e\xi|^{\delta}\abs{\log(\e|\xi|)}
   			\\
   			\le 2|\xi|^{2s}\abs{\log(|\xi|)}(1+|\xi|^{2(1-s)})(1+C_1|\xi|^{\delta})+\mathfrak{C}|\xi|^{2}(1-s)|\e\xi|^{\delta}\big(\abs{\log(|\xi|)}+\abs{\log(|\e|)}\big)
   			\\
   			\le C(1-s)|\xi|^{2+\delta}\abs{\log(|\xi|)}[1+\tilde{C}\e^{\delta}\abs{\log(|\e|)}]
   		\end{multline*}
   		Thus,
   		 \begin{multline*}
   			\norm{\mathcal{A}^s_\e\f-\mathcal{A}^1_\e\f}_{L^2}\le (2\pi)^{-d}\Big(\int\abs{C(1-s)|\xi|^{2+\delta}\abs{\log(|\xi|)}[1+\tilde{C}\e^{\delta}\abs{\log(|\e|)}]}^2|\hat{\f}(\xi)|^2\d \xi\Big)^{1/2}
   			\\
   			\le C(1-s)|\xi|^{2+\delta}[1+\tilde{C}\e^{\delta}\abs{\log(|\e|)}]\Big(\int |\xi|^{2+\delta}\abs{\log(|\xi|)}|\hat{\f}(\xi)|^2\d \xi\Big)^{1/2}
   			\\
   			\lesssim (1-s)\norm{f}_{\dot{H}^\eta}[1+\e^\delta|\log(|\e|)|].
   		\end{multline*}
   		
    \end{proof}
    
    Due to the lack of regularisation effects,  at short time we must assume additional regularity of the initial data,

\begin{teorema}\label{thm: convfinitetimeS}
	Let $(J^s)$ be a family of kernels satisfying Hypotheses \ref{ass: sunif} and \ref{ass: remainderest}. Let  $u_0\in L^{1}_M\cap H^{\eta}$  with $1\ge M>m>k$ and $\eta>2+\delta$, and let $u_\e^s$ and $u_\e$ be solutions of \eqref{DFFP} and \eqref{eq: NLFP} respectively,  with the same initial data $u_0$.
	Then for all $t\in[0,T^*],$ there exists $C=C(d,k,M,m, u_0)$
	\begin{equation*}
		\norm{u_\e^s(t)-u_\e(t)}_{\aux}\le C(1-s)^\theta(1+\e^{\delta\theta}\abs{\log|\e|})
	\end{equation*}
\end{teorema}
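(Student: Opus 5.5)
The argument copies the proof of Theorem \ref{thm: convfinitetimeEPS}: a consistency estimate feeds into the stability estimate. The limiting equation \eqref{eq: NLFP} has no smoothing, so the $H^\eta$ regularity of $u_\e=u_\e^1$ cannot come from a regularisation effect as it did for the fractional Fokker--Planck solution. It has to be propagated from $u_0$.

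\textbf{Step 1: regularity propagation for $u_\e$.} We treat $u_\e$ as the reference solution and show that, for $t\in[0,T^*]$, $\norm{u_\e(t)}_{\dot H^\eta}$ is bounded uniformly in $\e$. From the Fourier representation \eqref{eq: solfourier-fractional} with $s=1$,
$$\hat u_\e(t,\xi)=\hat u_0(e^{-t}\xi)\exp\Big(\frac{1}{\e^{2}}\int_0^t \zeta_\e(e^{-\tau}\xi)\d\tau\Big),$$
and since $\mathrm{Re}\,\zeta_\e\le 0$ the exponential factor has modulus at most $1$. Hence $|\hat u_\e(t,\xi)|\le|\hat u_0(e^{-t}\xi)|$, and the change of variables $\xi=e^{t}\eta$ gives
$$\norm{u_\e(t)}_{\dot H^\eta}\le e^{t(\eta+d/2)}\norm{u_0}_{\dot H^\eta}\le C(T^*)\norm{u_0}_{\dot H^\eta}.$$
We also need $\norm{u_\e(t)}_{L^1_M}$ bounded uniformly. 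This follows from the Lyapunov estimate of Theorem \ref{thm: semigrouplyap} with $k$ replaced by $M\le 1<2$, whose constants do not depend on $\e$. With the $L^1_M$ bound we can then pass from $L^2$ to $L^1_k$ by the interpolation lemma. The $W^{\eta,1}_m$-type norm appearing in Proposition \ref{prop: consistencyS} should be handled the same way, or replaced by the plain $L^1_m$ norm of $(\mathcal A^s_\e-\mathcal A^1_\e)u_\e$, which is bounded by $C\norm{u_\e}_{L^1_m}$ using the uniform moment bounds on $J^s$ of order $m<1$.

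\textbf{Step 2: perturbation and stability.} We write
$$\partial_t u_\e = L^s_\e u_\e + h(t),\qquad h(t):=(\mathcal A^1_\e-\mathcal A^s_\e)u_\e(t).$$
Proposition \ref{prop: stabilityEPS}, whose constants come from the Lyapunov condition and are therefore uniform in $s$ and $\e$, gives
$$\norm{u^s_\e(t)-u_\e(t)}_{L^1_k}\le \frac{2C}{\lambda}\int_0^t\norm{h(\tau)}_{L^1_k}\d\tau .$$

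\textbf{Step 3: consistency.} Proposition \ref{prop: consistencyS}, applied with $\f=u_\e(\tau)$ and combined with the bounds of Step 1, yields
$$\norm{h(\tau)}_{L^1_k}\le C(1-s)^\theta\big(1+\e^\delta|\log\e|\big)^\theta\le C(1-s)^\theta\big(1+\e^{\theta\delta}|\log\e|\big).$$
The last inequality uses $(a+b)^\theta\le a^\theta+b^\theta$ together with $|\log\e|^\theta\le 1+|\log\e|$. Integrating over $[0,T^*]$ gives the claim, with the factor $T^*$ absorbed into $C$.

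\textbf{Main obstacle.} The delicate point is Step 1. We need uniform-in-$\e$ bounds on $u_\e(t)$ simultaneously in $\dot H^\eta$ and in the weighted $L^1$ norm used by the interpolation, with no regularising effect to help. The Fourier contraction bound above settles the $\dot H^\eta$ part. For the weighted part, the plan is to avoid derivatives entirely and work only with $L^1_m$ moments.
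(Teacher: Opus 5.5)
Your Step 1 bound on $\norm{u_\e(t)}_{\dot H^\eta}$ is correct. It is also better than the paper's proof at that point. The paper uses the same architecture (Proposition \ref{prop: stabilityEPS} in $\aux$, then Proposition \ref{prop: consistencyS}, then interpolation), but it takes the regularity of $u_\e$ from the smoothing of \eqref{eq: FFP} in Theorem \ref{thm: propFFP}, which says nothing about \eqref{eq: NLFP}.

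\textbf{The gap} is the weighted factor in $\norm{h}_{\aux}\le\norm{h}_{L^2}^\theta\norm{h}_{L^1_m}^{1-\theta}$. You need $\norm{h(\tau)}_{L^1_m}$ bounded uniformly in $\e$. Your bound $\norm{(\mathcal A^s_\e-\mathcal A^1_\e)u_\e}_{L^1_m}\le C\norm{u_\e}_{L^1_m}$ via moments of $J^s$ is not uniform. The moments control $J^s_\e*u_\e$, but not the prefactor; what you actually get is
\[
\norm{\mathcal A^s_\e\f}_{L^1_m}\le \e^{-2s}\big(1+2^m\norm{J^s}_{L^1_m}\big)\norm{\f}_{L^1_m}.
\]
No $\e$-independent bound in terms of $\norm{\f}_{L^1_m}$ alone can hold: test on $\f$ oscillating at frequency $1/\e$. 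An $\e$-uniform weighted bound on $\mathcal A_\e\f$ needs weighted $L^1$ control of about $2s$ derivatives of $\f$. Neither $u_0\in H^\eta\cap L^1_M$ nor your Step 1 supplies this, and \eqref{eq: NLFP} does not create it. With the honest bound, interpolation leaves an extra factor $\e^{-2(1-\theta)}$, and the claimed estimate with $C$ independent of $\e$ is lost. The paper needs the same ingredient and simply asserts it (``by interpolation $f\in W^{\eta,1}_m$''), but that also does not follow from the hypotheses.

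\textbf{A fix within your framework} is to apply the $L^1_m$ moments to the solutions rather than to $h$, i.e.\ interpolate the difference $u^s_\e-u_\e$. The Fourier argument of Step 1 works for the semigroup of \eqref{DFFP} on any datum: by \eqref{eq: solfourier-fractional}, $\norm{e^{tL^s_\e}g}_{L^2}\le e^{td/2}\norm{g}_{L^2}$. Duhamel then gives, for $t\in[0,T^*]$,
\[
\norm{u^s_\e(t)-u_\e(t)}_{L^2}\le\int_0^t e^{(t-\tau)d/2}\norm{h(\tau)}_{L^2}\d\tau\le C(T^*)(1-s)\big(1+\e^{\delta}|\log\e|\big)\norm{u_0}_{H^\eta}.
\]
This uses the $L^2$ part of Proposition \ref{prop: consistencyS} with the full $H^\eta$ norm, because at low frequencies the multiplier has size $(1-s)|\xi|^{2s}|\log|\xi||$. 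Meanwhile $\norm{u^s_\e(t)-u_\e(t)}_{L^1_m}\le\norm{u^s_\e(t)}_{L^1_m}+\norm{u_\e(t)}_{L^1_m}$ is bounded uniformly in $\e$ and $s$ by the Lyapunov estimates. The interpolation lemma then gives the statement, and no weighted derivative bounds are needed.

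\textbf{Alternative:} keep the $\aux$ stability route, but add the hypothesis $u_0\in W^{2,1}_m$ and propagate it. Since $\partial_t\partial_j u_\e=L_\e\partial_j u_\e+\partial_j u_\e$, Kato's inequality and the Lyapunov bound control $\norm{\partial^\alpha u_\e(t)}_{L^1_m}$ on $[0,T^*]$ uniformly in $\e$.
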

\begin{proof}
 First notice that by interpolation $f\in W^{\eta,1}_m$.
	By Theorem \ref{thm: propFFP} $(i)$, the solution of \eqref{eq: FFP} with initial data $u_0\in L^2$ is immediately $H^{2s+\delta}$. Hence $u_\e$ satisfies the  hypotheses of Proposition \ref{prop: consistencyEPS} uniformly in $[0,T^*]$.
	Rewriting
	$$L_0^s u_\e=L_\e^s u_\e+(L_\e u_\e-L_\e^s u_\e):=L_\e^s u_\e+h(t)$$
	Then applying first Proposition \ref{prop: stabilityEPS} and then Proposition \ref{prop: consistencyS}
	\begin{equation*}
		\norm{u_\e^s(t)-u_\e(t)}_\aux\le 2\frac{C_L}{\lambda_L}\int_0^t\norm{L_\e u_\e-L_\e^su_\e}_\aux\le C_1\frac{C_L}{\lambda_L} T^*(1-s)^\theta(1+\e^{\theta\delta}\abs{\log|\e|})
	\end{equation*}
\end{proof}
Similarly to the previous case, we show the convergence of the equilibria.

\begin{teorema}\label{thm: conveqS}
Let $(J^s)$ be a family of kernels satisfying Hypotheses \ref{ass: sunif} and \ref{ass: remainderest}, and let $F^s_\e$ and $F^s$ be the steady states of respectively \eqref{DFFP} and \eqref{eq: FFP}.
Then for all $\e\in\big(0,\sqrt{\dfrac{2}{4+2\delta+d}}\Big)$, $$
\norm{F_\e^s-F^\e}_{\aux}\le C(1-s)^\theta (1+\e^{\delta\theta}\abs{\log|\e|}).
$$
\end{teorema}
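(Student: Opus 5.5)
The plan is to follow the proof of Theorem \ref{thm: conveqEPS}, replacing the comparison between \eqref{DFFP} and \eqref{eq: FFP} by the comparison between \eqref{DFFP} with index $s$ and \eqref{eq: NLFP}. Here $F^\e$ is the equilibrium of \eqref{eq: NLFP}, i.e.\ the case $s=1$. By Theorem \ref{thm: main1}, applied with the family $(J^s)$ (and, by Hypothesis \ref{ass: remainderest}, also to $J^1$), the semigroup generated by $L_\e^s$ satisfies $\norm{S_t}\le Ke^{-\lambda t}$ on the mean-zero subspace $\mathcal Y=\{f\in L^1_k:\int f=0\}$, with $K,\lambda$ independent of $\e$ and $s$. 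The Feller--Miyadera--Phillips characterisation with $\gamma=0$ then gives $\norm{(L_\e^s)^{-1}}_{\mathcal Y}\le K/\lambda$. Since $F_\e^s-F^\e\in\mathcal Y$, $L_\e^sF_\e^s=0$ and $L_\e^1F^\e=0$, and the drift parts of the two operators coincide, I obtain
\begin{equation*}
\norm{F_\e^s-F^\e}_{L^1_k}\le \frac{K}{\lambda}\norm{L_\e^s F^\e}_{L^1_k}=\frac{K}{\lambda}\norm{(\mathcal A_\e^s-\mathcal A_\e^1)F^\e}_{L^1_k}.
\end{equation*}

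The next step is to apply Proposition \ref{prop: consistencyS} with $\f=F^\e$. This yields the bound $C(1-s)^\theta\norm{F^\e}^\theta_{\dot H^\eta}\norm{F^\e}^{1-\theta}_{W^{\eta,1}_m}[1+\e^\delta|\log\e|]^\theta$. Since $(1+a)^\theta\le 1+a^\theta$ and $|\log \e|^\theta\le 1+|\log\e|$ for small $\e$, this reduces to the stated form $(1+\e^{\delta\theta}|\log\e|)$. It then remains to bound the norms of $F^\e$ uniformly in $\e$ on the stated range.

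The moment part $\norm{F^\e}_{L^1_M}$ with $M\le 1$ is uniform in $\e$. This follows from the Lyapunov bound of Theorem \ref{thm: semigrouplyap} applied to the stationary solution, which gives $\int\ap{x}^M F^\e\le C_L/\lambda_L$.

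The main obstacle is the regularity $F^\e\in H^\eta$ with $\eta>2+\delta$, together with a uniform bound. Unlike the fractional Fokker--Planck equilibrium, $F^\e$ is only finitely smooth. Theorem \ref{thm: regequi} with $s=1$ gives $F^\e\in H^m$ exactly for $m<\e^{-2}-d/2$. The requirement $2+\delta<\e^{-2}-d/2$ is precisely $\e^2<2/(4+2\delta+d)$, which explains the threshold on $\e$. To make the bound uniform, I would use the Fourier formula $\hat F^\e(\xi)=\exp\bigl(\e^{-2}\int_0^{|\xi|}\zeta_\e(y)y^{-1}\d y\bigr)$ and split into two regions:
\begin{itemize}
\item for $|\xi|\lesssim 1/\e$, the expansion of $\hat J^1$ gives $|\hat F^\e|\le e^{-c|\xi|^2}$;
\item for $|\xi|\gtrsim 1/\e$, the argument of Theorem \ref{thm: regequi}(i)(a) gives decay like $(\e|\xi|)^{-(1-\delta')/\e^2}$.
\end{itemize}
This controls $\int|\xi|^{2\eta}|\hat F^\e|^2$ by a constant independent of $\e$ in the admissible range. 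Here the uniform bound $\sup_{|\xi|\ge r}|\hat J^1|\le\kappa<1$ from Lemma \ref{lemfou}(ii) handles the middle frequencies. The $W^{\eta,1}_m$ norm then follows by the same interpolation remark used in Proposition \ref{prop: consistencyS}. If full uniformity near the endpoint fails, I would restrict to $\e\le\e_{\max}$ strictly inside the interval, consistently with Theorem \ref{thm: main3}.
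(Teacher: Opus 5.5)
Your proposal is correct and follows the paper's proof. The steps coincide: invert $L^s_\e$ on the mean-zero subspace using the uniform exponential decay and Feller--Miyadera--Phillips, reduce to $\norm{(\mathcal A^s_\e-\mathcal A^1_\e)F^\e}_{L^1_k}$, and apply Proposition~\ref{prop: consistencyS} using the $H^\eta$ regularity of $F^\e$ from Theorem~\ref{thm: regequi}, which is exactly where the threshold on $\e$ comes from. Two things go beyond the paper: you get the moment bound from the Lyapunov estimate rather than citing exponential tails from \citep{canizotassi2024}, and you ask whether $\norm{F^\e}_{\dot H^\eta}$ is uniform in $\e$. Your fallback to $\e\le\e_{\max}$ is the right one, since for fixed $\eta>2+\delta$ Theorem~\ref{thm: regequi}(i)(b) gives $F^\e\notin H^\eta$ as soon as $\e^{-2}-d/2<\eta$.
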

\begin{proof}
We have already proven in Theorem \ref{thm: conveqEPS} that $L_\e^s$ has an inverse in the Banach space $\mathcal{Y}:=\{f\in L^1_k: \int f=0\},$ with 
$\norm{(L_\e^s)^{-1}}_{L^1_k}\le K/\lambda$.
Since
$L_\e^sF_\e^s=L_\e F_\e=0$,
\begin{multline*}
	\norm{F_\e^s-F_\e}_{L^1_k}=\norm{L_\e^s (L_\e^s)^{-1}(F_\e^s-F_\e)}_{\mathcal{Y}}\le \norm{(L_\e^s)^{-1}}_{\mathcal{Y}}\norm{L_\e^s(F_\e^s-F_\e)}_{\mathcal{Y}}\\
	\le \frac{K}{\lambda}\norm{L_\e^s(F_\e^s-F_\e)}_{L^1_k}\le \frac{K}{\lambda}\norm{(L_\e^s-L_\e)F_\e}_{L^1_k}
\end{multline*}
From Theorem \ref{thm: regequi} $(i)\text{-}a$ and the fact that $\e\in\Big(0,\sqrt{\dfrac{2}{4+2\delta+d}}\Big)$, we have that $F_\e\in H^{\eta}$ with $\eta>2+\delta$.
Moreover, from \citep[Sec. 4]{canizotassi2024}, $F_\e$ has at least exponential tails, which implies that $F_\e\in L^1_M$ for all $M$.

We can then apply  \ref{prop: consistencyS}, and conclude
$$\norm{(L_\e^s-L_\e)F_\e}_{L^1_k}\le C(1-s)^\theta (1+\e^{\delta\theta}\abs{\log|\e|}).$$
\end{proof}Asymptotic relaxation ensures that the solution eventually 'forgets' its initial state. Given the regularity of the equilibrium for small $\e$, we can  apply the consistency estimate.
\begin{teorema}[Convergence for large times]\label{thm: convunifS}
	Let $(J^s)$ be a family of kernels satisfying Hypotheses \ref{ass: sunif}, and \ref{ass: remainderest} and let $u_\e^s$ be the solution of \eqref{DFFP} with initial data $u_0$ while denote by $u_\e$ the solution of the nonlocal Fokker-Planck \eqref{eq: NLFP} with initial data $\bar{u}_0$.
	Then for all $\e\in\big(0,\sqrt{\dfrac{2}{4+2\delta+d}}\Big)$ there exists $T^*>0$ such that for all $t\ge T^*$
	$$
	\norm{u^s_\e(t)-u_\e(t)}_\aux\le C (1-s)^\theta (1+\e^{\theta\delta}\abs{\log|\e|}).
	$$
\end{teorema}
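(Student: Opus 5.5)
The argument mirrors the proof of Theorem \ref{thm: convunifEPS}: we split the error through the two equilibria. With $F^s_\e$ the equilibrium of \eqref{DFFP} and $F_\e$ that of \eqref{eq: NLFP}, the triangle inequality gives
\begin{equation*}
\norm{u^s_\e(t)-u_\e(t)}_\aux\le \norm{u^s_\e(t)-F^s_\e}_\aux+\norm{F^s_\e-F_\e}_\aux+\norm{u_\e(t)-F_\e}_\aux .
\end{equation*}
For the first term we invoke Theorem \ref{thm: main1}, which gives $C_1e^{-\lambda_1 t}\norm{u_0-F^s_\e}_\aux$ with $C_1,\lambda_1$ independent of $\e$ and $s$. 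For the third term we use the corresponding Harris-type result for \eqref{eq: NLFP} from \citep{canizotassi2024}, giving $C_2e^{-\lambda_2 t}\norm{\bar u_0-F_\e}_\aux$ with constants uniform in $\e$. Alternatively, since $J^1$ satisfies \eqref{eq: expansion_uniform} with $s=1$, we can rerun the Lyapunov and Harris arguments of Section \ref{sec: asymptotic} directly. The middle term is controlled by Theorem \ref{thm: conveqS}, which gives $C(1-s)^\theta(1+\e^{\delta\theta}\abs{\log|\e|})$ precisely in the stated range $\e\in\big(0,\sqrt{2/(4+2\delta+d)}\big)$; this explains why the statement carries that restriction.

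Next we make the two exponential terms uniformly bounded. We need $\norm{u_0-F^s_\e}_\aux\le\norm{u_0}_\aux+\norm{F^s_\e}_\aux$, with $\norm{F^s_\e}_\aux$ bounded uniformly in $\e,s$. This follows from the semigroup Lyapunov condition (Theorem \ref{thm: semigrouplyap}): $F^s_\e$ is invariant, so letting $t\to\infty$ gives $\norm{F^s_\e}_\aux\le C_L/\lambda_L$. The same bound holds for $F_\e$.

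Finally we choose $T^*$ so that for $t\ge T^*$ the two exponential terms are at most $(1-s)^\theta(1+\e^{\theta\delta}\abs{\log|\e|})$. Since the right-hand side is bounded below by $(1-s)^\theta$, it suffices to take
\begin{equation*}
T^*\ge \frac{1}{\min\{\lambda_1,\lambda_2\}}\Big(\theta\abs{\log(1-s)}+\log\big(C(\norm{u_0}_\aux+\norm{\bar u_0}_\aux+C_L/\lambda_L)\big)\Big).
\end{equation*}
This is where the main subtlety lies: $T^*$ necessarily depends on $s$, only logarithmically in $(1-s)^{-1}$, which is consistent with the statement "there exists $T^*$". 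To obtain the uniform-in-time Theorem \ref{thm: main3}, we combine this with Theorem \ref{thm: convfinitetimeS} on $[0,T^*]$. There the linear factor $T^*$ only costs a $\abs{\log(1-s)}$, which can be absorbed by slightly reducing the exponent, or handled by running the stability estimate with the decaying semigroup bound on mean-zero data instead of the crude constant $2C/\lambda$.

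If $u_0\neq\bar u_0$, the bound is meant with the initial-data contributions absorbed into $C$. The intended case is $\bar u_0=u_0$, and then nothing changes.
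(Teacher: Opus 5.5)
Your proposal is correct and follows the paper's proof. It uses the same triangle inequality through $F^s_\e$ and $F_\e$, with Theorem \ref{thm: main1}, the Harris result of \citep{canizotassi2024} and Theorem \ref{thm: conveqS} controlling the three terms. Your explicit choice of $T^*$, backed by the uniform bound $\norm{F^s_\e}_\aux\le C_L/\lambda_L$ from the Lyapunov condition, simply makes precise the paper's ``for $t$ large enough''. It also correctly shows that $T^*$ grows like $\abs{\log(1-s)}$, a dependence the paper leaves implicit when it later combines this result with Theorem \ref{thm: convfinitetimeS}.
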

\begin{proof}Applying Theorem \ref{thm: main1}, the exponential convergence result of \citep[Thm 1]{canizotassi2024} and Theorem \ref{thm: conveqS}, we have
	\begin{multline*}
		\norm{u^s_\e(t)-u_\e(t)}_\aux\le \norm{u^s_\e(t)-F^s_\e}_\aux+\norm{u_\e-F_\e}_\aux+\norm{F^s_\e-F_\e}_\aux\\
		\le C_1e^{-\lambda_1 t}\norm{u_0-F_\e^s}_\aux+C_2 e^{-\lambda_2 t}\norm{\bar{u}_0-F_\e}_\aux+ \tilde{C}(1-s)^\theta (1+\e^{\theta\delta}\abs{\log|\e|})\\
		\le {C}(1-s)^\theta (1+\e^{\theta\delta}\abs{\log|\e|}),
	\end{multline*}
	for $t$ large enough.
\end{proof}
Combining Theorem \ref{thm: convunifS} and \ref{thm: convfinitetimeS}, we conclude the proof of Theorem \ref{thm: main3}.

\section*{Acknowledgments}
I would like to thank José A. Cañizo for suggesting the problem and for the  many helpful discussions.

The project that gave rise to these results received the support of a
fellowship from the ``la Caixa'' Foundation (ID 100010434). The
fellowship code is LCF/BQ/DI22/11940032. The author acknowledges
support from the ``Maria de Maeztu'' Excellence Unit IMAG, reference
CEX2020-001105-M, funded by MCIN/AEI/10.13039/501100011033/. He was
also supported by grant PID2023-151625NB-I00 and the research network
RED2022-134784-T from MCIN/AEI/10.13039/501100011033/.
\addcontentsline{toc}{section}{References}
\bibliography{bibliography}

@Article{Rey2013,
  author               = {Rey, Thomas and Toscani, Giuseppe},
  title                = {{Large-Time} Behavior of the Solutions to {Rosenau-Type} Approximations to the Heat Equation},
  doi                  = {10.1137/120876290},
  issn                 = {0036-1399},
  number               = {4},
  pages                = {1416--1438},
  volume               = {73},
  journal              = {SIAM Journal on Applied Mathematics},
  month                = jan,
  year                 = {2013},
}

@Book{AndreuVaillo2010,
  author               = {Andreu-Vaillo, Fuensanta and Maz\'{o}n, José M. and Rossi, Julio D. and Toledo-Melero, Juli\'{a}n J.},
  title                = {Nonlocal diffusion problems},
  isbn                 = {9780821852309},
  publisher            = {American Mathematical Society ; Real Sociedad Matem\'{a}tica Española},
  url                  = {http://www.worldcat.org/isbn/9780821852309},
  year                 = {2010},
}

@InProceedings{Hairer2011,
  author               = {Hairer, Martin and Mattingly, Jonathan C.},
  booktitle            = {Seminar on Stochastic Analysis, Random Fields and Applications VI},
  title                = {Yet Another Look at {Harris}' Ergodic Theorem for {Markov} Chains},
  chapter              = {7},
  doi                  = {10.1007/978-3-0348-0021-1\_7},
  editor               = {Dalang, Robert and Dozzi, Marco and Russo, Francesco},
  eprint               = {0810.2777},
  isbn                 = {978-3-0348-0020-4},
  pages                = {109--117},
  publisher            = {Springer Basel},
  url                  = {http://dx.doi.org/10.1007/978-3-0348-0021-1\_7},
  address              = {Basel},
  archiveprefix        = {arXiv},
  day                  = {15},
  month                = oct,
  year                 = {2011},
}

@Book{Meyn2010,
  author               = {Meyn, Sean P. and Tweedie, Richard L.},
  title                = {Markov chains and stochastic stability},
  isbn                 = {9780521731829},
  publisher            = {Cambridge University Press},
  url                  = {http://www.worldcat.org/isbn/9780521731829},
  year                 = {2010},
}

@Article{Carrillo2007,
  author               = {Carrillo, J. A. and Toscani, G.},
  title                = {{Contractive probability metrics and asymptotic behavior of dissipative kinetic equations}},
  pages                = {75--198},
  volume               = {6},
  journal              = {Riv. Mat. Univ. Parma (7)},
  mrnumber             = {2355628 (2008i:82094)},
  year                 = {2007},
}

@Article{Gualdani2018,
  author               = {Gualdani, Maria P. and Mischler, Stéphane and Mouhot, Clément},
  title                = {{Factorization for non-symmetric operators and exponential H-theorem}},
  eprint               = {1006.5523},
  eprinttype           = {arxiv},
  note                 = {To appear},
  archiveprefix        = {arXiv},
  day                  = {29},
  journal              = {Mémoires de la Société Mathématique de France},
  year                 = {2018},
}

@Article{Canizo2018b,
  author    = {Jos{\'{e}} A. Cañizo and Jos{\'{e}} A. Carrillo and Manuel P{\'{a}}jaro},
  title     = {Exponential equilibration of genetic circuits using entropy methods},
  doi       = {10.1007/s00285-018-1277-z},
  number    = {1-2},
  pages     = {373--411},
  volume    = {78},
  journal   = {Journal of Mathematical Biology},
  month     = {aug},
  publisher = {Springer Science and Business Media {LLC}},
  year      = {2018},
}

@article{berry_1941,
	title = {{The} {Accuracy} of  the {Gaussian} {Approximation} to the {Sum} of {Independent} {Variates}},
	author = {Berry, Andrew C},
        journal ={Transactions of the american mathematical society},
        year = {1941},
	langid = {english},
}

@article{bentkus_2005,
author = {Bentkus, V.},
title = {A  {Lyapunov-type} {Bound} in $\textbf{R}^d$},
journal = {Theory of Probability \& Its Applications},
volume = {49},
number = {2},
pages = {311-323},
year = {2005},
doi = {10.1137/S0040585X97981123},

URL = { 
    
        https://doi.org/10.1137/S0040585X97981123
    
    

},
eprint = { 
    
        https://doi.org/10.1137/S0040585X97981123
    
    

}
}

@article{raic_2019,
	title = {A {M}ultivariate {B}erry-{E}sseen {T}heorem with explicit constants},
 journal ={Bernoulli},
	volume = {25},
        year = {2019},
	issn = {1350-7265},
	url = {http://arxiv.org/abs/1802.06475},
	doi = {10.3150/18-BEJ1072},
	number = {4},
	journaltitle = {Bernoulli},
	shortjournal = {Bernoulli},
	author = {Raič, Martin},
	urldate = {2023-03-13},
	date = {2019-11-01},
	eprinttype = {arxiv},
	eprint = {1802.06475 [math]},
}

@misc{canizo_harris-type_2021,
	title = {Harris-type results on geometric and subgeometric convergence to equilibrium for stochastic semigroups},
    year = {2021},
	url = {http://arxiv.org/abs/2110.09650},
	doi = { arXiv:2110.09650v1 },
	number = {{arXiv}:2110.09650},
	publisher = {{arXiv}},
	author = {Cañizo, José A. and Mischler, Stéphane},
	date = {2021-10-18},
	eprinttype = {arxiv},
	eprint = { arXiv:2110.09650v1 },
}

@article{EngelNagel2001,
author = {Engel, Klaus-Jochen and Nagel, Rainer},
year = {2001},
month = {06},
pages = {278-280},
title = {One-Parameter Semigroups for Linear Evolution Equations},
volume = {63},
journal = {Semigroup Forum},
doi = {10.1007/s002330010042}
}

@article{mischler_kacs_2014,
	title = {On {Kac}'s chaos and related problems},
	volume = {266},
	issn = {0022-1236},
	url = {https://www.sciencedirect.com/science/article/pii/S0022123614000925},
	doi = {10.1016/j.jfa.2014.02.030},
	language = {en},
	number = {10},
	urldate = {2023-07-11},
	journal = {Journal of Functional Analysis},
	author = {Hauray, Maxime and Mischler, Stéphane},
	month = may,
	year = {2014},
	pages = {6055--6157},
}

@article{goudon_junca_toscani_BE_2002,
	title = {Fourier-{Based} {Distances} and {Berry}-{Esseen} {Like} {Inequalities} for {Smooth} {Densities}},
	volume = {135},
	issn = {1436-5081},
	url = {https://doi.org/10.1007/s006050200010},
	doi = {10.1007/s006050200010},
	language = {en},
	number = {2},
	urldate = {2023-03-16},
	journal = {Monatshefte für Mathematik},
	author = {Goudon, Thierry and Junca, Stéphane and Toscani, Giuseppe},
	month = mar,
	year = {2002},
	pages = {115--136},
}

@article{carlen_entropy_2010,
	title = {Entropy and chaos in the {Kac} model},
	volume = {3},
	issn = {1937-5077},
	doi = {10.3934/krm.2010.3.85},
	language = {English},
	number = {1},
	journal = {Kinetic and Related Models},
	author = {Carlen, E.A. and Carvalho, M.C. and Le Roux, J. and Loss, M. and Villani, C.},
	year = {2010},
	pages = {85--122},
}

@article{esseen1942liapunov,
  title={On the Liapunov limit error in the theory of probability},
  author={Esseen, Carl-Gustav},
  journal={Ark. Mat. Astr. Fys.},
  volume={28},
  pages={1--19},
  year={1942}
}

@article{lions1995strengthened,
  title={A strengthened central limit theorem for smooth densities},
  author={Lions, Pierre Louis and Toscani, Giuseppe},
  journal={Journal of Functional Analysis},
  volume={129},
  number={1},
  pages={148--167},
  year={1995},
  publisher={Elsevier}
}

@article{ignat2007nonlocal,
  title={A nonlocal convection--diffusion equation},
  author={Ignat, Liviu I and Rossi, Julio D},
  journal={Journal of Functional Analysis},
  volume={251},
  number={2},
  pages={399--437},
  year={2007},
  publisher={Elsevier}
}

@article{auricchio2023,
title = {Trends to equilibrium for a nonlocal Fokker–Planck equation},
journal = {Applied Mathematics Letters},
volume = {145},
pages = {108746},
year = {2023},
issn = {0893-9659},
doi = {https://doi.org/10.1016/j.aml.2023.108746},
url = {https://www.sciencedirect.com/science/article/pii/S0893965923001787},
author = {Ferdinando Auricchio and Giuseppe Toscani and Mattia Zanella}
}

@article{cai2006stochastic,
  title={Stochastic protein expression in individual cells at the single molecule level},
  author={Cai, Long and Friedman, Nir and Xie, X Sunney},
  journal={Nature},
  volume={440},
  number={7082},
  pages={358--362},
  year={2006},
  publisher={Nature Publishing Group UK London}
}

@misc{ayi_structure-preserving_2022,
	title = {On a structure-preserving numerical method for fractional {Fokker}-{Planck} equations},
	url = {http://arxiv.org/abs/2107.13416},
	doi = {10.48550/arXiv.2107.13416},
	urldate = {2023-12-12},
	publisher = {arXiv},
	author = {Ayi, Nathalie and Herda, Maxime and Hivert, Hélène and Tristani, Isabelle},
	month = jul,
	year = {2022},
	note = {arXiv:2107.13416 [cs, math]},
}

@article{canizo2020hypocoercivity,
  title={Hypocoercivity of linear kinetic equations via harris's theorem},
  author={Ca{\~n}izo, Jos{\'e} A and Cao, Chuqi and Evans, Josephine and Yolda{\c{s}}, Havva},
  journal={Kinetic and Related Models},
  volume={13},
  number={1},
  pages={97--128},
  year={2020},
  publisher={American Institute of Mathematical Sciences}
}

@article{lods2008relaxation,
  title={Relaxation rate, diffusion approximation and Fick's law for inelastic scattering Boltzmann models},
  author={Lods, Bertrand and Mouhot, Cl{\'e}ment and Toscani, Giuseppe},
  journal={Kinetic and Related Models},
  volume={1},
  number={2},
  pages={223--248},
  year={2008}
}

@article{bisi2015entropy,
  title={Entropy dissipation estimates for the linear Boltzmann operator},
  author={Bisi, Marzia and Canizo, Jos{\'e} A. and Lods, Bertrand},
  journal={Journal of Functional Analysis},
  volume={269},
  number={4},
  pages={1028--1069},
  year={2015},
  publisher={Elsevier}
}

@article{dolbeauthypo,
title = {Hypocoercivity for kinetic equations with linear relaxation terms},
journal = {Comptes Rendus Mathematique},
volume = {347},
number = {9},
pages = {511-516},
year = {2009},
issn = {1631-073X},
doi = {https://doi.org/10.1016/j.crma.2009.02.025},
url = {https://www.sciencedirect.com/science/article/pii/S1631073X09000880},
author = {Jean Dolbeault and Clément Mouhot and Christian Schmeiser}
}

@article{villanihypo,
 author = {Villani, C.}, title = {Hypocoercivity}, journal = {Memoirs of the American Mathematical Society}, year = {2009}, volume = {202}, issue = {950}, pages = {0-0}, doi = {10.1090/s0065-9266-09-00567-5} }

@article{di_nezza_hitchhikers_2012,
	title = {Hitchhiker's guide to the fractional {Sobolev} spaces},
	volume = {136},
	issn = {0007-4497},
	url = {https://www.sciencedirect.com/science/article/pii/S0007449711001254},
	doi = {10.1016/j.bulsci.2011.12.004},
	number = {5},
	urldate = {2023-11-15},
	journal = {Bulletin des Sciences Mathématiques},
	author = {Di Nezza, Eleonora and Palatucci, Giampiero and Valdinoci, Enrico},
	month = jul,
	year = {2012},
	pages = {521--573},
}

@article{bucurvaldinoci2016
, author = {Bucur, C. and Valdinoci, E.}, title = {Nonlocal diffusion and applications}, journal = {Lecture Notes of the Unione Matematica Italiana}, year = {2016}, doi = {10.1007/978-3-319-28739-3} }

@article{caputo1967linear,
  title={Linear models of dissipation whose Q is almost frequency independent—II},
  author={Caputo, Michele},
  journal={Geophysical journal international},
  volume={13},
  number={5},
  pages={529--539},
  year={1967},
  publisher={Blackwell Publishing Ltd Oxford, UK}
}

@book{oberhettinger2013tabellen,
  title={Tabellen zur Fourier transformation},
  author={Oberhettinger, Fritz},
  volume={90},
  year={2013},
  publisher={Springer-Verlag}
}

@article{servadei2014spectrum,
  title={On the spectrum of two different fractional operators},
  author={Servadei, Raffaella and Valdinoci, Enrico},
  journal={Proceedings of the Royal Society of Edinburgh Section A: Mathematics},
  volume={144},
  number={4},
  pages={831--855},
  year={2014},
  publisher={Royal Society of Edinburgh Scotland Foundation}
}

@article{canizotassi2024,
title = {A uniform-in-time nonlocal approximation of the standard {F}okker-{P}lanck equation},
journal = {Discrete and Continuous Dynamical Systems},
volume = {46},
number = {0},
pages = {348-386},
year = {2026},
issn = {1078-0947},
doi = {10.3934/dcds.2025103},
url = {https://www.aimsciences.org/article/id/6867a988265e5f564963593a},
author = {José A. Cañizo and Niccolò Tassi}
}

@article{tristani2013fractional,
  title={Fractional fokker-planck equation},
  author={Tristani, Isabelle},
  journal={Commun. Math. Sci. 13, 5 , 1243--1260.},
  year={2015},
}

@inproceedings{mischler_uniform_2017,
	title = {Uniform semigroup spectral analysis of the discrete, fractional and classical {Fokker}-{Planck} equations},
	volume = {4},
	url = {https://jep.centre-mersenne.org/articles/10.5802/jep.46/},
	doi = {10.5802/jep.46},
	language = {en},
	urldate = {2023-07-03},
	booktitle = {Journal de l’École polytechnique — {Mathématiques}},
	author = {Mischler, Stéphane and Tristani, Isabelle},
	month = mar,
	year = {2017},
	note = {ISSN: 2270-518X},
	pages = {389--433},
}

@article{lafleche_fractional_2020,
author = {Laflèche, L.},
title = {Fractional fokker--planck equation with general confinement force},
journal = {SIAM Journal on Mathematical Analysis},
year = {2020},
volume = {52},
issue = {1},
pages = {164-196},
doi = {10.1137/18m1188331}
}

@article{gentil_levy-fokker-planck_2006,
	title = {The {Lévy}-{Fokker}-{Planck} equation: $\Phi$-entropies and convergence to equilibrium},
	volume = {59},
	shorttitle = {The {Lévy}-{Fokker}-{Planck} equation},
	doi = {10.3223/ASY-2008-0887},
	journal = {Asymptotic Analysis},
	author = {Gentil, Ivan and Imbert, Cyril},
	month = dec,
	year = {2006},
}

@article{nolan_stable_2020, 
author = {Nolan, J. P.},
title = {Univariate stable distributions},
journal = {Springer Series in Operations Research and Financial Engineering},
year = {2020},
doi = {10.1007/978-3-030-52915-4} }

@article{BourgainBrezisMironescu,
  title={Another look at Sobolev spaces},
  author={Bourgain, Jean and Brezis, Haim and Mironescu, Petru},
  year={2001},
journal={HAL-archive}
}

@article{cances2020large,
  title={Large time behavior of nonlinear finite volume schemes for convection-diffusion equations},
  author={Canc{\`e}s, Cl{\'e}ment and Chainais-Hillairet, Claire and Herda, Maxime and Krell, Stella},
  journal={SIAM Journal on Numerical Analysis},
  volume={58},
  number={5},
  pages={2544--2571},
  year={2020},
  publisher={SIAM}
}

@article{dujardin2020coercivity,
  title={Coercivity, hypocoercivity, exponential time decay and simulations for discrete Fokker--Planck equations},
  author={Dujardin, Guillaume and H{\'e}rau, Fr{\'e}d{\'e}ric and Lafitte, Pauline},
  journal={Numerische Mathematik},
  volume={144},
  number={3},
  pages={615--697},
  year={2020},
  publisher={Springer}
}

@article{bessemoulin2020hypocoercivity,
  title={Hypocoercivity and diffusion limit of a finite volume scheme for linear kinetic equations},
  author={Bessemoulin-Chatard, Marianne and Herda, Maxime and Rey, Thomas},
  journal={Mathematics of Computation},
  volume={89},
  number={323},
  pages={1093--1133},
  year={2020}
}

@article{bilerKarch_2003_generalizedFP,
  title={Generalized Fokker-Planck equations and convergence to their equilibria},
  author={Biler, Piotr and Karch, Grzegorz},
  journal={Banach Center Publications},
  volume={60},
  pages={307--318},
  year={2003},
  publisher={Polish Academy of Sciences}
}

@article{vazquez2017asymptoticbehaviourfractionalheat,
author = {Juan Luis Vázquez},
title = {Asymptotic behaviour for the fractional heat equation in the Euclidean space},
journal = {Complex Variables and Elliptic Equations},
volume = {63},
number = {7-8},
pages = {1216--1231},
year = {2018},
publisher = {Taylor \& Francis},
doi = {10.1080/17476933.2017.1393807},


URL = { 
    
        https://doi.org/10.1080/17476933.2017.1393807
    
    

},
eprint = { 
    
        https://doi.org/10.1080/17476933.2017.1393807
    
    

}

}

@Unpublished{hairer2010convergence,
  author = {Hairer, Martin},
  note   = {Unpublished lecture notes},
  title  = {Convergence of {Markov} processes},
  year   = {2010},
  url    = {https://www.hairer.org/notes/Convergence.pdf},
}

\end{document}